\definecolor{light-gray}{gray}{0.85}
\definecolor{dark-gray}{gray}{0.25}
\newcounter{x}
\newcounter{y}
\newcounter{z}
\newcommand\xaxis{210}
\newcommand\yaxis{-30}
\newcommand\zaxis{90}
\newcommand\topside[3]{
  \fill[fill=white, draw=black,shift={(\xaxis:#1)},shift={(\yaxis:#2)},
  shift={(\zaxis:#3)}] (0,0) -- (30:1) -- (0,1) --(150:1)--(0,0);
}
\newcommand\leftside[3]{
  \fill[fill=light-gray, draw=black,shift={(\xaxis:#1)},shift={(\yaxis:#2)},
  shift={(\zaxis:#3)}] (0,0) -- (0,-1) -- (210:1) --(150:1)--(0,0);
}
\newcommand\rightside[3]{
  \fill[fill=dark-gray, draw=black,shift={(\xaxis:#1)},shift={(\yaxis:#2)},
  shift={(\zaxis:#3)}] (0,0) -- (30:1) -- (-30:1) --(0,-1)--(0,0);
}
\newcommand\cube[3]{
  \topside{#1}{#2}{#3} \leftside{#1}{#2}{#3} \rightside{#1}{#2}{#3}
}
\newcommand\planepartition[1]{
 \setcounter{x}{-1}
  \foreach \a in {#1} {
    \addtocounter{x}{1}
    \setcounter{y}{-1}
    \foreach \b in \a {
      \addtocounter{y}{1}
      \setcounter{z}{-1}
      \foreach \c in {1,...,\b} {
        \addtocounter{z}{1}
        \cube{\value{x}}{\value{y}}{\value{z}}
      }
    }
  }
}
\newcommand\qbin[3]{\left[\begin{matrix} #1 \\ #2 \end{matrix} \right]_{#3}}
\newcommand{\ten}{10}
\newcommand{\eleven}{11}
\newcommand{\twelve}{12}
\newcommand\Altbax[1]{\mathrm{AltBax}_{#1}}
\newcommand\Twin[1]{\mathrm{Twin}_{#1}}
\newcommand\bt[1]{\mathbb{BT}_{#1}}
\newcommand\cbt[1]{\mathbb{CBT}_{#1}}
\newcommand\code{\mathrm{code}}
\newtheorem{theorem}{Theorem}[section]
\newtheorem{corollary}[theorem]{Corollary}
\newtheorem{prop}[theorem]{Proposition}
\theoremstyle{definition}
\newtheorem{definition}[theorem]{Definition}
\newtheorem{example}[theorem]{Example}
\theoremstyle{remark}
\newtheorem{remark}[theorem]{Remark}
\begin{document}

\author{Kevin Dilks}

\title{Involutions on Baxter Objects}

\date{\today}

\maketitle

\begin{abstract}
Baxter numbers are known to count several families of combinatorial objects, all of which come equipped with natural involutions. In this paper, we add a combinatorial family to the list, and show that the known bijections between these objects respect these involutions. We also give a formula for the number of objects fixed under this involution, showing that it is an instance of Stembridge's ``$q=-1$ phenomenon''.
\end{abstract}

%\paragraph{R\'esum\'e.}
%Les nombres Baxter comptent plusieurs familles d'objets combinatoires, qui sont tous \'equip\'ees avec des involutions naturels. Dans ce papier, nous ajoutons une famille combinatoire \`a la liste, et nous montrons que les bijections connus entre ces objets respectent ces involutions. En plus, nous donnons une formule pour le nombre d'objets fix\'es par cette involution et nous montrons qu'elle est une instance du ``ph\'enom\`ene $q =-1$'' de Stembridge.

\section{Introduction}
The Baxter numbers are given by $B(n):=\sum_{k=0}^{n-1} \Theta_{k,n-k-1}$ where 
\begin{equation}
\label{baxternumber}
\Theta_{k,\ell}=\frac{\binom{n+1}{k}\binom{n+1}{k+1}\binom{n+1}{k+2}}{\binom{n+1}{1}\binom{n+1}{2}}
\end{equation}
 for $n=k+\ell+1$. The summand $\Theta_{k,\ell}$ counts many things, defined below, and illustrated in the Appendix: 
\noindent
\begin{description}
 \item[(A)] Baxter permutations in $\mathfrak{S}_{n}$ with $k$ ascents and $\ell$ descents.~\cite{NumBax}
 \item[(B)] Baxter permutations in $\mathfrak{S}_{n}$ with $k$ inverse ascents and $\ell$ inverse descents.
 \item[(C)] Twisted Baxter permutations in $\mathfrak{S}_{n}$ with $k$ inverse ascents and $\ell$ inverse descents.~\cite{LR}
 \item[(D)] Non-intersecting lattice paths from  \begin{align} & A_1=(0,2)\text{, } A_2=(1,1)\text{, and } A_3=(2,0) \text{ to} \label{latticepoints} \\ & B_1=(k,\ell +2), B_2=(k+1,\ell +1)\text{, } B_3=(k+2,\ell)\text{,} \nonumber \end{align} which we will call \emph{($k$,$\ell$)-Baxter paths}.~\cite{DG2}
 \item[(E)] Standard Young tableaux of shape $3\times n$ with no consecutive entries in any row, and $k$ instances of $(i,i+1)$ in the union of the first and third rows, which we will call \emph{ ($k$,$\ell$)-Baxter tableaux}.~\cite{DG1}
 \item[(F)] Diagonal rectangulations of size $n$, where $k$ is the number of times the interior of the diagonal is intersected vertically, and $\ell$ is the number of times it is intersected horizontally.~\cite{LR}
 \item[(G)] Plane partitions in a $k\times \ell\times 3$ box, which we will call \emph{Baxter plane partitions}.
\end{description}

Recall that a permutation $w=w_1\ldots w_n$ has a descent at position $i$ if $w_i>w_{i+1}$. A permutation $w$ has an inverse descent at position $i$ if $w^{-1}$ has a descent as position $i$, which is equivalent to $i+1$ appearing to the left of $i$ in $w$.

Baxter permutations are those that avoid the patterns 3-14-2 and 2-41-3, where an occurrence of the pattern 3-14-2 in a permutation $w=w_1\ldots w_n$ means there exists a quadruple of indices $\{i,j,j+1,k\}$ with $i<j<j+1<k$ and $w_j< w_k< w_i< w_{j+1}$ (and similarly for 2-14-3)\footnote{Such patterns are sometimes called {\textit{vincular} patterns}.}. For example, 25314 contains an instance of the patten 2413, but not 2-41-3. For $n=4$, there are $B(4)= 22$ Baxter permutations in $\mathfrak{S}_4$, with the only excluded ones being 2413 and 3142. Twisted Baxter permutation have a syntactically similar definition, being those that avoid 2-41-3 and 3-41-2. Call these larger sets counted by $B(n)$ a set of \emph{Baxter objects of order $n$}, and their subsets counted by $\Theta_{k,\ell}$ a set of \emph{Baxter objects of order ($k$,$\ell$)}. Each of these subsets has a natural involution that preserves $k$ and $\ell$:

\begin{itemize}
 \item Conjugation by the longest permutation $w_0$ for (A), (B), and (C).
 \item Rotation by $180^{\circ}$ about a central point for (D) and (F)
 \item Sch\"utzenberger evacuation for (E), which in the special case of a rectangular tableaux with $N$ boxes corresponds to rotating the tableaux $180^{\circ}$ and then replacing every label $i$ with $N+1-i$.
 \item Taking the complement of a plane partition in the $k\times l\times 3$ box for (G).
\end{itemize}

Since Baxter permutations are closed under taking inverses~\cite{LR}, the map $w\mapsto w^{-1}$ provides an obvious bijection between Baxter objects (A) and (B). There are known bijections due to Dulucq and Guibert between the Baxter objects (A), (D) and (E) (see~\cite{DG1},\cite{DG2}), and also between the objects (B), (C) and (F) due to Law and Reading (see~\cite{LR}). We will also show the equivalence of objects (D) and (G). Section 2 is devoted to the proof of the following theorem:

\begin{theorem}
\label{Theorem 1}
 The given bijections between the above 7 classes of Baxter objects of order ($k$,$\ell$), commute with their respective involutions.
\end{theorem}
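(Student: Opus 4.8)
The plan is to reduce the theorem to a small number of commuting squares by exploiting the fact that the commutation property is preserved under composition. Concretely, if a bijection $\phi\colon X\to Y$ satisfies $\phi\circ\iota_X=\iota_Y\circ\phi$ and a bijection $\psi\colon Y\to Z$ satisfies $\psi\circ\iota_Y=\iota_Z\circ\psi$, where $\iota_X,\iota_Y,\iota_Z$ denote the respective involutions, then $\psi\circ\phi$ satisfies $(\psi\circ\phi)\circ\iota_X=\iota_Z\circ(\psi\circ\phi)$. Consequently it suffices to verify the commuting square for a set of bijections forming a spanning tree of the graph whose vertices are the classes (A)--(G) and whose edges are the given bijections; commutation along any remaining edge then follows automatically. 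A convenient spanning tree uses the edges $(A)$--$(B)$, $(A)$--$(D)$, $(A)$--$(E)$, $(B)$--$(C)$, $(B)$--$(F)$, and $(D)$--$(G)$.

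First I would dispatch the two easy edges. For $(A)$--$(B)$ the bijection is $w\mapsto w^{-1}$ and both involutions are conjugation by $w_0$; since $w_0^{-1}=w_0$, one computes $(w_0\,w\,w_0)^{-1}=w_0\,w^{-1}\,w_0$, so the square commutes immediately. For $(D)$--$(G)$, using the explicit bijection constructed in Section 2, I would check directly that rotating a $(k,\ell)$-Baxter path by $180^{\circ}$ corresponds to complementing the associated plane partition in the $k\times\ell\times 3$ box: both operations amount to the same reversal of all three coordinates, so this reduces to a short geometric verification.

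The substantive work lies in the Dulucq--Guibert edges $(A)$--$(D)$, $(A)$--$(E)$ and the Law--Reading edges $(B)$--$(C)$, $(B)$--$(F)$. For each I would recall the explicit algorithmic description of the map and track how conjugation by $w_0$ on the permutation side transforms the data on the other side. For $(A)$--$(D)$, I expect conjugation by $w_0$ to simultaneously reverse reading position and complement values, which is exactly the symmetry producing the $180^{\circ}$ rotation of the triple of non-intersecting lattice paths; for $(B)$--$(C)$ and $(B)$--$(F)$, I would similarly verify that $w_0$-conjugation of a twisted Baxter permutation matches the central $180^{\circ}$ rotation of the corresponding diagonal rectangulation, making sure the statistics $k$ and $\ell$ are tracked through each map.

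The main obstacle will be the tableau edge $(A)$--$(E)$, where I must show that conjugation by $w_0$ corresponds to Sch\"utzenberger evacuation on $(k,\ell)$-Baxter tableaux. This is the most delicate point, since evacuation is defined through iterated jeu-de-taquin rather than a transparent symmetry of the tableau, and its interaction with the no-consecutive-entries condition defining Baxter tableaux must be controlled. My strategy here is to invoke the classical compatibility, under the relevant insertion-type correspondence, between $w_0$-conjugation on permutations and evacuation on tableaux, and then to verify that this correspondence restricts correctly to the Baxter subfamily, checking in particular that the count $k$ of occurrences of $(i,i+1)$ in the union of the first and third rows is preserved under the operation.
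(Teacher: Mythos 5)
Your spanning-tree reduction, and your treatment of the edges (A)--(B) and (D)--(G), match the paper. But there is a genuine gap at the edge (A)--(E), which is exactly where the paper spends nearly all of its effort. The bijection from Baxter permutations to Baxter tableaux is \emph{not} an insertion correspondence: it is the Dulucq--Guibert composite $\mathrm{Bax}_n \leftrightarrow \widetilde{\Twin{n}} \hookrightarrow \cbt{2n+1}\times\cbt{2n+1} \leftrightarrow \mathrm{AltBax}_{2n} \xrightarrow{\beta} \mathrm{Shuffle}_{2n} \xrightarrow{f} Y_n^{(3)}(22)$, restricted to $Y_n^{(3)}(11,22,33)$. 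Consequently the ``classical compatibility, under the relevant insertion-type correspondence, between $w_0$-conjugation and evacuation'' that you propose to invoke does not exist for this map: Sch\"utzenberger's theorem concerns RSK tableaux, and the $3\times n$ Baxter tableaux are not RSK images of the permutations in question. You have also misdiagnosed where the delicacy lies: on rectangular tableaux evacuation is the transparent symmetry (rotate $180^\circ$ and relabel $i\mapsto N+1-i$; on Yamanouchi words, reverse and swap $1$'s and $3$'s), and the paper defines it exactly this way. The opaque object is instead the recursive bijection $\beta$. The paper's actual argument proceeds by observing that $\mathrm{Shuffle}_{2n}$ carries no natural involution, so one must prove that the involution $\alpha\mapsto\beta(w_0\beta^{-1}(\alpha)w_0)$ induced from $\mathrm{AltBax}_{2n}$ coincides with the involution $\alpha\mapsto f^{-1}(\mathrm{evac}(f(\alpha)))$ induced from $Y_n^{(3)}(22)$. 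This is carried out by encoding an element on either side as a word in eight ``types'' (recording the relative order of $p$ and $p+1$ and the emptiness of $\pi_-$, $\pi_+$, matched against which letters of $f(\alpha)$ are $1$'s or $3$'s and which are preceded by a $2$), and then verifying case-by-case that both induced involutions reverse the encoded word while swapping types $2\leftrightarrow 3$ and $6\leftrightarrow 7$. Your proposal contains no substitute for this identification of the two induced involutions, and without it the tableau edge does not close.

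A secondary, smaller issue is the edge (B)--(C): the Law--Reading correspondence between twisted Baxter and Baxter permutations is not given by a direct algorithm whose steps you can ``track,'' but by matching extreme elements of fibers of the lattice congruence $\Theta_{3412}$ on the weak order. The equivariance argument the paper uses is that each fiber is the orbit of its elements under 3-14-2 $\leftrightarrow$ 3-41-2 moves, so conjugation by $w_0$ permutes fibers (and preserves inverse descents, since swapping the entries playing the roles of 1 and 4 never changes the relative order of $i$ and $i+1$); your sketch would need this fiber-orbit observation to make the verification meaningful. Likewise, the edges (A)--(D) and (B)--(F) are proved in the paper through the intermediate object of twin binary trees (reading paths off internal nodes in infix order, and gluing trees into rectangulations), rather than by a direct positional/value symmetry on the permutation as you suggest, though for these two edges your outline could plausibly be completed along the paper's lines.
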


Since the bijections commute with the respective involutions, this means the number of Baxter objects of order ($k$,$\ell$) fixed under involution is the same for all 7 classes of Baxter objects. Denote this common number $\Theta_{k,\ell}^{\circlearrowleft}$, and introduce a q-analogue of $\Theta_{k,\ell}$, 

\begin{equation}
\label{defofqanalog}
\Theta_{k,\ell}(q) := \frac{\qbin{n+1}{k}{q}\qbin{n+1}{k+1}{q}\qbin{n+1}{k+2}{q}}{\qbin{n+1}{1}{q}\qbin{n+1}{2}{q}}
\end{equation} 

\noindent where $n=k+\ell +1$, $\qbin{n}{i}{q}=\frac{[n]!_q}{[k]!_q[n-k]!_q}$, $[m]!_q=[m]_q[m-1]_q\ldots[1]_q$, and $[j]_q=1+q+\ldots+q^{j-1}$.

\begin{theorem}
\label{Theorem 2}
 $\Theta_{k,\ell}(q)$ lies in $\mathbb{N}[q]$, has symmetric coefficients, and satisfies $[\Theta_{k,\ell}(q)]_{q=-1}=\Theta_{k,\ell}^{\circlearrowleft}$.
\end{theorem}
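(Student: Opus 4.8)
The plan is to interpret $\Theta_{k,\ell}(q)$ as a generating function for Baxter plane partitions (object (G)) graded by volume, and then to read off all three assertions from that single interpretation. For the first claim I would invoke MacMahon's box formula: the volume generating function of plane partitions in an $a\times b\times c$ box is
\[
\prod_{i=1}^{a}\prod_{j=1}^{b}\prod_{m=1}^{c}\frac{1-q^{\,i+j+m-1}}{1-q^{\,i+j+m-2}}.
\]
Specializing to $(a,b,c)=(k,\ell,3)$ and collecting the triple product into $q$-binomial coefficients, I would check that the resulting expression is exactly the defining formula for $\Theta_{k,\ell}(q)$ with $n=k+\ell+1$. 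Since any generating function of a finite set by a nonnegative integer statistic lies in $\mathbb{N}[q]$, this settles that $\Theta_{k,\ell}(q)\in\mathbb{N}[q]$. Equivalently, one could reach the same polynomial by applying the Lindstr\"om--Gessel--Viennot lemma to the three non-intersecting Baxter paths of (D), obtaining a $3\times 3$ determinant of $q$-binomials whose evaluation is $\Theta_{k,\ell}(q)$; I regard the two routes as interchangeable and would present whichever simplification is shorter.

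Symmetry of the coefficients then comes for free from the very involution used in Theorem~\ref{Theorem 1}. Complementation in the box sends a plane partition of volume $v$ to one of volume $3k\ell-v$, so it is a volume-reversing bijection of the $k\times\ell\times 3$ box onto itself. Hence $\Theta_{k,\ell}(q)=\sum_{\pi}q^{|\pi|}$ is palindromic of degree $3k\ell$; that is, the coefficient of $q^{i}$ equals that of $q^{3k\ell-i}$.

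The substantive claim is the $q=-1$ evaluation, the announced instance of Stembridge's ``$q=-1$ phenomenon.'' Because complementation is volume-reversing, its fixed points are exactly the \emph{self-complementary} Baxter plane partitions, and by Theorem~\ref{Theorem 1} these number $\Theta_{k,\ell}^{\circlearrowleft}$. For a self-contained argument I would evaluate the defining ratio of $q$-binomials at $q=-1$ using the classical specialization
\[
\qbin{m}{r}{-1}=\binom{\lfloor m/2\rfloor}{\lfloor r/2\rfloor},
\]
valid unless $m$ is even and $r$ is odd (in which case the value is $0$), first clearing the common cyclotomic factors of numerator and denominator. This is legitimate because we already know the quotient is a polynomial, so the limit $q\to -1$ is well defined even where individual factors vanish. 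The result is an explicit product/binomial value for $[\Theta_{k,\ell}(q)]_{q=-1}$, which I would then match against an independent enumeration of self-complementary plane partitions in the $k\times\ell\times 3$ box.

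The main obstacle is precisely this final matching, and it is genuinely case-dependent on the parities of $k$ and $\ell$. When $3k\ell$ is odd --- that is, $k$ and $\ell$ both odd --- the center of the box cannot be fixed, so there are no self-complementary plane partitions; correspondingly the odd-degree palindrome is divisible by $1+q$ and the substitution returns $0$, matching automatically. In the remaining parity cases the careful bookkeeping of the floors in the binomial specialization, together with a correct count of self-complementary plane partitions, is the crux. Alternatively one may simply cite Stembridge's theorem that the volume generating function of plane partitions in a box, evaluated at $-1$, enumerates the self-complementary ones, which closes the argument directly.
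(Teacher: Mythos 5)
Your proposal follows essentially the same route as the paper: identify $\Theta_{k,\ell}(q)$ with MacMahon's volume generating function $M(k,\ell,3;q)$ for plane partitions in a $k\times\ell\times 3$ box (giving membership in $\mathbb{N}[q]$ and symmetry), and then cite Stembridge's theorem that $M(a,b,c;-1)$ counts self-complementary plane partitions, which together with the equivariance from Theorem~\ref{Theorem 1} yields $[\Theta_{k,\ell}(q)]_{q=-1}=\Theta_{k,\ell}^{\circlearrowleft}$. The additional case-by-case $q$-binomial specialization you sketch is unnecessary overhead --- as you note yourself, Stembridge's result closes the argument directly, and that is exactly what the paper does.
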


The proof of Theorem \ref{Theorem 2} is given in Section \ref{Section 3}, using a result of Stembridge from the theory of plane partitions.

\section{Proof of Theorem~\ref{Theorem 1}}
\label{Section 2}

\subsection{Objects (D) and (G)}

A plane partition is an array $(\pi_{i,j})_{i,j\geq 1}$ of non-negative integers with finitely many non-zero entries that weakly decrease along rows and columns. The plane partitions inside an $a\times b\times c$ box are those where $\pi_{i,j}\leq c$, and $\pi_{i,j}=0$ if $i>a$ or $j>b$. Its complement in the $a\times b\times c$ box is the plane partition given by $\pi'_{i,j}=c-\pi_{a-i,b-j}$ for $1\leq i\leq a$ and $1\leq j\leq b$ and 0 elsewhere.
%One can readily compute this value via a determinantal formula due to Gessel and Viennot~\cite{GVL} to get the formula in Equation $\eqref{baxternumber}$ for $\Theta_{k,\ell}$. 

\begin{figure}[h]
\label{triptopp}

\begin{tikzpicture}[scale=.5]
 \draw[gray,very thin] (0,0) grid (6,6);
 \draw[very thick] (2,0) node[anchor=north] {(2,0)} -- (5,0) -- (5,2) -- (6,2) -- (6,4) node[anchor=west] {(6,4)};
 \draw[very thick] (1,1) node[anchor=north east] {(1,1)} -- (3,1) -- (3,3) -- (4,3) -- (4,4) -- (5,4) -- (5,5) node[anchor=south west] {(5,5)};
 \draw[very thick] (0,2) node[anchor=east] {(0,2)} -- (1,2) -- (1,4) -- (3,4) -- (3,6) -- (4,6) node[anchor=south] {(4,6)};
 \filldraw (0,2) circle (2pt)
           (1,1) circle (2pt)
           (2,0) circle (2pt)
           (4,6) circle (2pt)
           (5,5) circle (2pt)
           (6,4) circle (2pt);

 \begin{scope}[xshift=200mm,yshift=50mm]
  \planepartition{{3,3,3,2},{3,3,3,1},{3,2,1},{3,2,1}}
 \end{scope}
 
 \begin{scope}[xshift=120mm,yshift=80mm]
 \planepartition{{1,1,1},{1,1,1},{1},{1}}
 \end{scope}
 
   \begin{scope}[xshift=120mm,yshift=40mm]
 \planepartition{{1,1,1,1},{1,1,1},{1,1},{1,1}}
 \end{scope}
 
  \begin{scope}[xshift=120mm,yshift=0mm]
 \planepartition{{1,1,1,1},{1,1,1,1},{1,1,1},{1,1,1}}
 \end{scope}
 
\end{tikzpicture}
\caption{Example of the map from triples of non-intersecting lattice paths as in \eqref{latticepoints} to plane partitions in a $k\times\ell\times 3$ box, for $k=\ell=4$. (Tikz code courtesy of Jang Soo Kim)}
\end{figure}
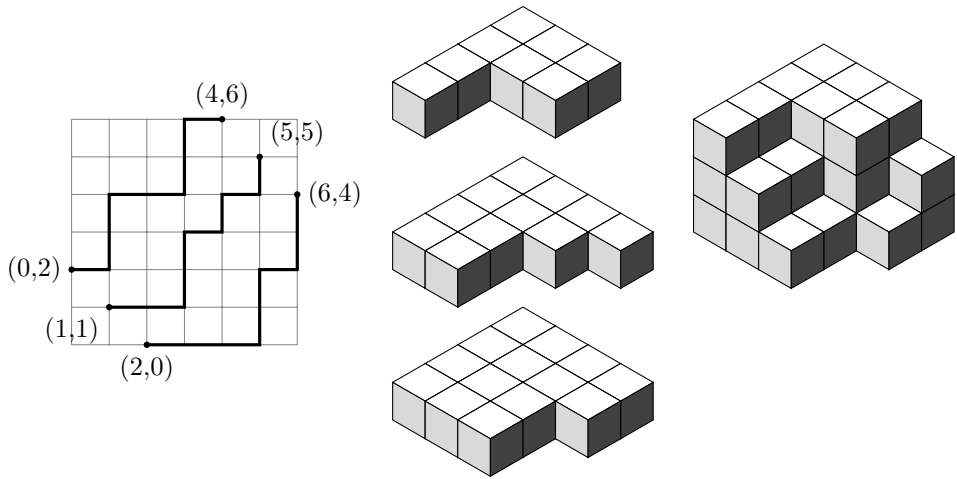

\begin{theorem}
 There is a bijection between $(k,\ell)$-Baxter paths and plane partitions in a $k \times \ell \times 3$ box, which equivariantly takes conjugation by $w_0$ to complementation of a plane partition.\footnote{Thanks to Jang Soo Kim for noting this connection.}
\end{theorem}

\begin{proof}
Each individual lattice path from $A_i$ to $B_i$ naturally corresponds to a partition $\lambda_i$ inside of a $k \times \ell$ box, (our convention will be to take $\lambda_i$ to be the part of the $k \times \ell$ box with $A_i$ and $B_i$ as corners that lies above the given lattice path). The non-intersecting condition is equivalent to requiring $\lambda_3\subseteq\lambda_2\subseteq\lambda_1$, which is precisely the condition necessary for a triple of partitions to form the layers of a plane partition when stacked. Additionally, one can see the involution on lattice paths (which is $180^{\circ}$ rotation) corresponds to taking $\lambda_3\subseteq\lambda_2\subseteq\lambda_1$ to $\lambda_1^c\subseteq\lambda_2^c\subseteq\lambda_3^c$, where $\lambda^c$ is the complement of $\lambda$ in the $k \times \ell$ box, which is the same as taking the complement of the plane partition in the $k \times \ell \times 3$ box.
\end{proof}

\newpage
\subsection{Objects (A), (D), and (E)}

One fundamental intermediate object in bijections between Baxter objects is a special sub-class of pairs of binary trees.

A \emph{binary tree} is a rooted plane tree where every node has at most two children, denoted the left and right child. A \emph{complete binary tree} is a binary tree where every node is either a leaf, or has has exactly two children. Let $\bt{n}$ denote the set of binary trees with $n$ nodes, and $\cbt{2n+1}$ the set of complete binary trees on $2n+1$ nodes.

If we truncate all of the leaves from a complete binary tree on $2n+1$ nodes, we're left with a binary tree on $n$ nodes. If we have a binary tree on $n$ nodes, we can extend it to a binary tree on $2n+1$ nodes by adding leaves to every node with 0 or 1 children. The processes of truncation and extension can clearly be seen to be inverse to each other.

\begin{definition}
Let $\mathrm{Trunc}: \cbt{2n+1}\mapsto\bt{n}$ be the bijection from complete binary trees on $2n+1$ nodes to binary trees on $n$ nodes obtained by truncating leaves, with inverse map called $\mathrm{Extend}$. Let $\mathrm{Trunc}^{\times 2}: \cbt{2n+1}\times\cbt{2n+1}\mapsto\bt{n}\times\bt{n}$ (resp. $\mathrm{Extend}^{\times 2}$) be the corresponding maps on pairs of trees. 
\end{definition}

We call a leaf a left (resp. right) leaf if it is a left (resp. right) child of its parent.

\begin{definition}
Let $\code:\cbt{2n+1}\mapsto \{0,1\}^{n-1}$ be the function that reads off the pattern of left and right leaves in a complete binary tree from left to right (excluding the left-most left leaf and right-most right leaf) by assigning a 1 to left leaves and a 0 to right leaves
\end{definition}

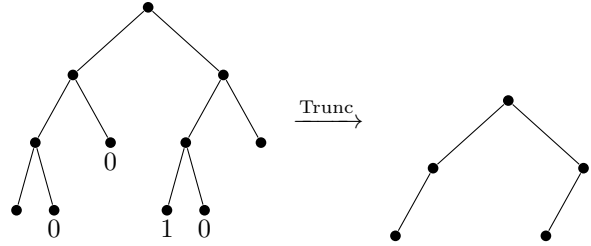
\begin{figure}[!h]
\centering
$\begin{array}{ccc}
\begin{tikzpicture} [inner sep=.5mm, level distance=9mm, level 4/.style={sibling distance=5mm}, level 3/.style={sibling distance=5mm}, level 2/.style={sibling distance=10mm}, level 1/.style={sibling distance=20mm}] \node[fill,circle] {} child { node[fill,circle]{} child { node[fill,circle]{} child {node[fill,circle]{}} child{node[fill,circle,label=below:0]{}}} child {node[fill,circle,label=below:0]{}} } child { node[fill,circle]{} child { node[fill,circle]{} child{node[fill,circle,label=below:1]{}} child{node[fill,circle,label=below:0]{}} } child{node[fill,circle]{}} }; \end{tikzpicture} & \raisebox{15mm}{$\xrightarrow{\mathrm{Trunc}}$} & \begin{tikzpicture} [inner sep=.5mm, level distance=9mm, level 4/.style={sibling distance=5mm}, level 3/.style={sibling distance=5mm}, level 2/.style={sibling distance=10mm}, level 1/.style={sibling distance=20mm}] \node[fill,circle] {} child { node[fill,circle]{} child{node[fill,circle]{}} child[missing]} child {node[fill,circle]{} child{node[fill,circle]{}} child[missing]}; \end{tikzpicture}
\end{array}$
\caption{Example of \textrm{Trunc} taking an element of $\cbt{11}$ with leaf code 0010 to an element of $\bt{5}$.}
\end{figure}

We will mainly be interested in pairs of complete binary trees satisfying a compatibility relation.

\begin{definition}
Let $\Twin{n}\subset\cbt{2n+1}\times\cbt{2n+1}$ be the set of pairs of complete binary trees, $\left(T_L,T_R\right)$, where $\code(T_L)$ is the same as $\code(T_R)$ if we interchange $0$'s and $1$'s. Let $\widetilde{\Twin{n}}$ be the image of $\Twin{n}$ under $\mathrm{Trunc}^{\times 2}$.
\end{definition}

\begin{figure}[!hbp]
\centering
$\begin{array}{c}
\raisebox{\height}{$\Bigg($}
\begin{tikzpicture}
[inner sep=.5mm, level distance=8mm, level 3/.style={sibling distance=4mm}, level 2/.style={sibling distance=7mm}] 
    \node[fill,circle] {}  	
      child{ node[fill,circle] (b) {}
	child{ node[fill,circle] (c) {}
	  child { node[fill,circle] (a) {} }
	  child { node[fill,circle] (alpha) {} }
	}
	child { node[fill,circle] (d) {} child { node[fill,circle] (beta) {}} child { node[fill,circle] (gamma) {}}}
      }
      child { node[fill,circle] (e) {} child {node[fill,circle] (delta) {}} child { node[fill,circle] (f) {} }
      };

\draw (1.75,-2) node {,};
\begin{scope}[xshift=35mm]
[inner sep=.5mm, level distance=8mm, level 3/.style={sibling distance=4mm}, level 2/.style={sibling distance=4mm}] 
    \node[fill,circle] {}  	
      child { node[fill,circle] (f) {}
	child { node[fill,circle] (g) {} }
	child{ node[fill,circle] (h) {} child { node[fill,circle] (Alpha) {}} child { node[fill,circle] (Beta) {}}}
      }
      child{ node[fill,circle] (i) {}
	child{ node[fill,circle] (j) {} child { node[fill,circle] (Gamma) {}} child { node[fill,circle] (Delta) {}}  }
	child { node[fill,circle] (k) {} }
	};
\end{scope}
\end{tikzpicture}
\raisebox{\height}{$\Bigg)$}\\
\downarrow \\
\raisebox{\height}{$\Bigg($}
\begin{tikzpicture}
[inner sep=.5mm, level distance=9mm, level 3/.style={sibling distance=5mm}, level 2/.style={sibling distance=10mm}] 
    \node[fill,circle] {}  	
      child {node[fill,circle] {}
	child {node[fill,circle] {}}
	child {node[fill,circle] {}}
      }
      child {node[fill,circle] {}
      };
\draw (1.25,-1.5) node {,};
\begin{scope}[xshift=25mm]
[inner sep=.5mm, level distance=9mm, level 3/.style={sibling distance=5mm}, level 2/.style={sibling distance=10mm},xshift=100cm] 
    \node[fill,circle] {}  	
      child {node[fill,circle] {}
	child[missing] {node[fill,circle] {m1}}
	child {node[fill,circle] {}}
      }
      child{node[fill,circle] {}
	child{node[fill,circle] {}}
	child[missing] {node[fill,circle] {m2}}
	};
\end{scope}
\end{tikzpicture}
\raisebox{\height}{$\Bigg)$}\\
\end{array}$

\caption{Example of map from $\Twin{n}$ to $\widetilde{\Twin{n}}$ for $n=5$.}
\label{twintrees1}
\end{figure}
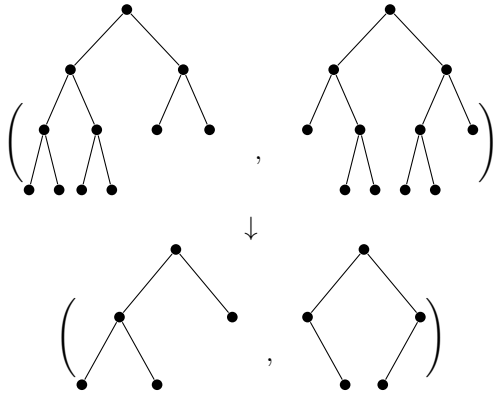

\noindent Clearly $\Twin{n}$ and $\widetilde{\Twin{n}}$ are in bijection.

\begin{definition}[{\cite[\S 1.2]{Stanley2}}]
Given a word $w=w_1w_2\ldots w_n$ with distinct letters in $\mathbb{N}_{\geq 1}$, recursively define a binary tree $\mathrm{incr}(w)$ called the \emph{increasing binary tree} for $w$ by saying that if $w=uxv$ with $x=\mathrm{min}\{w_1,\ldots w_n\}$, then $\mathrm{incr}(w)$ has $x$ as its root, $\mathrm{incr}(u)$ as its left subtree, and $\mathrm{incr}(v)$ as its right subtree. Similarly, recursively define a binary tree $\mathrm{decr}(w)$ called the \emph{decreasing binary tree} of $w$ by saying that if $w=uxv$ with $x=\mathrm{max}\{w_1,\ldots w_n\}$, then $\mathrm{decr}(w)$ has $x$ as its root, $\mathrm{decr}(u)$ as its left subtree, and $\mathrm{incr}(v)$ as its right subtree.
\end{definition}

While this process gives a labelled binary tree, we will only consider $\mathrm{incr}(w)$ and $\mathrm{decr}(w)$ to be the underlying unlabelled binary tree.

\begin{definition}
Let $\Psi:S_n\mapsto \bt{n}\times\bt{n}$ be the map that sends a permutation $w$ to the pair of binary trees $\left(\mathrm{incr}(w),\mathrm{decr}(w)\right)$.
\end{definition}

\begin{theorem}[Dulucq and Guibert, \cite{DG1}]
$\Psi: \mathrm{Bax}_n \mapsto \widetilde{\mathrm{Twin}_n}$ is a bijection.
\end{theorem}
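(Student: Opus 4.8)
The plan is to split the proof into three parts: verifying that $\Psi$ lands in the correct codomain, injectivity on $\mathrm{Bax}_n$, and surjectivity onto $\widetilde{\Twin{n}}$. The first part I would handle by a direct analysis of leaf codes that works for \emph{any} $w\in\mathfrak{S}_n$. In $\mathrm{Extend}(\mathrm{incr}(w))$ the $n$ internal nodes sit at positions $1,\ldots,n$ in in-order, and the $n+1$ leaves occupy the $n+1$ gaps around them; the $n-1$ gaps recorded by $\code$ are exactly those between consecutive positions $i,i+1$. Using the min-heap property of $\mathrm{incr}(w)$ together with the in-order predecessor/successor rules, I would show that the gap-$i$ leaf is a left leaf (code bit $1$) precisely when $w_i<w_{i+1}$, so $\code(\mathrm{Extend}(\mathrm{incr}(w)))$ is the ascent-indicator word of $w$. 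The max-heap property of $\mathrm{decr}(w)$ gives the mirror statement, making its code the descent-indicator word. The two words are complementary, so $\mathrm{Extend}^{\times 2}(\Psi(w))\in\Twin{n}$ for every $w$, giving $\Psi(\mathfrak{S}_n)\subseteq\widetilde{\Twin{n}}$ and, as a bonus, matching the $k$ ascents of $w$ to the $k$ ones of the code, which is the refinement by $(k,\ell)$ needed elsewhere in the paper.

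Since $\Psi$ only remembers unlabelled shapes, it is far from injective on all of $\mathfrak{S}_n$, so the real content is that each fibre of $\Psi$ contains exactly one Baxter permutation. I would prove this by induction on the recursive structure that builds the two trees simultaneously: the root of $\mathrm{incr}(w)$ marks the position of the value $1$ and the root of $\mathrm{decr}(w)$ the position of the value $n$, and the two shapes constrain how the word splits around these two positions. The Baxter conditions (avoidance of 3-14-2 and 2-41-3) should force this splitting to be unique, in the sense that any freedom in reconstructing $w$ from the pair of shapes would manifest as a choice producing one of the two forbidden vincular patterns. Establishing this equivalence — that pattern-avoidance is exactly the obstruction to non-uniqueness in the reconstruction — is the step I expect to be the main obstacle, since this is where the combinatorial heart of the statement lies.

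For surjectivity I would argue that every twin pair is hit and then locate its unique Baxter representative. The primary route is to make the induction of the previous paragraph constructive: given $(T_L,T_R)\in\widetilde{\Twin{n}}$, read off the forced descent set from the codes, determine the positions of $1$ and $n$ from the roots, and recursively assemble the unique Baxter word, checking at each stage that no forbidden pattern is introduced. A cleaner alternative, if a count of $\Twin{n}$ is available, is a cardinality shortcut: having shown $\Psi|_{\mathrm{Bax}_n}$ is injective with image inside $\widetilde{\Twin{n}}$, it suffices to verify $|\Twin{n}|=B(n)$ (refined so that pairs with $k$ ones in $\code(T_L)$ number $\Theta_{k,\ell}$), after which equality of finite cardinalities upgrades the injection to a bijection. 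I would present the constructive inverse as the main argument and use the count as a cross-check, since the refined count also confirms that the bijection is compatible with the grading by $(k,\ell)$.
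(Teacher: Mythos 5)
Your first part is correct and essentially complete: the in-order gap analysis showing that $\code(\mathrm{Extend}(\mathrm{incr}(w)))$ is the ascent-indicator word of $w$ (a node of $\mathrm{incr}(w)$ at in-order position $i$ has a right child precisely when $w_i<w_{i+1}$, so the unique leaf in gap $i$ is a left leaf exactly at ascents), with the dual statement for $\mathrm{decr}(w)$, is sound; it proves $\Psi(\mathfrak{S}_n)\subseteq\widetilde{\Twin{n}}$ for \emph{all} permutations and recovers the paper's remark that $k$ ascents give $k+1$ left leaves. For calibration: the paper does not prove this theorem at all --- it is imported from Dulucq and Guibert \cite{DG1} --- so the only question is whether your argument stands on its own.

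It does not, yet. The entire content of the theorem is the step you yourself flag as ``the main obstacle'' and then leave as an expectation: that each twin pair of shapes admits exactly one consistent Baxter filling, i.e.\ that avoidance of 3-14-2 and 2-41-3 is precisely the obstruction to non-uniqueness in reconstructing $w$ from $(\mathrm{incr}(w),\mathrm{decr}(w))$. Nothing in the proposal establishes either direction (existence of a Baxter permutation in every fibre, or its uniqueness), and the induction you sketch is underspecified: knowing the in-order positions of the two roots (the positions of the values $1$ and $n$) does not split the reconstruction into independent subproblems, because the left/right subtrees of $\mathrm{incr}(w)$ and of $\mathrm{decr}(w)$ partition the positions of $w$ in two different, interleaved ways, and controlling that interleaving is where the actual work lies --- this is exactly what Dulucq and Guibert's analysis accomplishes. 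The cardinality fallback inherits the same problem plus a circularity: it still needs the unproven injectivity, and the refined count of $\Twin{n}$ by code weight is available in this paper only as another cited Dulucq--Guibert theorem (the bijection from $\Twin{n}$ to $(k,\ell)$-Baxter paths), a result of the same depth as the one being proved, so it cannot serve as an independent shortcut. In short: the easy third of the proof is done, the hard two-thirds is correctly identified but not executed.
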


It is known that if $w$ has $k$ ascents and $\ell$ descents, then $\mathrm{Extend}(\mathrm{incr}(w))$ will have $k+1$ left leaves and $\ell+1$ right leaves.

\begin{definition}
%Let $\mathrm{AltBax_{2n}}=\{w\in \mathrm{Bax}_{2n}\,|\,\forall i\in [1,n],w_{2i-1}<w_{2i}>w_{2i+1}\}$ be the set of alternating Baxter permutations.

Say $w\in\ S_n$ is alternating if $w_1<w_2>w_3<w_4\ldots\phantom{A}$. Let $\mathrm{AltBax_{2n}}$ denote the set of alternating Baxter permutations of length $2n$.
\end{definition}

Recall that alternating permutations have the property that $\mathrm{incr}(w)$ (resp. $\mathrm{decr}(w)$) is a complete binary tree if we add a left-most left leaf (resp. right-most right leaf)~\cite[Prop. 1.3.14]{Stanley2}.

\begin{corollary}
The function $\Psi$ is a bijection from alternating Baxter permutations of length $2n$ to all pairs of complete binary trees with $2n+1$ nodes each.
\label{altbaxtocbt}
\end{corollary}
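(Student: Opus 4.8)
The plan is to show that $\Psi$, suitably interpreted, restricts to an injection $\Altbax{2n}\to\cbt{2n+1}\times\cbt{2n+1}$ whose image is everything, deducing surjectivity from a cardinality count. First I would pin down the codomain. By the recalled property of alternating permutations, for $w\in\Altbax{2n}$ each of $\mathrm{incr}(w)$ and $\mathrm{decr}(w)$ becomes a complete binary tree on $2n+1$ nodes after adjoining a single left-most left leaf (resp. right-most right leaf). Since a complete binary tree has a canonical left-most left leaf and right-most right leaf, this adjunction is reversible, so I may legitimately regard $\Psi(w)$ as a pair in $\cbt{2n+1}\times\cbt{2n+1}$; this makes $\Psi$ well defined into the claimed target.

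For injectivity I would simply invoke the previous theorem. We have $\Altbax{2n}\subseteq\Bax_{2n}$, and the Dulucq--Guibert theorem asserts that $\Psi$ is a bijection, in particular injective, on all of $\Bax_{2n}$; hence its restriction to the alternating permutations is injective, and the leaf-adjoining step loses no information (one recovers $(\mathrm{incr}(w),\mathrm{decr}(w))$ by deleting the two distinguished leaves, and then $w$ by Dulucq--Guibert).

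The real content is surjectivity, and this is where I expect the main obstacle. I would pursue a counting argument, with a structural remark as backup. Observe that $\mathrm{Trunc}$ gives $\cbt{2n+1}\cong\bt n$ and $|\bt n|=C_n$, so $|\cbt{2n+1}\times\cbt{2n+1}|=C_n^2$; it therefore suffices to prove $|\Altbax{2n}|=C_n^2$, which is the classical enumeration of alternating Baxter permutations (Cori--Dulucq--Viennot). Combined with injectivity and equality of finite cardinalities, this forces $\Psi$ to be onto. The delicate point—and the fact I would concentrate on if I wanted a self-contained bijective proof rather than citing the count—is that passing from arbitrary Baxter permutations to \emph{alternating} Baxter permutations of doubled length $2n$ exactly trades the twin constraint for total freedom: the condition defining $\Twin{n}$ compares $\code(T_L)$ with the bit-complement of $\code(T_R)$, and one must check that on the one-leaf completions arising from alternating permutations this comparison imposes no surviving constraint, so that the two completed trees may be prescribed independently and every pair in $\cbt{2n+1}\times\cbt{2n+1}$ is realized. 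Verifying this vacuity directly (equivalently, constructing a preimage for an arbitrary pair) is the hard step, with the Cori--Dulucq--Viennot count serving as the safety net.
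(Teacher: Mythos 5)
Your argument is correct, and on well-definedness and injectivity it matches the paper; where it diverges is surjectivity. The paper does no counting: the corollary is meant to follow by restricting the Dulucq--Guibert bijection $\Psi:\Bax_{2n}\to\widetilde{\Twin{2n}}$, using exactly the fact about alternating permutations quoted just above it. The ``vacuity'' you flag as the hard step and then route around is in fact a short check, because of a standard refinement of the leaf-count fact the paper recalls: the word $\code(\mathrm{Extend}(\mathrm{incr}(w)))$ \emph{is} the ascent--descent word of $w$, and $\code(\mathrm{Extend}(\mathrm{decr}(w)))$ is its complement. Consequently, for a binary tree $t$ on $2n$ nodes, ``$t$ becomes complete upon adjoining the distinguished leaf'' is equivalent to ``$\code(\mathrm{Extend}(t))$ equals the fixed alternating word $1010\ldots$''; hence for an arbitrary pair $(T_1,T_2)\in\cbt{2n+1}\times\cbt{2n+1}$, the pair obtained by deleting the two distinguished leaves automatically has complementary codes, so it lies in $\widetilde{\Twin{2n}}$, and its unique Baxter preimage under $\Psi$ has alternating descent word, i.e.\ lies in $\Altbax{2n}$. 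This yields surjectivity with no enumeration and, importantly, an explicit description of $\Psi^{-1}$ on an arbitrary pair of complete trees---which the paper exploits later when it transports the involution through $\Psi^{-1}$ back to $\Altbax{2n}$. Your counting route (injectivity plus $|\Altbax{2n}|=c_n^2=|\cbt{2n+1}\times\cbt{2n+1}|$ by Cori--Dulucq--Viennot) is logically sound as a citation, but it is circular in spirit---the CDV count was historically obtained from essentially this bijection---and it buys less, since it certifies that preimages exist without saying what they are.
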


\begin{definition}
\label{treeinvdef}
The natural involution on pairs of complete binary trees (which has $\mathrm{Twin}_n$ as a subset) is taking the mirror reflection of each tree, and then swapping the two trees (see Figure~\ref{twintrees2}). 
\end{definition}

\begin{prop}
$\Psi$ equivariantly maps permutations with the action of conjugation by the longest element to pairs of twin trees with this involution action.
\end{prop}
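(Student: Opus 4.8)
The plan is to prove the identity $\Psi \circ (\text{conjugation by } w_0) = (\text{tree involution}) \circ \Psi$ directly, by first rewriting conjugation at the level of words and then tracking how that operation passes through the constructions $\mathrm{incr}$ and $\mathrm{decr}$. First I would compute conjugation by the longest element in one-line notation: writing $w_0$ for the permutation $i\mapsto n+1-i$, a short computation gives $(w_0 w w_0)_i = n+1 - w_{n+1-i}$. Thus conjugation by $w_0$ is exactly the \emph{reverse-complement} operation on $w = w_1\cdots w_n$, which reverses the order of the letters and replaces each value $v$ by $n+1-v$. I would record reverse-complement as the composite of two commuting operations: reversal $w\mapsto w^r = w_n\cdots w_1$, and complementation $w\mapsto w^c$ with $(w^c)_i = n+1-w_i$.

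Next I would establish two elementary facts about how these operations act on the underlying unlabelled increasing and decreasing trees, each proved by induction on $n$ straight from the recursive definitions. Complementation interchanges the minimum and the maximum while preserving the positional decomposition $w = uxv$, so on unlabelled trees it exchanges the two constructions: $\mathrm{incr}(w^c) = \mathrm{decr}(w)$ and $\mathrm{decr}(w^c) = \mathrm{incr}(w)$. Reversal fixes the extremal letter but swaps the factors lying to its left and right, so it sends each tree to its mirror reflection: $\mathrm{incr}(w^r)$ is the mirror of $\mathrm{incr}(w)$, and likewise $\mathrm{decr}(w^r)$ is the mirror of $\mathrm{decr}(w)$. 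The inductive step for reversal is the one place where care is needed, since one must match the left and right subtrees against the reflected subtrees in the correct order; I expect this bookkeeping, rather than any conceptual difficulty, to be the main thing to get right.

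Finally I would compose these facts. Since reverse-complement equals $w^{rc} = (w^c)^r$, the two lemmas give $\mathrm{incr}(w^{rc}) = \mathrm{mirror}(\mathrm{decr}(w))$ and $\mathrm{decr}(w^{rc}) = \mathrm{mirror}(\mathrm{incr}(w))$, so that
\[
\Psi(w_0 w w_0) = \big(\mathrm{mirror}(\mathrm{decr}(w)),\ \mathrm{mirror}(\mathrm{incr}(w))\big).
\]
On the other hand, the involution of Definition~\ref{treeinvdef} applied to $\Psi(w) = (\mathrm{incr}(w), \mathrm{decr}(w))$ first mirrors each tree and then swaps them, producing precisely the same pair. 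Here I would also note that mirror reflection commutes with $\mathrm{Trunc}$ and $\mathrm{Extend}$, so it is harmless to describe the involution directly on the truncated trees in $\bt{n}$ where $\Psi$ actually lands, rather than on their completions. Comparing the two displayed pairs gives the desired equivariance.
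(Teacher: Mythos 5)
Your proof is correct and takes the same route the paper intends: the paper dismisses this proposition as ``obvious from the definition of $\Psi$ in terms of increasing/decreasing trees,'' and your decomposition of conjugation by $w_0$ into reversal and complementation, with the two inductive lemmas $\mathrm{incr}(w^c)=\mathrm{decr}(w)$ and $\mathrm{incr}(w^r)=\mathrm{mirror}(\mathrm{incr}(w))$, is exactly the verification being waved at. Nothing is missing; you have simply written out the details the paper omits.
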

This proposition is obvious from the definition of $\Psi$ in terms of increasing/decreasing trees.

\begin{figure}[!hbp]

\centering
$\begin{array}{cc}
\begin{array}{c}
 43512 \\
\updownarrow \\
\raisebox{\height}{$\Bigg($}
\begin{tikzpicture}
[inner sep=0mm, level distance=9mm, level 3/.style={sibling distance=5mm}, level 2/.style={sibling distance=10mm}] 
    \node[circle,draw] {1}  	
      child {node[circle,draw] {3}
	child {node[circle,draw] {4}}
	child {node[circle,draw] {5}}
      }
      child {node[circle,draw] {2}
      };
\draw (1.25,-1.5) node {,};
\begin{scope}[xshift=25mm]
[inner sep=0mm, level distance=9mm, level 3/.style={sibling distance=5mm}, level 2/.style={sibling distance=10mm},xshift=100cm] 
    \node[circle,draw] {5}  	
      child {node[circle,draw] {4}
	child[missing] {node {m1}}
	child {node[circle,draw] {3}}
      }
      child{node[circle,draw] {2}
	child{node[circle,draw] {1}}
	child[missing] {node {m2}}
	};
\end{scope}
\end{tikzpicture}
\raisebox{\height}{$\Bigg)$}\\
\downarrow \\
\raisebox{\height}{$\Bigg($}
\begin{tikzpicture}
[inner sep=.5mm, level distance=8mm, level 3/.style={sibling distance=4mm}, level 2/.style={sibling distance=7mm}] 
    \node[fill,circle] {}  	
      child{ node[fill,circle] {}
	child{ node[fill,circle] {}
	  child{ [dashed] node[fill,circle] {} }
	  child {[dashed] node[fill,circle,label=below:1] {} }
	}
	child { node[fill,circle] {} child {[dashed] node[fill,circle,label=below:0] {}} child {[dashed] node[fill,circle,label=below:1] {}}}
      }
      child { node[fill,circle] {} child {[dashed] node[fill,circle,label=below:0] {}} child{ [dashed] node[fill,circle] {} }
      };

\draw (1.75,-2) node {,};
\begin{scope}[xshift=35mm]
[inner sep=0mm, level distance=8mm, level 3/.style={sibling distance=4mm}, level 2/.style={sibling distance=4mm}] 
    \node[fill,circle] {}  	
      child { node[fill,circle] {}
	child{ [dashed] node[fill,circle] {} }
	child{ node[fill,circle] {} child {[dashed] node[fill,circle,label=below:0] {} } child {[dashed] node[fill,circle,label=below:1] {}}}
      }
      child{ node[fill,circle] {}
	child{ node[fill,circle] {} child {[dashed] node[fill,circle,label=below:0] {}} child {[dashed] node[fill,circle,label=below:1] {}}  }
	child{ [dashed] node[fill,circle] {} }
	};
\end{scope}
\end{tikzpicture}
\raisebox{\height}{$\Bigg)$}\\
\end{array}

&

\begin{array}{c}
 35142 \\
\updownarrow \\
\raisebox{\height}{$\Bigg($}
\begin{tikzpicture}
[inner sep=.0mm, level distance=9mm, level 3/.style={sibling distance=5mm}, level 2/.style={sibling distance=10mm},xshift=100cm] 
    \node[circle,draw] {1}  	
      child {node[circle,draw] {3}
	child[missing] {node {m1}}
	child {node[circle,draw] {5}}
      }
      child{node[circle,draw] {2}
	child{node[circle,draw] {4}}
	child[missing] {node {m2}}
	};
	\draw (1.25,-1.5) node {,};	
\begin{scope}[xshift=25mm]
[inner sep=0mm, level distance=9mm, level 3/.style={sibling distance=5mm}, level 2/.style={sibling distance=10mm}] 
    \node[circle,draw] {5}  	
      child {node[circle,draw] {3}}
			child {node[circle,draw] {4}
				child {node[circle,draw] {1}}
      	child {node[circle,draw] {2}}
      };
\end{scope}
\end{tikzpicture}
\raisebox{\height}{$\Bigg)$}\\
\downarrow \\
\raisebox{\height}{$\Bigg($}
\begin{tikzpicture}
[inner sep=.5mm, level distance=8mm, level 3/.style={sibling distance=4mm}, level 2/.style={sibling distance=7mm}]
    \node[fill,circle] {}  	
      child { node[fill,circle] {}
	child{ [dashed] node[fill,circle] {} }
	child{ node[fill,circle] {} child {[dashed] node[fill,circle,label=below:0] {}} child {[dashed] node[fill,circle,label=below:1] {}}}
      }
      child{ node[fill,circle] {}
	child{ node[fill,circle] {} child {[dashed] node[fill,circle,label=below:0] {}} child {[dashed] node[fill,circle,label=below:1] {}}  }
	child{ [dashed] node[fill,circle] {} }
	};

\draw (1.75,-2) node {,};
\begin{scope}[xshift=35mm]
[inner sep=0mm, level distance=8mm, level 3/.style={sibling distance=4mm}, level 2/.style={sibling distance=4mm}] 
 
    \node[fill,circle] {}  
    	child { node[fill,circle] {} child[dashed] child {[dashed] node[fill,circle,label=below:1] {}} 
      }	
      child{ node[fill,circle] {}
	child{ node[fill,circle] {}
	  child {[dashed] node[fill,circle,label=below:0] {} }
	  child {[dashed] node[fill,circle,label=below:1] {} }
	}
	child { node[fill,circle] {} child {[dashed] node[fill,circle,label=below:0] {} } child {[dashed] node[fill,circle] {} }}
      };
\end{scope}
\end{tikzpicture}
\raisebox{\height}{$\Bigg)$}\\
\end{array}\\
\end{array}$

\caption{Map from $\mathrm{Bax}_n$ to $\Twin{n}$ for $n=5$, and the corresponding action under involution.}
\label{twintrees2}
\end{figure}
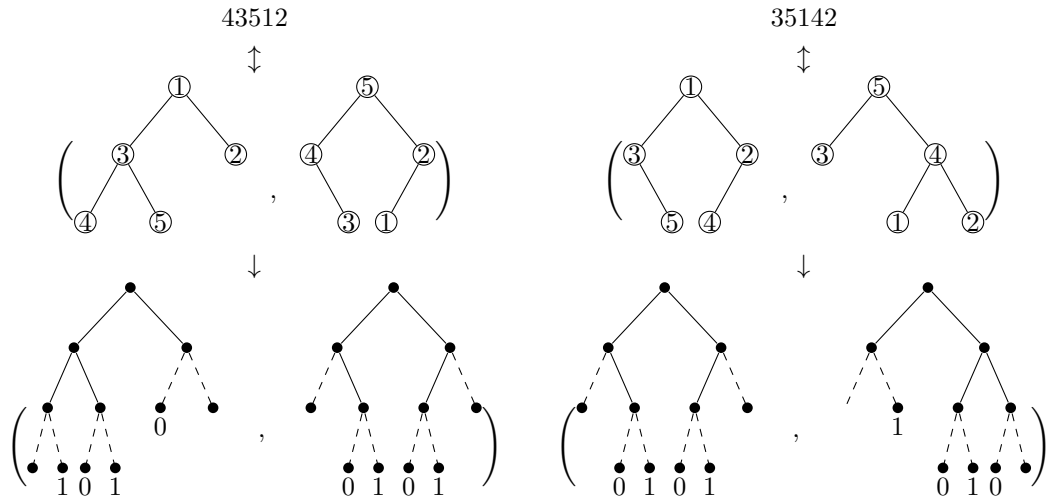

The equivalence of Baxter permutations fixed under conjugation by $w_0$ and triples of non-intersecting of lattice paths fixed under rotation, along with a number of other Baxter objects fixed under their respective involutions, is given by Felsner, Fusy, Noy, and Orden~\cite{BBFRO}. They follow from the fact that the bijections between the corresponding Baxter objects are all equivariant with respect to the natural involutions. 

\begin{theorem}[Dulucq and Guibert,~\cite{DG1}]
There is a bijection between elements of $\mathrm{Twin}_n$ with $k+1$ left leaves and $\ell+1$ right leaves, and ($k$,$\ell$)-Baxter paths..
\end{theorem}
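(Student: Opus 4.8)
The plan is to build the bijection by composing maps already available in the excerpt and then checking that the relevant statistics are transported correctly. The key point is that $\Twin{n}$ is itself in bijection with $\Bax_n$: by the Dulucq--Guibert theorem $\Psi\colon\Bax_n\to\widetilde{\Twin{n}}$ is a bijection, and $\mathrm{Trunc}^{\times 2}\colon\Twin{n}\to\widetilde{\Twin{n}}$ is a bijection with inverse $\mathrm{Extend}^{\times 2}$, so the composite $\mathrm{Extend}^{\times 2}\circ\Psi\colon\Bax_n\to\Twin{n}$ is a bijection. Under it a permutation $w$ is sent to the pair $\bigl(\mathrm{Extend}(\mathrm{incr}(w)),\mathrm{Extend}(\mathrm{decr}(w))\bigr)$, so the left tree of the image is exactly $\mathrm{Extend}(\mathrm{incr}(w))$.

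Next I would pin down the statistic on this composite. By the quoted fact, if $w$ has $k$ ascents and $\ell$ descents then $\mathrm{Extend}(\mathrm{incr}(w))$ has $k+1$ left leaves and $\ell+1$ right leaves; hence $\mathrm{Extend}^{\times 2}\circ\Psi$ restricts to a bijection between Baxter permutations with $k$ ascents and $\ell$ descents and those twin pairs whose left tree has $k+1$ left leaves and $\ell+1$ right leaves. Since the twin condition forces $\code(T_R)$ to be the bitwise complement of $\code(T_L)$, the leaf counts of the second tree are determined by those of the first (indeed they are swapped), so recording the statistic on the pair is the same as recording it on $T_L$, which is the reading of the theorem statement I would adopt.

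Finally I would invoke object (D): the Dulucq--Guibert bijection between Baxter permutations and $(k,\ell)$-Baxter paths sends a permutation with $k$ ascents and $\ell$ descents to a $(k,\ell)$-Baxter path. Composing this bijection with the inverse of the map from the previous step, namely $\Psi^{-1}\circ\mathrm{Trunc}^{\times 2}\colon\Twin{n}\to\Bax_n$, yields a bijection carrying the elements of $\Twin{n}$ with $k+1$ left leaves and $\ell+1$ right leaves onto the $(k,\ell)$-Baxter paths, as desired.

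I expect the only real work to lie in the statistic-tracking of the middle step: confirming the quoted count of left and right leaves of $\mathrm{Extend}(\mathrm{incr}(w))$ in terms of the ascents and descents of $w$, and verifying that the conventions about the excluded leftmost left leaf and rightmost right leaf (which account for the two ``$+1$''s) are consistent across $\Psi$, $\mathrm{Extend}^{\times 2}$, and the Dulucq--Guibert path bijection. The structural assertion that the overall composite is a bijection is immediate once each constituent map is known to be one.
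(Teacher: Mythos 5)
You have tracked the statistics correctly: reading ``$k+1$ left leaves and $\ell+1$ right leaves'' as a statistic on $T_L$, noting that the twin condition forces the complementary counts $\ell+1$ and $k+1$ on $T_R$, and the class-by-class restriction argument for a bijection that maps each statistic class into the corresponding class are all sound. Note, however, that the paper does not itself prove this theorem --- it quotes it from Dulucq and Guibert --- but it does record the substance of their construction in the proof of the proposition immediately following: the first (resp.\ third) lattice path is read directly off $T_L$ (resp.\ $T_R$) by traversing the internal nodes in infix order and recording whether each is a left or right child of its parent, while the middle path is determined by $\mathrm{code}(T_L)$, which by the twin condition carries the same information as $\mathrm{code}(T_R)$. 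The intended proof is thus a direct encoding of the pair of trees as a triple of non-intersecting paths.

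The genuine gap in your proposal is circularity. You defer all of the content to the cited bijection between objects (A) and (D), but that bijection --- both in the Dulucq--Guibert sources and in this paper's own diagram of maps --- is \emph{constructed} by factoring through $\mathrm{Twin}_n$: one sends $\mathrm{Bax}_n$ to $\mathrm{Twin}_n$ via $\mathrm{Extend}^{\times 2}\circ\Psi$ and then applies exactly the tree-to-path correspondence asserted in the theorem you are trying to prove. Composing the (A)$\to$(D) bijection with $\Psi^{-1}\circ\mathrm{Trunc}^{\times 2}$ therefore does not establish the theorem; it merely unwinds the definition of the permutation-to-path map and hands back the correspondence whose existence was to be shown. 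A non-circular argument must construct the tree-to-path map directly (infix readings of $T_L$ and $T_R$ give the outer paths, the common code gives the middle one, and the leaf counts fix the endpoints $B_1$, $B_2$, $B_3$ while the twin/nesting structure yields the non-intersecting condition), or else exhibit a permutation-to-path bijection established independently of twin trees, which neither the excerpt nor the cited papers supply. Your closing remark that ``the only real work'' lies in statistic-tracking is exactly backwards: the statistic-tracking is the routine part, and the real work is the construction you have assumed away.
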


\begin{prop}
The above bijection equivariantly takes the natural involution on $\mathrm{Twin}_n$ to rotation by $180^{\circ}$ on triples of non-intersecting lattice paths.
\end{prop}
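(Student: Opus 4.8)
The plan is to turn the proposition into a statement about partitions. By the bijection of the previous subsection relating objects (D) and (G), a $(k,\ell)$-Baxter path is the same datum as a plane partition in the $k\times\ell\times3$ box, i.e. a nested triple $\lambda_3\subseteq\lambda_2\subseteq\lambda_1$ of partitions in the $k\times\ell$ box; moreover that theorem already identifies the effect of the path involution, namely $180^\circ$ rotation carries $(\lambda_1,\lambda_2,\lambda_3)$ to $(\lambda_3^c,\lambda_2^c,\lambda_1^c)$, where $\lambda^c$ is the complement in the $k\times\ell$ box. So it suffices to express the Dulucq--Guibert bijection $\Twin{n}\to\{(k,\ell)\text{-Baxter paths}\}$ as a map to such nested triples and to check that the natural tree involution of Definition~\ref{treeinvdef} induces exactly the operation ``reverse the order of the three partitions and complement each one.''

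First I would record how the triple is read from a twin pair $(T_L,T_R)$. Because $\mathrm{Extend}(\mathrm{incr}(w))$ has $k+1$ left and $\ell+1$ right leaves, $\code(T_L)$ is a word of length $n-1=k+\ell$ with $k$ ones and $\ell$ zeros, which under the step convention is a monotone path in the $k\times\ell$ box; this gives the middle partition $\lambda_2$, and the twin relation $\code(T_R)=\overline{\code(T_L)}$ (bar $=$ interchange of $0$'s and $1$'s) means $T_R$ encodes the same middle path. The outer partitions $\lambda_1$ and $\lambda_3$ are read from the full shapes of $T_L$ and $T_R$, the tree structure beyond the bare code being what distinguishes the enveloping paths, and the $\Twin{n}$ compatibility is precisely what forces $\lambda_3\subseteq\lambda_2\subseteq\lambda_1$.

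Next I would analyze the involution by its two atomic moves. Mirroring a complete binary tree swaps every left child with every right child, hence both reverses the left-to-right order in which leaves are read and interchanges left with right leaves; on codes this is the reverse-and-bar operation. Composing mirroring with the swap $T_L\leftrightarrow T_R$, the new left tree is $\mathrm{mirror}(T_R)$, whose code is the reverse of $\code(T_L)$, the bar having cancelled against the twin relation; and reversing the step word of a path in a box is exactly $180^\circ$ rotation of that single path, i.e. complementation, so $\lambda_2\mapsto\lambda_2^c$ as required. Meanwhile the swap interchanges the roles of $T_L$ and $T_R$, hence of $\lambda_1$ and $\lambda_3$. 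Granting that the per-tree reading sends $\mathrm{mirror}(T)$ to the complement of the partition read from $T$, the new top partition, read from $\mathrm{mirror}(T_R)$, equals $\lambda_3^c$, and the new bottom equals $\lambda_1^c$; together with the middle this is precisely $(\lambda_1,\lambda_2,\lambda_3)\mapsto(\lambda_3^c,\lambda_2^c,\lambda_1^c)$, matching rotation.

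The main obstacle is the single geometric input feeding all three cases: that the reading $T\mapsto\lambda$ intertwines mirror reflection of the tree with complementation of the partition in the box. For the middle partition this is transparent, since there it depends only on the code and reduces to reversal of words; for the outer partitions it requires unwinding the explicit Dulucq--Guibert reading of the enveloping paths from the internal-node structure, and the delicate point is reconciling conventions, i.e. verifying that the reverse-and-bar behaviour of mirroring is compensated by the correct assignment of which tree feeds $\lambda_1$ versus $\lambda_3$ so that the net effect is genuine complementation and not transposition into an $\ell\times k$ box. I would settle this by describing the outer reading as a traversal of the truncated tree $\mathrm{Trunc}(T)\in\bt{n}$, observing that reflection reverses the traversal and hence reverses the associated step word; a final, purely bookkeeping, check that the index reversal from the swap combined with these three complementations reproduces the rotation operation of the previous subsection then completes the identification.
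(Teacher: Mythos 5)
Your proposal is correct and follows essentially the same route as the paper: the paper's own (two-sentence) proof likewise reads the first and third paths from the infix traversal of the internal nodes of $T_L$ and $T_R$, recording left/right child status, and the middle path from $\code(T_L)$ (well-defined by the twin relation), leaving implicit the check that mirroring reverses the traversal and that swapping the trees swaps the outer paths. Your write-up actually supplies more detail than the paper does --- the reverse-and-bar bookkeeping on codes, its cancellation against the twin relation for the middle path, and the convention check ensuring the outer readings yield genuine complementation rather than transposition are precisely the verifications the paper leaves to the reader.
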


\begin{proof}
Given a pair of twin trees $\left(T_L,T_R\right)$, the first path (resp. third path) arises from reading the internal nodes of $T_L$ (resp. $T_R$) in infix order, recording whether they are left or right children of their parents. The middle path is determined by $\mathrm{code}(T_L)$, which by the twin condition encodes the same information as $\mathrm{code}(T_R)$. 
\end{proof}

Felsner, Fusy, Noy, and Orden~\cite{BBFRO} have additionally shown the bijections to a number of other Baxter objects are also equivariant. One interesting Baxter family not included are Baxter tableaux, or $3\times n$ standard Young tableaux with no consecutive entries in the same row, which we will now look at.

Cori, Dulucq, and Viennot~\cite{CDV} begin by working with a larger set of objects, counted not by $B(n)$, but by $c_n^2$, where $c_n=\frac{1}{n+1}\binom{2n}{n}$ is the \emph{Catalan number}.

\begin{definition}
Let $Y_n^{(k)}$ be the language of all words in $\{1,2,\ldots,k\}$ such that each letter appears exactly $n$ times, and for any prefix of the word, $i$ appears at least as often as $i+1$. These are exactly the Yamanouchi words for standard Young tableaux of a $k\times n$ box\footnote{They are also referred to as \emph{stack words}, as they encode the permutations that can be sorted with $k-1$ stacks~\cite{Gire}.}. 
\end{definition}

A Yamanouchi word for a standard Young tableau is a word where the $i^{th}$ letter indicates which row of the tableau $i$ appears in.

Additionally, if we think of 1's as being $z$'s (corresponding to left parentheses) and 2's as being $\overline{z}$'s (corresponding to right parentheses), then $P_{z,\overline{z}}$, the language of well-formed parenthesis systems on the letters $\{z,\bar{z}\}$, corresponds to $\cup_{n\geq 0} Y_{n}^{(2)}$.

Evacuation can be defined more generally, but in the special case of standard Young tableaux of rectangular shape, it takes a particularly nice form.

\begin{definition}
Given a standard Young tableau $T$ of square shape with $N$ boxes, let $\mathrm{evac}(T)$ be the Young tableau we get by rotating $T$ by $180^\circ$, and then replacing each label $i$ with $N+1-i$.
\end{definition}

\noindent See Tables 1-4 at the end for examples. 

\newpage

This action also takes a nice form on the corresponding Yamanouchi words.

\begin{definition}
If $x=x_1 x_2\ldots x_{3n-1} x_{3n}\in Y_n^{(3)}$, then let $$\mathrm{evac}(x)=(3-x_{3n})(3-x_{3n-1})\ldots(3-x_2)(3-x_1).$$
\end{definition} 

\begin{example}
$\mathrm{evac}(112323)=121233$
\end{example}

We introduce an intermediate object, consisting of certain shuffles of two parenthesization systems.

\begin{definition}
Let $\mathrm{Shuffle}_{2n}$ be the set of all shuffles of $P_{a,\bar{a}}$ and $P_{b,\bar{b}}$ of length $2n$ such that for every prefix ending in $b$, the number of $a$'s is strictly greater than the number of $\bar{a}$'s.
\end{definition}

\begin{example}

$ab\bar{b}b\bar{a}a\bar{a}a\bar{b}\bar{a}\in\mathrm{Shuffle}_{10}$

\end{example}

\begin{theorem}[Cori, Dulucq, Viennot~\cite{CDV}]
There are bijections between $\mathrm{AltBax}_{2n}$, $\mathrm{Shuffle}_{2n}$, and pairs of complete binary trees with $2n+1$ nodes each.\\

\noindent In particular, each set has $c_n^2$ objects.
\label{chartproof}
\end{theorem}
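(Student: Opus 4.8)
The plan is to treat the theorem as two bijective statements glued to an easy enumeration. The key observation is that one of the three correspondences is already in hand: the map between $\mathrm{AltBax}_{2n}$ and pairs of complete binary trees on $2n+1$ nodes each is exactly $\Psi$, shown to be a bijection in Corollary~\ref{altbaxtocbt}. So the only genuinely new map to construct is one between $\mathrm{Shuffle}_{2n}$ and one of the other two families, and I would aim it at the pairs of complete binary trees, since that is the family on which the enumeration is transparent.

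I would dispose of the enumeration first, since it is routine. Truncating leaves is a bijection $\mathrm{Trunc}\colon\cbt{2n+1}\to\bt{n}$, and binary trees on $n$ nodes are counted by the Catalan number $c_n$ (via the standard encoding $D(T)=(\,D(T_L)\,)\,D(T_R)$ as a Dyck word of semilength $n$). Hence pairs of complete binary trees on $2n+1$ nodes number $c_n^2$, and once all three families are shown to be in bijection this common value is $c_n^2$, as claimed.

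For the main construction I would read a shuffle word $w\in\mathrm{Shuffle}_{2n}$ into a pair of trees through its two parenthesis subsystems. Forgetting the colors sends $w$ to a balanced word of semilength $n$ (the $a$- and $b$-heights are separately nonnegative, so their sum is too, and each subsystem being balanced forces the numbers of $a$'s and $b$'s to total $n$), while the $a$-subword and the $b$-subword are individually balanced. The content of the theorem is that these data, together with the placement constraint ``every prefix ending in $b$ leaves an $a$ open,'' repackage bijectively into an ordered pair of complete binary trees: one reads the interleaving to decide, node by node, how the two tree shapes branch, and the constraint is what guarantees reversibility. The inverse shuffles the parenthesis encodings of the two trees back together by the same rule, and I would verify it lands in $\mathrm{Shuffle}_{2n}$ by checking the constraint holds.

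The main obstacle is precisely this last bijection: showing the reading procedure is well defined (that the $b$-constraint is automatic) and invertible. The subtle point is that the tree pair is \emph{not} recovered from the two subwords alone—reading off the pair $(D_a,D_b)$ would see only a pair of trees of total size $n$, giving the wrong enumeration—so one must show that the positional interleaving carries exactly the missing information. I expect the cleanest way to pin this down is to route through $\Psi$ and Corollary~\ref{altbaxtocbt}: transport the alternating and Baxter pattern-avoidance conditions on a permutation $w$ through $\mathrm{incr}(w)$ and $\mathrm{decr}(w)$, match them against the shuffle constraint, and thereby present the composite $\mathrm{AltBax}_{2n}\to\mathrm{Shuffle}_{2n}$ as a manifest bijection, with the pair of complete binary trees coming for free from Corollary~\ref{altbaxtocbt}.
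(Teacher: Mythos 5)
Your overall architecture matches how the paper assembles this result: the enumeration is disposed of via $\mathrm{Trunc}$ and the Catalan count of $\bt{n}$, the correspondence $\Altbax{2n}\leftrightarrow\cbt{2n+1}\times\cbt{2n+1}$ is exactly $\Psi$ together with Corollary~\ref{altbaxtocbt}, and everything reduces to a single bijection involving $\mathrm{Shuffle}_{2n}$. Your observation that the pair of parenthesis subwords alone cannot suffice (it would give pairs of trees of \emph{total} size $n$, the wrong count) is correct and is indeed the reason the interleaving data is essential.

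The genuine gap is that the one bijection that constitutes the content of the theorem --- the map between $\mathrm{Shuffle}_{2n}$ and either of the other two families --- is never constructed. Both of your candidate routes are deferred at precisely this step: ``one reads the interleaving to decide, node by node, how the two tree shapes branch, and the constraint is what guarantees reversibility'' and ``transport the \dots conditions \dots and match them against the shuffle constraint'' are statements of what must be proved, not arguments. The concrete mechanism, which the paper recalls in Definition~\ref{def12} and its accompanying table, is a local analysis of an alternating Baxter permutation around each value $p$: the Baxter condition forces a factorization $\pi=\pi'\,p\,\pi_{-}\,\pi_{+}\,(p+1)\,\pi''$ or $\pi=\pi'\,(p+1)\,\pi_{+}\,\pi_{-}\,p\,\pi''$, and the resulting \emph{type} of $p$ (relative order of $p$ and $p+1$, emptiness of $\pi_{-}$, emptiness of $\pi_{+}$) leaves exactly two candidate values for the consecutive letters $\alpha_p\alpha_{p+1}$; starting from $\alpha_1=a$, consistency chains these choices into a unique word $\alpha$, and conversely the type of each $p$ is readable off $\alpha_p\alpha_{p+1}$, so $\pi$ is recoverable. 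Without this dictionary (or an equivalent explicit rule), you have established neither well-definedness --- in particular that the output word satisfies the prefix condition defining $\mathrm{Shuffle}_{2n}$, which your sketch flags but does not verify --- nor invertibility. As written, the proposal proves the two easy thirds of the theorem and names, but does not close, the hard third.
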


The bijection $\beta:\mathrm{AltBax}_{2n}\to\mathrm{Shuffle}_{2n}$ will later be recalled in Definition~\ref{def12}.

Later, Dulucq and Guibert showed that there was an additional bijection to a special class of Yamanouchi words.

\begin{definition}
Let $Y_n^{(3)}(22)$ be the subset of $Y_n^{(3)}$ consisting of Yamanouchi words avoiding the consecutive pattern 22 (corresponding to Young tableaux with no consecutive entries in the middle row). Let $Y_n^{(3)}(11,22,33)$ be the subset of $Y_n^{(2)}$ consisting of Yamanouchi words avoiding the consecutive patterns 11, 22, or 33 (corresponding to Baxter tableaux).
\end{definition}

\begin{theorem}[Dulucq and Guibert~\cite{DG1}]
There is a bijection between $\mathrm{Shuffle}_{2n}$ and $Y_n^{(3)}(22)$. This bijection is given by the map that sends $\alpha=\alpha_1\alpha_2\ldots\alpha_{2n}\in\mathrm{Shuffle}_{2n}$ to $f(\alpha)=\Phi(\alpha_1)\Phi(\alpha_2)\ldots\Phi(\alpha_{2n})$, where

\begin{equation*}
\begin{cases}
\Phi(a)=1\\
\Phi(b)=21\\
\Phi(\bar{a})=23\\
\Phi(\bar{b})=3\\
\end{cases}
\end{equation*}

\end{theorem}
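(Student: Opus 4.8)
The plan is to show that the map $f$ is a well-defined bijection that respects the Yamanouchi and pattern-avoidance conditions. First I would verify that $f(\alpha)$ lands in $Y_n^{(3)}$. Since each $\alpha\in\mathrm{Shuffle}_{2n}$ is a shuffle of $n$ paired symbols from $P_{a,\bar a}$ and $n$ paired symbols from $P_{b,\bar b}$, a quick count shows that each letter $a,b,\bar a,\bar b$ contributes exactly $n$ occurrences collectively, and inspecting $\Phi$ reveals that the letter $1$ is produced by $a$ and $b$, the letter $3$ by $\bar a$ and $\bar b$, and the letter $2$ by $b$ and $\bar a$; one checks that each of $1,2,3$ occurs exactly $n$ times in $f(\alpha)$. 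The heart of this first step is the prefix (Yamanouchi) inequality: for any prefix of $f(\alpha)$ we need $\#1\geq\#2\geq\#3$. I would prove this by relating prefixes of $f(\alpha)$ to prefixes of $\alpha$, using that $P_{a,\bar a}$ and $P_{b,\bar b}$ are balanced parenthesis systems (so $\#a\geq\#\bar a$ and $\#b\geq\#\bar b$ in every prefix of $\alpha$), and that $\#1-\#2$ and $\#2-\#3$ in $f(\alpha)$ can be expressed as nonnegative combinations of these parenthesis-depth quantities.

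Next I would show $f(\alpha)$ avoids the consecutive pattern $22$. A $2$ arises only as the first letter of $\Phi(b)=21$ or the first letter of $\Phi(\bar a)=23$; in either case it is immediately followed within the same block by a $1$ or a $3$, so two $2$'s can never be adjacent. This is the cleanest part of the argument and follows directly from the fact that no block $\Phi(\cdot)$ begins or ends in a way that could produce consecutive $2$'s across a block boundary.

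The substantive work is constructing the inverse and proving bijectivity. I would argue that $f$ is injective by exhibiting a parsing (decoding) procedure: reading $f(\alpha)$ left to right, a $1$ not immediately preceded by a $2$ decodes to $a$, the pair $21$ decodes to $b$, the pair $23$ decodes to $\bar a$, and a $3$ not part of a $23$ decodes to $\bar b$. I would need to check this parse is unambiguous precisely because $f(\alpha)$ avoids $22$ and satisfies the Yamanouchi condition, so that whenever a $2$ appears, it is unambiguously the start of exactly one block. The hard part will be showing that the decoding always produces a genuine element of $\mathrm{Shuffle}_{2n}$ — that is, that the resulting shuffle has its $a/\bar a$ and $b/\bar b$ subwords balanced \emph{and} satisfies the extra strictness condition that every prefix ending in $b$ has strictly more $a$'s than $\bar a$'s. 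I expect to recover this strict inequality from the strict Yamanouchi inequality $\#1>\#2$ forced at the position just before a $2$ is emitted, translating the ``$b$ is preceded by a surplus of $a$'s'' condition into ``a $2$ is preceded by a surplus of $1$'s.''

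Having established that $f$ maps $\mathrm{Shuffle}_{2n}$ into $Y_n^{(3)}(22)$ injectively with a two-sided inverse given by the parsing above, surjectivity follows by checking that the decode of an arbitrary element of $Y_n^{(3)}(22)$ reassembles under $f$ to the same word, which is a routine block-by-block verification. The main obstacle is the careful bookkeeping in the inverse direction, where the two separate counting conditions defining $\mathrm{Shuffle}_{2n}$ must both be extracted from the single Yamanouchi-plus-avoidance structure of $Y_n^{(3)}(22)$; once the correspondence of prefix statistics is set up, the remaining verifications are mechanical.
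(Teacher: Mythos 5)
Your proposal is correct, but there is nothing in the paper to compare it against: the statement is quoted from Dulucq and Guibert~\cite{DG1} and the paper supplies no proof of it, so your argument would in fact fill in what the paper delegates to the literature. Your bookkeeping is exactly the right one. For a prefix of $f(\alpha)$ ending at a block boundary, with $A,B,\bar{A},\bar{B}$ the counts of $a,b,\bar{a},\bar{b}$ in the corresponding prefix of $\alpha$, one has $\#1=A+B$, $\#2=B+\bar{A}$, $\#3=\bar{A}+\bar{B}$, hence $\#1-\#2=A-\bar{A}\geq 0$ and $\#2-\#3=B-\bar{B}\geq 0$ by the parenthesis conditions, and content $(n,n,n)$ overall. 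The one place you should be explicit is the prefixes ending \emph{inside} a block, namely just after the $2$ of $\Phi(b)=21$ or of $\Phi(\bar{a})=23$: there $\#1-\#2=A-\bar{A}-1$, so for a $21$ block you need precisely the strict Shuffle condition $A>\bar{A}$ at prefixes ending in $b$ (you do note this translation, though you file it only under the inverse direction), while for a $23$ block the inequality $A>\bar{A}$ holds automatically, since a well-formed $a/\bar{a}$ word can only read $\bar{a}$ when the open count strictly exceeds the closed count. In the inverse direction your greedy parse is unambiguous because no $\Phi$-block contains a $2$ except as its first letter and $22$ is avoided; add the small observation that a word in $Y_n^{(3)}$ cannot end in $2$ (the prefix omitting a final $2$ would have $\#2<\#3$), so every $2$ has a successor letter to consume. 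Recovering the two balance conditions from the vanishing of $\#1-\#2$ and $\#2-\#3$ at the end of the word, and the strictness condition from $\#1\geq\#2$ at each $2$ decoding to $b$, then goes through exactly as you describe, and the block-by-block check that decode and $f$ are mutually inverse is routine.
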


\begin{example}

\[\arraycolsep=1.7pt
\begin{array}{ccccccccccccl}
 & \Phi( & a & b & \bar{b} & b & \bar{a} & a & \bar{a} & a & \bar{b} & \bar{a} & )\\
 & = & 1 & 21 & 3 & 21 & 23 & 1 & 23 & 1 & 3 & 23 & \cr
\end{array}
\]

%\begin{align*}
 %& \Phi(ab\bar{b}b\bar{a}a\bar{a}a\bar{b}\bar{a})\\
 %& = 121321231231323
%\end{align*}

%\begin{align*}
 %& \Phi(\mathbf{a}b\mathbf{\bar{b}}b\mathbf{\bar{a}}a\mathbf{\bar{a}}a\mathbf{\bar{b}}\bar{a})\\
 %& = \mathbf{1}21\mathbf{3}21\mathbf{23}1\mathbf{23}1\mathbf{3}23
%\end{align*}

\end{example}

It is not immediately clear that all of the maps are necessarily equivariant with respect to their natural involutions. We will show that the original bijections of Cori, Dulucq, and Viennot on the objects counted by $c_n^2$ ($\mathrm{AltBax}_{2n}$, $\mathrm{Shuffle}_{2n}$, $\cbt{2n+1}\times\cbt{2n+1}$, $Y_n^{(3)}(22)$)  are equivariant with respect to their involutions. Then an equivariant bijection from the Baxter tableaux (or equivalently, Yamanouchi words in $Y_{n}^{(3)}(11,22,33)$) to $\mathrm{Twin}_n$ is obtained by restricting the equivariant bijection from Yamanouchi words in $Y_{n}^{(3)}(22)$ to all pairs of complete binary trees. 

\begin{figure}[!h]
\centering
\xymatrix @R=1pc @C=2pc{ & \cbt{n}\times\cbt{n} \ar@{<->}[r] ^{\Psi} & \mathrm{AltBax}_{2n} \ar@{<->}[r] ^\beta  & \mathrm{Shuffle}_{2n} \ar@{<->}[r] ^f & Y_n^{(3)}(22)  \\
           \mathrm{Bax}_n \ar@{<->}[r] ^{\mathrm{Extend}^{\times 2}\circ\Psi} & \mathrm{Twin}_n \ar@{<->}[rrr] \ar@{^{(}->}[u] & & & Y_n^{(3)}(11,22,33) \ar@{^{(}->}[u]   }
\caption{Diagram indicating the maps between objects counted by $c_n^2$, and their restriction to Baxter objects.}
\end{figure}

Lastly, we show that the bijection between the $\mathrm{Twin}_n$ and $\mathrm{Bax}_n$ is equivariant, making the composite map from Baxter tableaux to Baxter permutations equivariant.

First, we note that it is trivial to check that the map from Baxter permutations to alternating Baxter permutations of length $2n$ equivariantly takes conjugation by $w_0$ on $S_n$ to conjugation by $w_0$ on $S_{2n}$. The map $\Psi$ sends a Baxter permutation $w$ of length $n$ to a pair of twin trees in $\Twin{n}$, $\left(\mathrm{incr}(w),\mathrm{decr}(w)\right)$, equivariantly. But by Corollary~\ref{altbaxtocbt}, $\Psi^{-1}$ will equivariantly map this to $\Altbax{2n}$. So it suffices to check that the map from $\Altbax{2n}$ to $Y_{n}^{(3)}(22)$ is equivariant.

\begin{prop}
An equivalent formulation for the Baxter condition on permutations of length $n$ says that for every $p\in [n-1]$, we can either write the permutation as $$\pi=\pi' p \pi_{-} \pi_{+} (p+1) \pi''\quad \text{or}\quad \pi=\pi' (p+1) \pi_{+} \pi_{-}  p \pi'',$$ where the (possibly empty) subsequence $\pi_{-}$ (resp. $\pi_{+}$) consists of values less than $p$ (resp. greater than $p+1$).
\end{prop}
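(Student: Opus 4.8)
The plan is to show that the displayed ``local'' condition is equivalent to avoiding the two vincular patterns $2\text{-}41\text{-}3$ and $3\text{-}14\text{-}2$ that define Baxter permutations. Fix $p\in[n-1]$ and let $i$ and $k$ be the positions of the entries $p$ and $p+1$ in $\pi$. Every entry strictly between positions $i$ and $k$ is either \emph{small} (a value $<p$) or \emph{big} (a value $>p+1$), since the only values in $[p,p+1]$ are $p$ and $p+1$ themselves. Reading this factor as a word over $\{\text{small},\text{big}\}$, the two displayed forms say exactly that this word is \emph{sorted}: all smalls precede all bigs when $p$ occurs before $p+1$, and all bigs precede all smalls when $p+1$ occurs before $p$. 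So I must prove that $\pi$ is Baxter if and only if, for every $p$, this intermediate word is sorted.

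For the direction $(\Rightarrow)$ I would argue by contrapositive: if the word for some $p$ is unsorted, then $\pi$ contains a forbidden pattern. Suppose $p$ precedes $p+1$ (positions $i<k$) but a big precedes a small inside the factor. The key elementary step is an \emph{adjacency lemma}: an unsorted $\{\text{small},\text{big}\}$-word contains two \emph{adjacent} letters $\text{big},\text{small}$ in that order; concretely, take the leftmost small that has a big to its left and inspect its immediate predecessor, which minimality forces to be a big. This yields positions $i<m<m+1<k$ with $\pi_m$ big and $\pi_{m+1}$ small, and then the four entries at $i,m,m+1,k$ have values $p,\pi_m,\pi_{m+1},p+1$ of relative ranks $2,4,1,3$ with the $4$ and $1$ adjacent, i.e.\ an occurrence of $2\text{-}41\text{-}3$. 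The case $p+1$ before $p$ is symmetric and produces $3\text{-}14\text{-}2$.

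For the direction $(\Leftarrow)$, again by contrapositive, I would show a forbidden pattern forces the condition to fail for some $p$. Here is the one genuinely subtle point, and the step I expect to be the main obstacle: the outer values of a $2\text{-}41\text{-}3$ occurrence (at positions $i$ and $k$) need not be the consecutive integers $p,p+1$, so one cannot simply read off $p$. To close this gap I would use a \emph{transition} argument. Given an occurrence at positions $i<j<j+1<k$ with $\pi_{j+1}<\pi_i<\pi_k<\pi_j$, the adjacent descent $\pi_j>\pi_{j+1}$ straddles the value interval $[\pi_i,\pi_k]$, and every value strictly between $\pi_{j+1}$ and $\pi_j$ sits at a position either left of $j$ (``L'') or right of $j+1$ (``R''), since positions $j,j+1$ are occupied. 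The value $\pi_i$ is L and $\pi_k$ is R, so scanning the consecutive integers from $\pi_i$ up to $\pi_k$ there must be a first jump from L to R: a pair $p,p+1$ with $\mathrm{pos}(p)<j<j+1<\mathrm{pos}(p+1)$. Thus $p$ precedes $p+1$, yet inside their factor the big value $\pi_j$ (which exceeds $p+1$) precedes the small value $\pi_{j+1}$ (which lies below $p$), so the word is unsorted. The pattern $3\text{-}14\text{-}2$ is handled by the mirror-image L/R scan, producing a pair with $p+1$ before $p$.

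Finally I would assemble the two directions: the absence of an unsorted word for every $p$ is equivalent to the absence of either vincular pattern, which is precisely the Baxter condition. The routine parts are the pattern-inequality verifications in each case; the only part requiring real care is the L/R transition argument that manufactures consecutive integers $p,p+1$ out of the arbitrary outer values of a pattern occurrence.
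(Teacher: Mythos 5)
Your proof is correct, and it is more than the paper supplies: the paper simply states ``The proof of this is straightforward, and left to the reader,'' so there is no official argument to compare against. Your write-up fills the gap completely and correctly identifies where the real content lies. The forward direction is indeed routine once you observe that every entry strictly between $p$ and $p+1$ is automatically small or big, and your adjacency lemma (the leftmost offending small must have a big as its immediate predecessor) correctly produces the adjacent pair needed to satisfy the \emph{vincular} adjacency requirement in $2\text{-}41\text{-}3$, resp.\ $3\text{-}14\text{-}2$ --- this is exactly the point where the vincular dash-pattern condition, rather than classical pattern containment, must be used. The converse is the genuinely nontrivial step, since the outer entries $\pi_i,\pi_k$ of a pattern occurrence need not be consecutive values, and your L/R transition scan over the value interval $[\pi_i,\pi_k]$ (noting that no value in that interval can occupy positions $j$ or $j+1$, so the classification is total, and taking the first L-to-R jump) is the standard and correct way to manufacture a consecutive pair $p,p+1$ straddling the adjacent descent; the inequalities $\pi_j>\pi_k\geq p+1$ and $\pi_{j+1}<\pi_i\leq p$ then verify strictly that $\pi_j$ is big and $\pi_{j+1}$ is small for this $p$, so the displayed form fails. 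All cases, including the mirror scan for $3\text{-}14\text{-}2$, check out.
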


\begin{example}
For $\pi=5671342\in\mathrm{Bax}_7$ and $p=4$, we have $\pi'=\emptyset$, $\pi_{+}=67$, $\pi_{-}=13$, and $\pi''=2$.
\end{example}

The proof of this is straighforward, and left to the reader.

This allows us to construct a map from $\mathrm{AltBax}_{2n}$ to $\mathrm{Shuffle}_{2n}$, which is in fact the bijection referred to in Theorem~\ref{chartproof}.

\begin{definition}
\label{def12}
Let $\beta:\mathrm{AltBax}_{2n}\mapsto\mathrm{Shuffle}_{2n}$ be the map defined as follows:

Given a $\pi\in\mathrm{AltBax}_{2n}$, for each $p\in [2n-1]$, look at the relative order of $p$ and $p+1$, whether $\pi_{-}$ is empty, and whether $\pi_{+}$ is empty. We call this triple of information the $\emph{type}$ of $p$ (with respect to $\pi$). If $\beta(\pi)=\alpha$, for each of these 8 types, there are two possible strings of length two that $\alpha_p\alpha_{p+1}$ could be, listed in the figure below. Starting with $\alpha_1=a$, we can resursively construct $\alpha$ by noting that only one of the two choices for $\alpha_p\alpha_{p+1}$ will be consistent with what we already know $\alpha_p$ must be.
\end{definition}

\begin{figure}[!h]
\label{chart}
\centering
\begin{tabular}{|c|cccccc|c|c|}
\hline
Type & \multicolumn{6}{c|}{$\pi$} & $\alpha_p\alpha_{p+1}$  \\
\hline
\hline
Type 1 & $\pi'$ & $p$ & $\emptyset$ & $\emptyset$ & $p+1$ & $\pi''$ & $a\bar{a}$ or $b\bar{a}$ \\
\hline

Type 2 & $\pi'$ & $p$ & $\emptyset$ & $\pi_{+}$ & $p+1$ & $\pi''$ & $ab$ or $bb$ \\
\hline

Type 3 & $\pi'$ & $p$ & $\pi_{-}$ & $\emptyset$ & $p+1$ & $\pi''$ & $\bar{a}\bar{a}$ or $\bar{b}\bar{a}$\\
\hline

Type 4 & $\pi'$ & $p$ & $\pi_{-}$ & $\pi_{+}$ & $p+1$ & $\pi''$ & $\bar{a}b$ or $\bar{b}b$ \\
\hline

Type 5 & $\pi'$ & $p+1$ & $\emptyset$ & $\emptyset$ & $p$ & $\pi''$ & $a\bar{b}$ or $b\bar{b}$ \\
\hline

Type 6 & $\pi'$ & $p+1$ & $\emptyset$ & $\pi_{-}$ & $p$ & $\pi''$ & $\bar{a}\bar{b}$ or $\bar{b}\bar{b}$\\
\hline

Type 7 & $\pi'$ & $p+1$ & $\pi_{+}$ & $\emptyset$ & $p$ & $\pi''$ & $aa$ or $ba$ \\
\hline

Type 8 & $\pi'$ & $p+1$ & $\pi_{+}$ & $\pi_{-}$ & $p$ & $\pi''$ & $aa$ or $ba$ \\
\hline

\end{tabular}

\caption{Relations between $\pi\in\Altbax{2n}$, $\alpha\in\mathrm{Shuffle}_{2n}$, $\mathrm{incr}(\pi)$, and $\mathrm{decr}(\pi)$.}
\end{figure}

\begin{example}
As a working example, we will start with $\pi=2314\in\mathrm{AltBax}_{4}$. 

For $p=1$, we see that $p+1$ occurs before $p$, $\pi_{+}$ is non-empty, and $\pi_{-}$ is empty. This means $p=1$ is type 7, and so $\alpha_1\alpha_2$ is either $aa$ or $ba$. But since a shuffle word has to start with $a$, we know $\alpha_1\alpha_2=aa$.

For $p=2$, we see that $p$ occurs before $p+1$, and that $\pi_{+}$ and $\pi_{-}$ are both empty. This means $p=2$ is type 1, and so $\alpha_2\alpha_3$ is either $a\bar{a}$ or $b\bar{a}$. Only the first case is consistent with us previously finding $\alpha_2=a$, so $\alpha_3=\bar{a}$.

For $p=3$, we see that $p$ occurs before $p+1$, $\pi_{-}$ is non-empty, and $\pi_{+}$ is empty. This mean $p=3$ is type 3, and so $\alpha_3\alpha_4$ is either $\bar{a}\bar{a}$ or $\bar{b}\bar{a}$. Only the first choice is consistent with $\alpha_3=\bar{a}$, so $\alpha_4=\bar{a}$.

Thus, we see that $\beta(2314)=aa\bar{a}\bar{a}$.

\end{example}

Since this map is a bijection, as knowing the type for each $\alpha_p\alpha_{p+1}$ uniquely determines $\alpha$, knowing the type for each $p$ is enough to recover what the original alternating Baxter permutation is.

Thus, we will find it convenient to encode elements of $\mathrm{AltBax}_{2n}$ and $\mathrm{Shuffle}_{2n}$ as words of length $n$ on the letter set $\{1,2\ldots,8\}$, where the $p^{th}$ letter indicates the type of $p$ in $w$ (resp. the type of $\alpha_p\alpha_{p+1}$).

\begin{example}
For $\pi=2314$, as $p=1$ was the type 7, $p=2$ was the type 1, and $p=3$ was the type 3, we would encode this element as $713$.
\end{example}

\begin{theorem}
The bijection between $\Altbax{2n}$ and $Y_{n}^{(3)}(22)$ is equivariant with respect to conjugation by $w_0$ and evacuation.
\end{theorem}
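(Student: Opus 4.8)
The plan is to reduce both involutions to a single operation on the length-$(2n-1)$ type words and check that conjugation by $w_0$ and evacuation induce the same map. Recall that $\pi\in\Altbax{2n}$, its image $\beta(\pi)\in\mathrm{Shuffle}_{2n}$, and hence $f(\beta(\pi))\in Y_n^{(3)}(22)$ are all recorded by the same word $t=t_1\cdots t_{2n-1}$ over $\{1,\dots,8\}$, with $t_p$ the type of $p$. So it suffices to compute, for each involution, the induced transformation of $t$ and to see that they coincide.

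First I would compute conjugation. With $\widehat\pi=w_0\pi w_0$ one has $\widehat\pi_i=2n+1-\pi_{2n+1-i}$, so the pair $\{p,p+1\}$ is carried to $\{q,q+1\}$ with $q=2n-p$, the relative order of the two values is unchanged, and a value below $p$ (resp.\ above $p+1$) lying between their occurrences becomes a value above $q+1$ (resp.\ below $q$). Feeding this into the Baxter-decomposition reformulation, the blocks $\pi_-$ and $\pi_+$ exchange roles while the $p$-before-$(p+1)$ alternative is preserved; reading this off the type table shows that conjugation reverses $t$ and applies the fixed involution $\sigma=(2\,3)(6\,7)$ to its letters (types $1,4,5,8$ fixed). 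As a check, $\pi=2314$ has $t=713$, while $w_0\pi w_0=1423$ has $t=216=\sigma(\mathrm{rev}(713))$.

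Next I would show that evacuation induces the very same operation. Working with $y=f(\beta(\pi))$, the shape of $\Phi$ is the crux: every $2$ begins a block $\Phi(b)=21$ or $\Phi(\bar a)=23$, hence is preceded in $y$ by a $1$ or a $3$; after evacuation, which reverses $y$ and applies the row-complement $1\leftrightarrow 3$ (fixing $2$, so the no-$22$ condition is preserved), every $2$ is instead \emph{followed} by a $1$ or a $3$, so $\mathrm{evac}(y)$ reparses uniquely into $\Phi$-blocks and is $f$ of a genuine shuffle word. Recording, at the $p$-th block boundary, the last symbol of block $p$ together with block $p+1$ gives the local patterns $123,121,323,321,13,33$ for types $1$--$6$, and reverse-complementing these returns $123,323,121,321,13,11$; combined with the reversal $p\mapsto 2n-p$ this matches $\sigma$ on types $1$--$5$ and sends type $6$ into the common $\{7,8\}$ pattern $11$.

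The hard part is the class of types $7$ and $8$, which share the local pattern $11$: reverse-complementing $11$ yields $33$, the pattern of type $6$, so locally the classes $\{6\}$ and $\{7,8\}$ are interchanged, but the local data alone cannot sort $6\leftrightarrow 7$ from $6\leftrightarrow 8$ nor fix the image of $8$. The resolution is that the reparsing after evacuation is not boundary-preserving---the block boundaries shift---so one must instead track the global nesting of the $a,\bar a$ and $b,\bar b$ parenthesis systems, which is exactly the data (emptiness of $\pi_-$) distinguishing types $7$ and $8$. Reverse-complement interchanges openers and closers while preserving this matching, and carrying it through gives $6\mapsto 7$, $7\mapsto 6$, and $8\mapsto 8$. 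Hence evacuation also acts by $\mathrm{rev}+\sigma$; since $f\circ\beta$ preserves the type-word indexing and both involutions induce $\mathrm{rev}+\sigma$, the bijection commutes with them, as illustrated by $112323\mapsto 121233$ for $n=2$.
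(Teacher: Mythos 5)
Your overall strategy coincides with the paper's: encode both involutions as operations on the length-$(2n-1)$ type word, observe that conjugation by $w_0$ acts by reversal composed with the letter permutation $(2\,3)(6\,7)$, and analyze evacuation through the correspondence between the letters of $\alpha$ and the $2n$ instances of $1$ and $3$ in $f(\alpha)$ together with their preceded-by-$2$ status. Your ``local pattern'' computation is exactly the content of the paper's two propositions for this theorem: the $1$/$3$ value of $\hat{\alpha}_{2n+1-p}$ is complementary to that of $\alpha_p$, and its preceded-by-$2$ status equals that of $\alpha_{p+1}$; your worry that the block boundaries shift under reparsing is a red herring, since the pattern spanning the $p$-th boundary (last letter of block $p$, an optional $2$, then instance $p+1$) is carried precisely to the reverse-complement pattern at the corresponding boundary.

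The genuine gap is your treatment of types $6$, $7$, $8$. Your premise that types $7$ and $8$ share the local pattern $11$ comes from taking the chart at face value, but the type-$8$ row as printed is a typo: it should read $\bar{a}a$ or $\bar{b}a$. (With the printed row, $\beta$ is not even well defined: $\pi=3412\in\Altbax{4}$ has type word $181$, and after the type-$1$ step forces $\alpha_1\alpha_2=a\bar{a}$, neither $aa$ nor $ba$ is consistent with $\alpha_2=\bar{a}$; the corrected row yields $\alpha=a\bar{a}a\bar{a}$, which checks out, as $3412$ and $f(\alpha)=123123$ are both fixed by their involutions.) With the correction, type $8$ has local pattern $31$, the eight patterns $123$, $121$, $323$, $321$, $13$, $33$, $11$, $31$ are pairwise distinct, and reverse-complement permutes them exactly by $(2\,3)(6\,7)$ with $1,4,5,8$ fixed, so the uniform local check settles every case --- no global argument is needed. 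Your actual workaround, ``track the global nesting of the parenthesis systems \ldots carrying it through gives $6\mapsto 7$, $7\mapsto 6$, $8\mapsto 8$,'' is an assertion rather than an argument: the claimed preservation of the matching is never carried through, and this is precisely the case the proof was obliged to establish. The outcome you state is correct (it agrees with the paper's correspondence $1,3,2,4,5,7,6,8$), but as written the decisive case of your case analysis is unproven.
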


\begin{proof}

The bijection of Dulucq and Guibert from $\mathrm{AltBax}_{2n}$ to $Y_n^{(3)}(22)$ is a composition of a map $\beta$ from $\mathrm{AltBax}_{2n}$ to $\mathrm{Shuffle}_{2n}$ and the previously defined $f$ from $\mathrm{Shuffle}_{2n}$ to $Y_n^{(3)}(22)$. So we need to show that if $f(\beta(w))=x$, then $f(\beta(w_0 w w_0))=\mathrm{evac}(x)$.

We note that the elements in the intermediate set, $\mathrm{Shuffle}_{2n}$, have no natural involution associated to them. However, we can define an involution on $\mathrm{Shuffle}_{2n}$ by mapping it bijectively to a set with a natural involution, doing an involution there, and then mapping it back.

This gives us two possible ways of defining an involution on $\mathrm{Shuffle}_{2n}$ that are not obviously the same. One option is $\alpha\mapsto\beta(w_0 \beta^{-1}(\alpha) w_0)$, induced from $\mathrm{AltBax}_{2n}$. The other is $\alpha\mapsto f^{-1}(\mathrm{evac}(f(\alpha))$, induced from $Y_n^{(3)}(22)$. Proving equivariance is equivalent to showing that these two induced involutions are the same.

First, we describe the involution on $\mathrm{Shuffle}_{2n}$ induced from conjugation by $w_0$ on the alternating Baxter permutations. 

If we know what type $p$ is in the original word $\pi$, we can readily figure out the type of $n-p$ in the involuted word, $\hat{\pi}=w_0 \pi w_0$, as:

\begin{itemize}
\item $p$ appears before $p+1$ in $\pi$ iff $n-p$ appears before $n+1-p$ in $\hat{\pi}$.
\item $\pi_{-}$ is empty iff $\hat{\pi}_{+}$ is empty.
\item $\pi_{+}$ is empty iff $\hat{\pi}_{-}$ is empty. 
\end{itemize}

Thus, if $p$ is of type 1,2,3,4,5,6,7,8 in the original word, then $n-p$ will be of type 1,3,2,4,5,7,6,8 (respectively)

This means that the involution on $\mathrm{Shuffle}_{2n}$ induced from $\mathrm{AltBax}_{2n}$ corresponds to reversing the encoded word, swapping $2$'s and $3$'s, and swapping $7$'s and $6$'s.

\begin{example}
The encoded word for $\pi=2314$ is $713$, so the encoded word for $w_0 \pi w_0 = 1423$ should be $216$. Sure enough, in $1423$, $p=1$ is of type $2$, $p=2$ is of type $1$, and $p=3$ is of type $6$.
\end{example}

Now, we consider the relationship between $\mathrm{Shuffle}_{2n}$ and $Y_n^{(3)}(22)$. Say we have $f(\alpha)=a$. Each of the $2n$ letters of $\alpha$ corresponds to one of the $2n$ instances of $1$ and $3$ in $a$, and additionally keeps track of whether or not that instance of $1$ or $3$ is preceded by a $2$.

%So say that $\alpha_p$ is $a$ or $b$ (resp. $\bar{a}$ or $\bar{b}$). Then the $p^{th}$ occurrence of either a $1$ or a $3$ in $a$ is a $1$ (resp. $3$).

Let $\hat{\alpha}=f^{-1}(\mathrm{evac}(f(\alpha))$, representing the involution on $\mathrm{Shuffle}_{2n}$ induced by $Y^{(3)}(22)$.

\begin{prop}
\label{Prop1}
$\hat{\alpha}_{2n+1-p}$ corresponds to a $1$ (resp. $3$) if and only if $\alpha_p$ corresponds to a $3$ (resp. $1$).
\end{prop}

\begin{proof}
Doing evacuation on a Yamanouchi word corresponds to reversing the word, and swapping $1$'s and $3$'s. So if the $p^{th}$ occurrence of either a $1$ or a $3$ in $x\in Y^{(3)}(22)$ is a $1$ (resp. $3$) the $(2n+1-p)^{th}$ occurence of either a $1$ or a $3$ in $\mathrm{evac}(x)$ will be a $3$ (resp. $1$). So if $\hat{\alpha}=f^{-1}(\mathrm{evac}(f(\alpha))$, then $\hat{\alpha}_{2n+1-p}$ will be either $\bar{a}$ or $\bar{b}$ (resp. $a$ or $b$).
\end{proof}

\begin{prop}
\label{Prop2}
The $1$ or $3$ in $\hat{\alpha}_{2n+1-p}$ will be preceded by a $2$ if and only if $\alpha_{p+1}$ corresponds to something that is preceded by a $2$.
\end{prop}

\noindent This easily follows from the fact that evacuation reverses the word.

Say we know whether $\alpha_p$ corresponds to a $1$ or a $3$, and what $\alpha_{p+1}$ is. There are $8$ different cases, and for each case there are two possibilities for what $\alpha_p \alpha_{p+1}$ could be (depending on whether or not the $1$ or $3$ in $\alpha_p$ is preceded by a $2$ or not). One can see that these are exactly the $8$ different types from Figure~\ref{chart}.

\begin{example}
Say we know that $\alpha_p$ corresponds to a $1$, $\alpha_{p+1}$ corresponds to a $1$, and $\alpha_{p+1}$ is preceded by a $2$. Then $\alpha_p\alpha_{p+1}$ could be $ab$ or $bb$, corresponding to type 2.
\end{example}

By Proposition~\ref{Prop1}, we can determine whether $\hat{\alpha}_{2n-p}$ and $\hat{\alpha}_{2n+1-p}$ correspond to $1$'s or $3$'s. By Proposition~\ref{Prop2}, we can also determine whether $\hat{\alpha}_{2n+1-p}$ is preceded by a $2$ or not. Thus, we can determine which of the 8 different cases from Figure~\ref{chart} $\hat{\alpha}_{2n-p}\hat{\alpha}_{2n+1-p}$ corresponds to. One can check case-by-case that we get the same correspondence as before.

\begin{example}
Say we know that $\alpha_p\alpha_{p+1}$ is of type $2$ as in the previous example. Then $\alpha_p$ corresponding to a $1$ means $\hat{\alpha}_{2n+1-p}$ corresponds to a $3$. And $\alpha_{p+1}$ corresponding to a $1$ means $\hat{\alpha}_{2n-p}$ corresponds to a $3$. Finally, $\alpha_{p+1}$ being preceded by a $2$ means $\hat{\alpha}_{2n+1-p}$ is preceded by a $2$. Thus, $\hat{\alpha}_{2n-p}\hat{\alpha}_{2n+1-p}$ could be $\bar{a}\bar{a}$ or $\bar{b}\bar{a}$, corresponding to type $3$.
\end{example}

%\begin{example}
%In our example, we have $f(aa\bar{a}\bar{a})=112323$. Our shuffle word contains the substring $a\bar{a}$, which corresponds to the substring 123 in the middle of the Yamanouchi word. When we do evacuation on the Yamanouchi word, we get $\mathrm{evac}(112323)=121233$, and the substring of 123 gets mapped to the substring 123. But when we go back to shuffle words, we don't know if this substring of 123 will be encoded as $a\bar{a}$ or as part of $b\bar{a}$ without knowing what came after $a\bar{a}$ in the original shuffle word.
%\end{example}

\end{proof}

\begin{corollary}
The bijection between $\mathrm{Twin}_n$ and $\mathrm{Bax}_n$ is equivariant.
\end{corollary}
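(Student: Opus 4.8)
The plan is to present the bijection between $\Bax_n$ and $\Twin{n}$ as the composite $\mathrm{Extend}^{\times 2}\circ\Psi$, and to deduce its equivariance directly from the description of $\Psi$ in terms of increasing and decreasing trees. The two involutions are conjugation by $w_0$ on $\Bax_n$ and the mirror-and-swap involution of Definition~\ref{treeinvdef} on $\Twin{n}$. First I would observe that conjugating $w$ by $w_0$ simultaneously reverses the sequence of positions and replaces each value $v$ by $n+1-v$; this interchanges minima with maxima and turns every left-child relation into a right-child relation. Consequently $\mathrm{incr}(w_0 w w_0)$ is the mirror reflection of $\mathrm{decr}(w)$ and $\mathrm{decr}(w_0 w w_0)$ is the mirror reflection of $\mathrm{incr}(w)$, so $\Psi(w_0 w w_0)$ is exactly the image of $\Psi(w)$ under mirror-and-swap. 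Since $\mathrm{Extend}$ commutes with both truncation and mirroring, this equivariance passes from $\widetilde{\Twin{n}}$ to $\Twin{n}$ without change.

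With that leg in place, I would close the diagram using the results already obtained. The map $\Bax_n\to\Altbax{2n}$ intertwines conjugation by $w_0$ on the two symmetric groups (immediate from the definitions), and by Corollary~\ref{altbaxtocbt} together with the equivariance of $\Psi$ the bijection $\cbt{2n+1}\times\cbt{2n+1}\leftrightarrow\Altbax{2n}$ is equivariant; combined with the Theorem just proved this makes the entire top row of the diagram intertwine its involutions. Restricting along the inclusions $\Twin{n}\hookrightarrow\cbt{2n+1}\times\cbt{2n+1}$ and $Y_n^{(3)}(11,22,33)\hookrightarrow Y_n^{(3)}(22)$, each of which carries the ambient involution to its restriction, then yields equivariance of $\Twin{n}\leftrightarrow Y_n^{(3)}(11,22,33)$ and hence compatibility of the $\Bax_n\leftrightarrow\Twin{n}$ bijection with the whole chain down to the Baxter tableaux.

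The hard part is the bookkeeping that identifies the two a priori distinct involutions living on $\Twin{n}$: the mirror-and-swap involution coming from $\Bax_n$, and the one transported by evacuation from $Y_n^{(3)}(11,22,33)$ through the top row. Proving that these agree is precisely what forces the diagram to commute equivariantly, and I expect it to reduce to checking that mirror-and-swap acts, at the level of $\code$, by reversing the leaf word and interchanging $0$'s and $1$'s --- the same reverse-and-complement symmetry that evacuation induces on Yamanouchi words in the proof of the preceding Theorem. Once that correspondence is verified, the equivariance asserted by the Corollary follows with no case analysis beyond what has already been carried out.
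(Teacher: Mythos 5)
Your proposal is correct and follows essentially the same route as the paper, which obtains this corollary by combining exactly the ingredients you assemble: the equivariance of $\Psi$ (the paper's earlier proposition, declared obvious from the increasing/decreasing-tree definition, and which your mirror-and-swap computation of $\mathrm{incr}(w_0ww_0)$ and $\mathrm{decr}(w_0ww_0)$ spells out), Corollary~\ref{altbaxtocbt} to pass equivariantly to $\Altbax{2n}$, the theorem that $\Altbax{2n}\leftrightarrow Y_n^{(3)}(22)$ is equivariant, and restriction along the involution-stable inclusions. The ``hard part'' you flag---that mirror-and-swap acts on $\code$ by reversal and $0/1$-complementation, matching evacuation's reverse-complement action on Yamanouchi words---is the routine check the paper leaves implicit in Definition~\ref{treeinvdef}, and your sketch of it is accurate.
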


\begin{corollary}
Let $n=k+\ell+1$. The bijection from ($k$,$\ell$)-Baxter permutations to ($k$,$\ell$)-Baxter tableaux  is equivariant with respect to conjugation by $w_0$ and evacuation.
\end{corollary}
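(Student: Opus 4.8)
The plan is to deduce this corollary entirely by composing equivariance statements already established, so that the only genuinely new content is a pair of closure checks. By construction (the commuting diagram above), the bijection from $(k,\ell)$-Baxter permutations to $(k,\ell)$-Baxter tableaux factors as
$$\mathrm{Bax}_n \xrightarrow{\ \Psi\ } \mathrm{Twin}_n \hookrightarrow \cbt{2n+1}\times\cbt{2n+1} \xrightarrow{\ f\circ\beta\circ\Psi^{-1}\ } Y_n^{(3)}(22),$$
and I would first record that this composite carries $\mathrm{Twin}_n$ exactly onto the subset $Y_n^{(3)}(11,22,33)$ of Baxter tableaux, which is precisely the restriction indicated by the vertical inclusions in the diagram.

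Second, I would assemble the equivariance of each factor. The map $\mathrm{Bax}_n\to\mathrm{Twin}_n$, and likewise the identification $\mathrm{AltBax}_{2n}\leftrightarrow\cbt{2n+1}\times\cbt{2n+1}$ of Corollary~\ref{altbaxtocbt}, intertwine conjugation by $w_0$ with the tree involution, by the Proposition asserting that $\Psi$ is equivariant. The remaining bijection $\mathrm{AltBax}_{2n}\to Y_n^{(3)}(22)$ intertwines conjugation by $w_0$ with evacuation, by the Theorem proved just above. Since a composite of equivariant maps is equivariant, the full composite is equivariant with respect to conjugation by $w_0$ on $\mathrm{Bax}_n$ and evacuation on $Y_n^{(3)}(22)$.

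Third --- the only point needing care --- I would check that passing to the invariant subsets is legitimate. For an equivariant bijection $F\colon X\to X'$ between sets carrying involutions $\iota,\iota'$, the restriction $F|_S\colon S\to F(S)$ is again equivariant as soon as $S$ is $\iota$-invariant, since then $F(S)$ is automatically $\iota'$-invariant. I would therefore verify the two relevant closure facts: that $\mathrm{Twin}_n$ is stable under the tree involution (immediate, since mirroring and swapping a twin pair reverses and $0\text{-}1$-interchanges each leaf code in a way that again satisfies the twin compatibility relation), and that $Y_n^{(3)}(11,22,33)$ is stable under evacuation (since evacuation reverses a Yamanouchi word and interchanges $1$'s and $3$'s while fixing $2$'s, it can never produce a consecutive $11$, $22$, or $33$ out of a word that had none). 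Given these, the equivariance of the ambient bijection descends to the desired restriction.

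The main obstacle is exactly this last verification --- confirming that both intermediate subsets are genuinely closed under their respective involutions and that the large bijection sends $\mathrm{Twin}_n$ onto $Y_n^{(3)}(11,22,33)$; once that is in place, everything reduces to the formal composition of the results proved above. I would finish by observing that the statistics $k$ (ascents) and $\ell$ (descents) are tracked consistently along the chain via the left/right leaf counts recorded earlier, and are each preserved by conjugation by $w_0$ and by evacuation, so the equivariance restricts correctly to objects of fixed order $(k,\ell)$.
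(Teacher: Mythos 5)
Your proposal is correct and takes essentially the same route as the paper, which obtains this corollary with no separate argument by composing the equivariance of $\Psi$ (from $\mathrm{Bax}_n$ through $\mathrm{Twin}_n$ to $\mathrm{AltBax}_{2n}$, via the earlier Proposition and Corollary) with the just-proved equivariance of the Dulucq--Guibert bijection $\mathrm{AltBax}_{2n}\to Y_n^{(3)}(22)$, and then restricting to the invariant subsets $\mathrm{Twin}_n$ and $Y_n^{(3)}(11,22,33)$. Your explicit closure checks (the twin condition is preserved under mirror-and-swap, and evacuation preserves avoidance of $11$, $22$, $33$) and the bookkeeping of the $(k,\ell)$ statistics are exactly the details the paper leaves implicit, so the two arguments coincide.
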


\begin{remark}
Although it is not our primary interest, the following corollary also allows us to count how many alternating Baxter permutations of length $2n$ and standard $3\times n$ Young tableaux avoiding 22 are fixed under evacuation.
\end{remark}

\begin{corollary}
 The number of alternating Baxter permutations of length $2n$ fixed under conjugation by $w_0$ and the number of $3\times n$ standard Young tableaux with no consecutive entries in the middle row fixed under evacuation are both equal to $c_n$, the Catalan number.
\end{corollary}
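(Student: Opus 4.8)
The plan is to reduce the count to the most transparent of the four equivalent models and use the equivariance already established. By the theorem immediately preceding this corollary, the bijection between $\mathrm{AltBax}_{2n}$ and $Y_n^{(3)}(22)$ intertwines conjugation by $w_0$ with evacuation; combined with Corollary~\ref{altbaxtocbt} and the proposition that $\Psi$ is equivariant, we obtain a chain of involution-preserving bijections linking $\mathrm{AltBax}_{2n}$, $\cbt{2n+1}\times\cbt{2n+1}$, $\mathrm{Shuffle}_{2n}$, and $Y_n^{(3)}(22)$. Since an equivariant bijection carries fixed points to fixed points, all four classes have the same number of objects fixed by their respective involutions. In particular the two quantities named in the corollary agree, so it suffices to compute the fixed-point count in whichever model is cleanest, which is the model of pairs of complete binary trees.

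Next I would count fixed points of the involution on $\cbt{2n+1}\times\cbt{2n+1}$. Recall from Definition~\ref{treeinvdef} that this involution sends $(T_L,T_R)$ to $(\overline{T_R},\overline{T_L})$, where $\overline{T}$ denotes the mirror reflection of $T$. A pair is fixed exactly when $T_L=\overline{T_R}$ (equivalently $T_R=\overline{T_L}$, since mirroring is itself an involution). Thus every fixed pair has the form $(\overline{T},T)$ for a uniquely determined $T\in\cbt{2n+1}$, and conversely each $T$ yields such a fixed pair; the assignment $T\mapsto(\overline{T},T)$ is therefore a bijection from $\cbt{2n+1}$ onto the set of fixed pairs. (Here one uses that the mirror reflection of a complete binary tree on $2n+1$ nodes is again such a tree.)

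Finally I would identify $|\cbt{2n+1}|$ with $c_n$: the map $\mathrm{Trunc}$ is a bijection from $\cbt{2n+1}$ to $\bt{n}$, the set of binary trees on $n$ nodes, and these are counted by the Catalan number $c_n$. Combining the previous step, the number of fixed pairs is exactly $c_n$, and transporting this along the equivariant bijections gives the same count for alternating Baxter permutations of length $2n$ fixed by conjugation with $w_0$ and for $3\times n$ standard Young tableaux avoiding consecutive middle-row entries fixed by evacuation.

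There is no substantial obstacle here; the argument is a short deduction once the machinery is in place. The only point requiring care is bookkeeping: one must confirm that the fixed-point count genuinely transports through each link of the chain, i.e. that $\Psi$ (via Corollary~\ref{altbaxtocbt}) really sends conjugation by $w_0$ to the mirror-and-swap involution and that the preceding theorem's equivariance is used in the correct direction, so that the clean count on trees is legitimately the count for $\mathrm{AltBax}_{2n}$ and for $Y_n^{(3)}(22)$.
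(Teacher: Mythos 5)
Your proposal is correct and follows essentially the same route as the paper: both reduce via the equivariant bijections to counting pairs of complete binary trees fixed under the mirror-and-swap involution, observe that such a fixed pair is determined by a single tree paired with its reflection, and conclude the count is $c_n$ since complete binary trees on $2n+1$ nodes are Catalan-many. Your explicit verification that $T\mapsto(\overline{T},T)$ is a bijection onto the fixed set, and the identification $|\cbt{2n+1}|=|\bt{n}|=c_n$ via $\mathrm{Trunc}$, simply spell out details the paper leaves implicit.
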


\begin{proof}We have an equivariant bijection between pairs of complete binary trees and these two objects, so it suffices to count how many pairs of complete binary trees are fixed under their involution. The involution on pairs of trees is given by reflecting each tree horizontally and then swapping the order of the pair. So a pair fixed under involution is completely determined by the first tree, and it is clear that each complete binary tree yields a pair fixed under involution (by pairing a tree $T$ with a reflected copy of itself). So there are as many pairs of complete binary trees fixed under involution as there are pairs of complete binary trees, of which there are $c_n$.

\end{proof}

In Figure~\ref{altbaxmap}, one can see that the original alternating Baxter permutation is fixed under conjugation by $w_0$, and that the right tree is the mirror image of the left tree.

 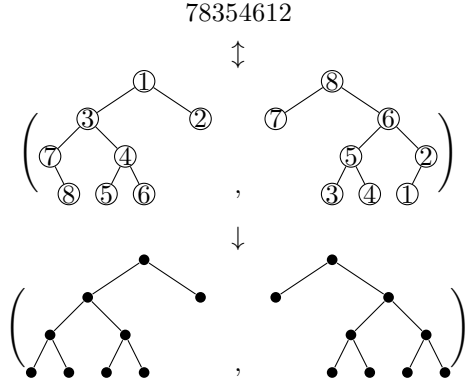
\begin{figure}
 \centering
 $\begin{array}{c}
 78354612 \\
 \updownarrow \\
 \raisebox{\height}{$\Bigg($}
 \begin{tikzpicture}
 [inner sep=0mm, level distance=5mm, level 3/.style={sibling distance=5mm}, level 2/.style={sibling distance=10mm}] 
    \node[circle,draw] {1}  	
      child {node[circle,draw] {3}
 	child {node[circle,draw] {7}
 	  child[missing] {node[circle,draw] {9}}
 	  child {node[circle,draw] {8}}
 	}
 	child {node[circle,draw] {4}
 	  child {node[circle,draw] {5}}
 	  child {node[circle,draw] {6}}
 	}
     }
      child {node[circle,draw] {2}
      };
 \draw (1.25,-1.5) node {,};
 \begin{scope}[xshift=25mm]
 [inner sep=0mm, level distance=5mm, level 3/.style={sibling distance=5mm}, level 2/.style={sibling distance=10mm}, level 1/.style{sibling distance=10mm},xshift=100cm] 
    \node[circle,draw] {8}  	
      child {node[circle,draw] {7}}
 	child {node[circle,draw] {6}
 	  child {node[circle,draw] {5}
 	    child{node[circle,draw] {3}}
 	    child{node[circle,draw] {4}}
 	  }
 	  child {node[circle,draw] {2}
 	    child {node[circle,draw] {1}}
 	    child[missing] {node {0}}
 	  }
 	};
 \end{scope}
 \end{tikzpicture}
 \raisebox{\height}{$\Bigg)$}\\
 \downarrow \\
 \raisebox{\height}{$\Bigg($}
 \begin{tikzpicture}
 [inner sep=.5mm, level distance=5mm, level 3/.style={sibling distance=5mm}, level 2/.style={sibling distance=10mm}] 
    \node[fill,circle] {}  	
      child { node[fill,circle] {}
 	child { node[fill,circle] {}
 	  child { node[fill,circle] {} }
 	  child { node[fill,circle] {} }
 	}
 	child { node[fill,circle] {}
 	  child { node[fill,circle] {} }
 	  child { node[fill,circle] {} }
 	}
      }
      child { node[fill,circle] {}
      };
 \draw (1.25,-1.5) node {,};
 \begin{scope}[xshift=25mm]
 [inner sep=0mm, level distance=5mm, level 3/.style={sibling distance=5mm}, level 2/.style={sibling distance=10mm}, level 1/.style{sibling distance=10mm},xshift=100cm] 
    \node[fill,circle] {}  	
      child { node[fill,circle] {} }
 	child { node[fill,circle] {}
 	  child { node[fill,circle] {}
 	    child{ node[fill,circle] {} }
 	    child{ node[fill,circle] {} }
 	  }
 	  child { node[fill,circle] {}
 	    child { node[fill,circle] {} }
 	    child { node[fill,circle] {} }
 	  }
 	};
 \end{scope}
 \end{tikzpicture}
 \raisebox{\height}{$\Bigg)$}\\
 \end{array}$
 \caption{Map from alternating Baxter permutations of length $2n$ to pairs of complete binary trees with $2n+1$ nodes.}
 \label{altbaxmap}
 \end{figure}

\subsection{Objects (A) and (B)}

Next, we show the equivalence of objects (A) and (B)., using the following fact.

\begin{prop}[Law and Reading~\cite{LR}, Corollary 4.2]

A permutation $w$ lies in $\mathrm{Bax}_n$ if and only if $w^{-1}$ lies in $\mathrm{Bax}_n$.
\end{prop}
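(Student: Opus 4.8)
The plan is to exploit the fact that taking the inverse of a permutation transposes its diagram, and hence interchanges adjacency of \emph{positions} with adjacency of \emph{values}. The patterns $3$-$14$-$2$ and $2$-$41$-$3$ defining $\Bax_n$ are position-vincular: the undashed pair (the ``$14$'', resp.\ the ``$41$'') must occupy two consecutive positions. I would pair this observation with the value-adjacency reformulation of the Baxter condition recorded in the earlier Proposition, namely that for each $p$ one can write $\pi=\pi' p \pi_{-}\pi_{+}(p+1)\pi''$ or $\pi=\pi'(p+1)\pi_{+}\pi_{-}p\pi''$; this characterization is phrased entirely in terms of the consecutive \emph{values} $p$ and $p+1$. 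The whole proof then reduces to checking that inverse carries each position-vincular pattern to the matching value-vincular pattern, so that the two formulations become inverse-transposes of one another.

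First I would record what the value-adjacency Proposition actually forbids. For a fixed $p$, if $p$ precedes $p+1$ then the required decomposition fails exactly when some value $>p+1$ occurs before some value $<p$ strictly between them; reading the four relevant entries in position order gives the pattern $2413$, with the roles ``$2$'' and ``$3$'' played by the consecutive values $p,p+1$. Symmetrically, if $p+1$ precedes $p$ the decomposition fails exactly when a value $<p$ precedes a value $>p+1$ between them, producing the pattern $3142$ with ``$3$'' and ``$2$'' played by $p+1,p$. Thus the Proposition says precisely that $w\in\Bax_n$ if and only if $w$ avoids the \emph{value-vincular} versions of $2413$ and $3142$, the adjacency constraint now falling on the consecutive-value pair rather than on consecutive positions.

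Next I would transpose. Writing an occurrence of the position-vincular $3$-$14$-$2$ as points $(i,w_i),(j,w_j),(j{+}1,w_{j+1}),(k,w_k)$ with $i<j<j{+}1<k$ and $w_j<w_k<w_i<w_{j+1}$, applying $(a,b)\mapsto(b,a)$ and re-sorting by the first coordinate yields, in $w^{-1}$, the row pattern $2,4,1,3$, with the two transposed points $(w_j,j)$ and $(w_{j+1},j{+}1)$ now lying in the consecutive \emph{rows} $j,j{+}1$ and playing the roles ``$2$'' and ``$3$''. Hence $w$ contains position-vincular $3$-$14$-$2$ if and only if $w^{-1}$ contains value-vincular $2413$; the identical bookkeeping shows $w$ contains position-vincular $2$-$41$-$3$ if and only if $w^{-1}$ contains value-vincular $3142$. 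Combining these two equivalences, $w$ avoids both position-vincular patterns exactly when $w^{-1}$ avoids both value-vincular patterns, which by the Proposition is exactly $w^{-1}\in\Bax_n$. Since the left-hand side is by definition $w\in\Bax_n$, this chain of equivalences gives both directions at once.

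I expect the main obstacle to be organizational rather than conceptual: keeping straight, across the transpose, which pair of the four points carries the adjacency constraint and which of $2413$ or $3142$ it lands on, and verifying that the two value-vincular patterns produced this way are exactly the two the Proposition forbids, including getting the logical negation of ``a valid decomposition exists'' correct in both the $p$-before-$(p+1)$ and $(p+1)$-before-$p$ cases. Once the rank bookkeeping in these two cases is checked carefully, the asserted equivalence is immediate.
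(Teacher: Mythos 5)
Your argument is correct, but there is nothing in the paper to compare it against: the paper quotes this statement from Law and Reading (their Corollary 4.2) without proof, remarking only that the transposition argument which is trivial for classical patterns (since $2413$ and $3142$ are inverse to each other) requires ``extra work'' in the vincular setting. Your proposal supplies exactly that extra work, and it does so along the natural route suggested by the paper's own earlier Proposition: the reformulation of the Baxter condition via the consecutive values $p$ and $p+1$. Your two key verifications both check out. First, the negation analysis is right: for $p$ before $p+1$, the decomposition $\pi=\pi' p\,\pi_{-}\pi_{+}(p+1)\pi''$ fails exactly when a value $>p+1$ precedes a value $<p$ in the interior segment, yielding $2413$ with $p,p+1$ in the roles ``$2$'',``$3$''; the symmetric case yields $3142$. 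Second, the transposition bookkeeping is right: an occurrence of $3$-$14$-$2$ at positions $i<j<j+1<k$ with $w_j<w_k<w_i<w_{j+1}$ transposes to the points $(w_j,j),(w_k,k),(w_i,i),(w_{j+1},j+1)$, whose second coordinates have ranks $2,4,1,3$, with the consecutive values $j,j+1$ playing ``$2$'' and ``$3$''---so position-vincular $3$-$14$-$2$ in $w$ corresponds precisely to value-vincular $2413$ in $w^{-1}$, and likewise $2$-$41$-$3$ to value-vincular $3142$. Since the two-element set of patterns is carried to itself and every step is an equivalence, closure under inverse follows. Two remarks: (1) your proof is only as complete as the earlier Proposition, whose proof the paper leaves to the reader; since that Proposition compares two avoidance criteria on a \emph{single} permutation and makes no use of inverses, there is no circularity, but you should say so explicitly, as that Proposition is where the ``extra work'' the paper alludes to actually lives; (2) a side benefit of your position/value exchange is that it makes transparent the statistic-preserving claim in the paper's next Proposition, that $w\mapsto w^{-1}$ interchanges descents and inverse descents compatibly with conjugation by $w_0$.
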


If one were dealing with regular pattern avoidance, this would be trivial, because a permutation $w$ contains an instance of 2413 (resp. 3142) if and only if $w^{-1}$ contains an instance of 3142 (resp. 2413). However, one has to do some extra work to check that the analogous statement holds when one has the extra adjacency conditions of vincular patterns.

\begin{prop}
The map $w\mapsto w^{-1}$ gives a bijection between Baxter permutations with $k$ descents and Baxter permutations with $k$ inverse descents that commutes with conjugation by $w_0$.
\end{prop}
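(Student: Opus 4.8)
The plan is to lean entirely on the preceding proposition, which already guarantees that $\mathrm{Bax}_n$ is closed under taking inverses. That result absorbs all of the delicate vincular-pattern content (the asymmetry between $w$ and $w^{-1}$ under the adjacency conditions of the patterns 2-41-3 and 3-14-2), so what remains is bookkeeping about the relevant statistics together with one algebraic identity for $w_0$.

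First I would check that $w\mapsto w^{-1}$ is well-defined into the correct class and tracks the statistic correctly. By definition $w$ has an inverse descent at position $i$ exactly when $w^{-1}$ has a descent at position $i$, so the number of inverse descents of $w^{-1}$ equals the number of descents of $(w^{-1})^{-1}=w$. Hence if $w$ is Baxter with $k$ descents, then $w^{-1}$ is Baxter by the cited proposition and has exactly $k$ inverse descents; thus the map sends Baxter permutations with $k$ descents to Baxter permutations with $k$ inverse descents. For bijectivity I would note that $w\mapsto w^{-1}$ is an involution on $\mathfrak{S}_n$, so its inverse is the same map $v\mapsto v^{-1}$; running the statistic count in reverse (a permutation $v$ with $k$ inverse descents has $v^{-1}$ with $k$ descents) shows this carries the target class back to the source class, so the two maps are mutually inverse bijections.

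For the commutation with conjugation by $w_0$, the key observation is simply that $w_0$ is an involution, so $w_0^{-1}=w_0$ and therefore $(w_0 w w_0)^{-1}=w_0^{-1}w^{-1}w_0^{-1}=w_0 w^{-1}w_0$. This identity says precisely that conjugating and then inverting agrees with inverting and then conjugating, which is exactly the statement that the bijection intertwines the involution (conjugation by $w_0$) on the two classes.

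I do not expect a genuine obstacle: the hard, pattern-theoretic part has been quarantined in the preceding proposition, and the remaining content is the definition of inverse descents plus the trivial fact that $w_0^2=\mathrm{id}$. The only point requiring care — and the natural place to make an error — is keeping straight that inverting swaps the roles of descents and inverse descents, so that the map genuinely changes which statistic is being counted while preserving its value $k$.
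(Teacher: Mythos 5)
Your proposal is correct and follows the same route as the paper, whose entire proof is the observation that conjugation by $w_0$ commutes with $w\mapsto w^{-1}$ since $w_0^{-1}=w_0$, with closure of $\mathrm{Bax}_n$ under inverses delegated to the preceding Law--Reading proposition. You simply spell out the statistic bookkeeping (descents of $w$ equal inverse descents of $w^{-1}$) that the paper leaves implicit.
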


\begin{proof}
Conjugation by $w_0$ commutes with $w\mapsto w^{-1}$, since $w_0^{-1}=w_0$.
\end{proof}

While this result on its own is elementary, it is important because the previous Baxter families all had statistics that naturally corresponded to ascents/descents, while the remaining Baxter families all have statistics that will correspond to inverse ascents/inverse descents.

%One of many symmetries exhibited by the Baxter permutations is that $w$ is a Baxter permutation if and only if $w^{-1}$ is a Baxter permutation ~\cite{LR}. Since taking an inverse sends regular descents to inverse descents (and vice versa), this means the number of Baxter permutations of a fixed size with a fixed number of inverse descents is the same as the number of Baxter permutations of a fixed size with the same fixed number of regular descents. Since the longest element is its own inverse, taking inverses and conjugating by the longest element commute, and the number of Baxter permutations with $k$ ascents fixed under conjugation by $w_0$ is the same as the number of Baxter permutations with $k$ inverse-ascents fixed under conjugation by $w_0$. While this result on it is own is elementary, it is important because the previous Baxter families all had statistics that naturally corresponded to ascents/descents, while the remaining Baxter families all have statistics that will correspond to inverse ascents/inverse descents.

\subsection{Objects (B) and (C)}

There is another class of Baxter objects known as \emph{twisted Baxter permutations}. While Baxter permutations avoid the patterns 3-14-2 and 2-41-3, twisted Baxter permutations avoid the patterns 3-41-2 and 2-41-3. Even though the two pairs of patterns look similar, it is not immediately obvious that they should be so closely related. Section 8 of Law and Reading's paper \cite{LR} provides a bijection between the two that relies on looking at fibers of the lattice congruence $\Theta_{3412}$ on the weak order for $S_n$~\cite{LR}.

\begin{definition}
For $w\in S_n$, let $\mathrm{Inv}(w)$ be the set of inversions, or pairs $(w_i,w_j)$ with $1\leq i<j\leq n$ such that $w_j<w_i$. We say that $u\leq v$ in the weak order if $\mathrm{Inv}(u)\subseteq\mathrm{Inv}(v)$.
\end{definition}

\begin{theorem}[Corollary 3.1.4,\cite{BjornerBrenti}]
The covering relations for the weak order on $S_n$ come precisely from the pairs of permutations that differ only in two adjacent entries.
\end{theorem}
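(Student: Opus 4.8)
The plan is to prove both implications of the claimed characterization, working directly from the definition $u \le v \iff \mathrm{Inv}(u) \subseteq \mathrm{Inv}(v)$ and writing $\ell(w) = |\mathrm{Inv}(w)|$. The backbone is the elementary observation that swapping the entries in two adjacent positions $i,i+1$ (right multiplication by the adjacent transposition $s_i$) changes the relative order of exactly one pair of values, namely $\{w_i, w_{i+1}\}$, and leaves the relative order of every other pair of values untouched. Consequently $\mathrm{Inv}(w s_i)$ differs from $\mathrm{Inv}(w)$ by exactly one element: an element is added when $w_i < w_{i+1}$ (so $\ell(w s_i) = \ell(w)+1$) and removed when $w_i > w_{i+1}$. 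I also record that a permutation is determined by its inversion set, since that set encodes the relative order of every pair of values.

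For the easy direction, suppose $u$ and $v$ differ only by swapping two adjacent entries with $\ell(v) = \ell(u)+1$, so $\mathrm{Inv}(v) = \mathrm{Inv}(u) \sqcup \{p\}$ for a single pair $p$. Then $u < v$, and any $w$ with $\mathrm{Inv}(u) \subseteq \mathrm{Inv}(w) \subseteq \mathrm{Inv}(v)$ satisfies $\mathrm{Inv}(w) \in \{\mathrm{Inv}(u), \mathrm{Inv}(v)\}$; since inversion sets determine permutations, $w \in \{u,v\}$. Hence $u \lessdot v$ is a cover. The substance is the converse. I would deduce it from the following lifting lemma: if $u < v$ then there is an adjacent transposition $s_i$ with $u < u s_i \le v$ (necessarily $\ell(u s_i)=\ell(u)+1$). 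Granting the lemma, a cover $u \lessdot v$ forces $u s_i = v$, and $u$ and $u s_i$ differ exactly in the adjacent positions $i, i+1$, which is the assertion. To prove the lemma it suffices to find an ascent position $i$ of $u$ (i.e. $u_i < u_{i+1}$) whose reversed pair $(u_{i+1}, u_i)$ lies in $\mathrm{Inv}(v)$; then $u s_i$ adds precisely this inversion, so $\mathrm{Inv}(u s_i) \subseteq \mathrm{Inv}(v)$.

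So the crux is to show that when $u < v$ some adjacent ascent of $u$ is reversed in $v$. I would argue by contradiction: assume every adjacent ascent of $u$ keeps its order in $v$, and show this forces $\mathrm{Inv}(v) \subseteq \mathrm{Inv}(u)$, contradicting $u < v$. Take any $(x,y) \in \mathrm{Inv}(v)\setminus\mathrm{Inv}(u)$ with $x>y$; then $y$ precedes $x$ in $u$ at positions $p<q$, while $x$ precedes $y$ in $v$. I induct on the distance $q-p$. When $q-p=1$ the pair is an adjacent ascent of $u$ reversed in $v$, directly contradicting the assumption. When $q-p \ge 2$, inspect the intermediate value $c = u_{p+1}$ and split into the cases $c>x$, $c<y$, and $y<c<x$; using $\mathrm{Inv}(u)\subseteq\mathrm{Inv}(v)$ and transitivity of the order in $v$, each case produces either an adjacent reversed ascent of $u$ or a new pair in $\mathrm{Inv}(v)\setminus\mathrm{Inv}(u)$ of strictly smaller $u$-distance, closing the induction.

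I expect this case analysis on $c$—and the verification that each branch genuinely reduces the $u$-distance while staying inside $\mathrm{Inv}(v)\setminus\mathrm{Inv}(u)$—to be the main obstacle; everything else is bookkeeping about how inversion sets behave under an adjacent swap. The two directions then combine to give exactly the stated characterization of the covering relations.
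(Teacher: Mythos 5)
Your proof is correct, but note that the paper itself offers no proof of this statement: it is quoted as a known theorem with a citation to Bj\"orner and Brenti, so there is no internal argument to compare against. In Bj\"orner--Brenti the weak order is \emph{defined} via length-increasing chains of right multiplications by adjacent transpositions (prefixes of reduced words), which makes the covering characterization essentially immediate; the substance there is the equivalence of that definition with inversion-set containment, which is the definition this paper adopts. You work in the opposite direction: starting from $u\le v \Leftrightarrow \mathrm{Inv}(u)\subseteq\mathrm{Inv}(v)$, you prove the lifting lemma (if $u<v$ then $u<us_i\le v$ for some ascent $i$ of $u$), which is precisely the bridge between the two formulations. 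I checked the case analysis on $c=u_{p+1}$ that you flagged as the crux, and it closes as you predict: if $c>x$ then $(c,x)\in\mathrm{Inv}(u)\subseteq\mathrm{Inv}(v)$ and transitivity in $v$ gives $(c,y)\in\mathrm{Inv}(v)$, exhibiting the adjacent ascent $(y,c)$ of $u$ at positions $p,p+1$ reversed in $v$, a direct contradiction; if $c<y$ then $(y,c)\in\mathrm{Inv}(u)$ forces $(x,c)\in\mathrm{Inv}(v)\setminus\mathrm{Inv}(u)$ at distance $q-p-1$; if $y<c<x$, then either $x$ precedes $c$ in $v$ (a smaller-distance pair) or $c$ precedes $x$ in $v$, whence $(c,y)$ is again a reversed adjacent ascent. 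Together with your easy direction (a one-element difference of inversion sets admits nothing strictly between, since inversion sets determine permutations), this yields a complete and correct proof; it is more self-contained than the paper's citation, at the cost of the combinatorial induction that the textbook's word-based definition sidesteps.
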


We need the following proposition, which follows immediately from Proposition 8.1 in their paper. 

\begin{definition}
A 3-14-2 $\rightarrow$ 3-41-2 move on a permutation is an action that takes an instance of the pattern 3-14-2, and switches the adjacent entries in the middle so the subsequence corresponds to an instance of the pattern 3-41-2. That is to say, if $w=w_1\ldots w_n$ has an instance of 3-14-2 corresponding to the subsequence $w_i w_j w_{j+1} w_k$, then a 3-14-2 $\rightarrow$ 3-41-2 move would send $w$ to $w_1\ldots w_{j-1}w_{j+1}w_jw_{j+2}\ldots w_n$.
\end{definition}

\begin{prop}
Given a twisted Baxter permutation, it will be the maximal element in its fiber over $\Theta_{3412}$, the corresponding Baxter permutation will be the unique minimal element, and the fiber will consist of all permutations attainable from the twisted Baxter permutation by making any sequence of (3-14-2 $\rightarrow$ 3-41-2) moves.
\end{prop}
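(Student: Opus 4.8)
The plan is to derive this proposition as a direct consequence of Law and Reading's Proposition 8.1, which (as the excerpt notes) describes the structure of the fibers of the lattice congruence $\Theta_{3412}$ on the weak order. The statement has three assertions about a fiber containing a given twisted Baxter permutation $w$: that $w$ is the maximal element, that the associated Baxter permutation is the unique minimal element, and that the fiber is exactly the set of permutations reachable from $w$ by sequences of $(3\text{-}14\text{-}2 \to 3\text{-}41\text{-}2)$ moves. Since lattice-congruence fibers are intervals in the weak order, each fiber automatically has a unique maximal and unique minimal element, so the heart of the work is (i) identifying these two extremes with the twisted Baxter and Baxter representatives, and (ii) showing the covering relations within the fiber are precisely the pattern moves.

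First I would establish the connection between the pattern move and the weak order. By the cited Corollary 3.1.4, covering relations in the weak order correspond to swaps of two \emph{adjacent} entries $w_j w_{j+1}$ with $w_j < w_{j+1}$, which creates one new inversion. A $(3\text{-}14\text{-}2 \to 3\text{-}41\text{-}2)$ move swaps adjacent entries $w_j w_{j+1}$ in the context of a surrounding $3,\dots,2$ pattern; crucially, in an instance of $3\text{-}14\text{-}2$ we have $w_j < w_{j+1}$ (the ``$1$'' precedes the ``$4$''), so the move adds exactly the inversion $(w_{j+1}, w_j)$ and is therefore an \emph{upward} cover in the weak order. This shows that applying such moves strictly increases position in the weak order, which immediately gives that the maximal element of the fiber admits no further such move — matching the twisted-Baxter avoidance of $3\text{-}41\text{-}2$ (the target pattern) — while the minimal element admits no reverse move, matching the Baxter avoidance of $3\text{-}14\text{-}2$.

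Next I would invoke Proposition 8.1 to pin down that each fiber of $\Theta_{3412}$ is a weak-order interval whose minimal element avoids $3\text{-}14\text{-}2$ (hence, together with the automatic $2\text{-}41\text{-}3$-avoidance preserved along the fiber, is Baxter) and whose maximal element avoids $3\text{-}41\text{-}2$ (hence is twisted Baxter). The uniqueness of each extremal element is forced by the lattice-congruence structure. To finish, I would verify that the entire interval is generated by the pattern moves: starting from the maximal twisted Baxter permutation, any other element of the fiber lies below it, and because the only covering relations \emph{internal} to the fiber are exactly the $(3\text{-}14\text{-}2 \to 3\text{-}41\text{-}2)$ swaps (a non-pattern adjacent swap would cross into a different congruence class, by the definition of $\Theta_{3412}$), every element is reachable by a descending chain of reverse moves, equivalently attainable from the twisted Baxter permutation by forward moves.

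The main obstacle I anticipate is the last verification: showing that no covering relation inside a fiber can be an adjacent swap \emph{other} than one realizing a $3\text{-}14\text{-}2 \leftrightarrow 3\text{-}41\text{-}2$ reconfiguration. This requires using the precise definition of the congruence $\Theta_{3412}$ to argue that any adjacent transposition not embedded in such a $3,\dots,2$ context either leaves the class unchanged in a trivial way or moves to a genuinely different class, so that the only \emph{strictly} intra-fiber covers are the pattern moves. This is exactly the content one must extract carefully from Law and Reading's Proposition 8.1 rather than re-prove from scratch, so I would lean on their lattice-congruence machinery here and confine my contribution to translating it into the explicit pattern-move description.
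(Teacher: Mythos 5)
Your overall route is the same as the paper's: the paper gives no independent argument for this proposition, saying only that it ``follows immediately from Proposition 8.1'' of Law and Reading, and your plan --- fibers of a lattice congruence are weak-order intervals, and the intra-fiber covers are exactly the $3\text{-}14\text{-}2 \leftrightarrow 3\text{-}41\text{-}2$ swaps --- is the right way to unpack that citation. Your first step is also correct: in an instance of $3\text{-}14\text{-}2$ the adjacent pair satisfies $w_j < w_{j+1}$, so the move creates exactly one new inversion and is an upward cover in the weak order.

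But from that correct observation you draw backwards conclusions. An element of the fiber admitting no further \emph{forward} move is one containing no instance of the \emph{source} pattern $3\text{-}14\text{-}2$; together with the $2\text{-}41\text{-}3$-avoidance this makes the top of the fiber the \emph{Baxter} permutation. Dually, the element admitting no \emph{reverse} move avoids the target pattern $3\text{-}41\text{-}2$ and is the \emph{twisted Baxter} permutation, sitting at the \emph{bottom}. You assert the opposite twice (``the maximal element \ldots matching the twisted-Baxter avoidance of $3\text{-}41\text{-}2$ (the target pattern)'', and later ``minimal element avoids $3\text{-}14\text{-}2$ \ldots hence is Baxter''), and your closing equivalence --- that elements reachable from the maximum by a descending chain of reverse moves are ``equivalently attainable from the twisted Baxter permutation by forward moves'' --- is a non sequitur under your assignment: if the twisted Baxter permutation were maximal, upward moves from it could reach nothing, contradicting the final clause of the very statement being proved. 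In fairness, the proposition as printed in the paper transposes ``maximal'' and ``minimal'' (the paper's own figure shows the Baxter permutation $4567123$ at the top of the fiber and the twisted Baxter permutation $4125673$ at the bottom, consistent with Law and Reading's Proposition 8.1), so you inherited an erratum; but a correct write-up must repair the orientation rather than bend the cover-direction argument to fit it, since your own first step already forces which extreme is which.
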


\begin{figure}[h]
\label{3412fiber}
\centering
\begin{tikzpicture}
\node (a) at (0,7) {4567123};
\node (b) at (0,6) {4561723};
\node (c) at (-1,5) {4516723};
\node (d) at (1,5) {4561273};
\node (e) at (-1,4) {4156723};
\node (f) at (1,4) {4516273};
\node (g) at (0,3) {4156273};
\node (h) at (0,2) {4152673};
\node (i) at (0,1) {4125673};
\draw (a) -- (b) -- (c) -- (e) -- (g) -- (f) -- (c);
\draw (b) -- (d) -- (f) -- (g) -- (h) -- (i);
\end{tikzpicture}
\caption{Fiber of the congruence $\theta_{3412}$ with the Baxter permutation 4567123 as its maximal element, and the twisted Baxter permutation 4125673 as its minimal element.}
\end{figure}
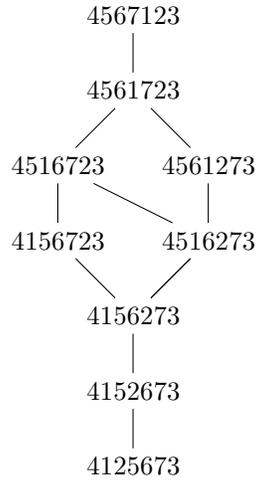

\begin{corollary}
The number of twisted Baxter permutations of length $n$ with $k$ inverse descents is equal to the number of Baxter permutations of length $n$ with $k$ inverse descents.
\end{corollary}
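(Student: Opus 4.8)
The plan is to use the preceding proposition to extract a bijection and then track inverse descents along the moves that connect the two distinguished elements of each fiber. The preceding proposition shows that every fiber of the congruence $\Theta_{3412}$ contains a unique minimal element, which is a Baxter permutation, and a unique maximal element, which is a twisted Baxter permutation. Sending each twisted Baxter permutation to the unique Baxter permutation lying in the same fiber is therefore a bijection from twisted Baxter permutations of length $n$ to Baxter permutations of length $n$. It remains only to check that this bijection preserves the number of inverse descents.

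Since the proposition also tells us that every element of a fiber is reachable from the maximal (twisted Baxter) element by a sequence of (3-14-2 $\rightarrow$ 3-41-2) moves, it suffices to show that a single such move leaves the inverse descent set unchanged; the equality of inverse descent numbers for the minimal and maximal elements then follows. First I would recall that a (3-14-2 $\rightarrow$ 3-41-2) move acts on an instance $w_i w_j w_{j+1} w_k$ of the pattern 3-14-2, which by definition satisfies $w_j < w_k < w_i < w_{j+1}$, and swaps the two adjacent entries $w_j$ and $w_{j+1}$. The crucial observation is that the two swapped values are not consecutive integers: because $w_k$ and $w_i$ lie strictly between $w_j$ and $w_{j+1}$, we have $w_{j+1} > w_j + 1$.

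Next I would note that inverse descents depend only on the relative left-to-right order of consecutive-valued pairs, since $w$ has an inverse descent at position $m$ exactly when $m+1$ occurs to the left of $m$. Swapping the entries in the two adjacent positions $j$ and $j+1$ changes the relative left/right order of exactly one pair of values, namely $\{w_j, w_{j+1}\}$, and fixes the relative order of every other pair of values. Since $\{w_j, w_{j+1}\}$ is not a pair of consecutive integers, the relative order of every consecutive-valued pair $(m, m+1)$ is preserved, and hence the inverse descent set, and a fortiori the number of inverse descents, is unchanged by the move.

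The main obstacle---really the only substantive point---is the observation that the swapped values are non-consecutive, which is forced by the adjacency structure of the vincular pattern 3-14-2. Once this is in hand, every (3-14-2 $\rightarrow$ 3-41-2) move is seen to preserve the inverse descent set, so the statistic is constant along each fiber; in particular the minimal Baxter permutation and the maximal twisted Baxter permutation in any fiber share the same number of inverse descents, which establishes the corollary.
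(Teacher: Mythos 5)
Your proposal is correct and follows essentially the same route as the paper: the paper's proof likewise observes that each (3-14-2 $\rightarrow$ 3-41-2) move swaps the adjacent entries playing the roles of 1 and 4, which can never be consecutive values, so the relative order of $i$ and $i+1$ (and hence the inverse descent set) is preserved along the fiber connecting the twisted Baxter permutation to its Baxter permutation. Your write-up merely makes explicit the non-consecutivity observation that the paper states in one line.
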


\begin{proof}
The moves that get us from a twisted Baxter permutation to a Baxter permutation will never change the number of inverse descents. Swapping the elements playing the role of 1 and 4 in adjacent positions will never change the relative order of $i$ and $i+1$ for any $i$.
\end{proof}

\begin{corollary}
A twisted Baxter permutation and its corresponding Baxter permutation are each fixed under conjugation by $w_0$ if and only if their common fiber is fixed under conjugation by $w_0$.
\end{corollary}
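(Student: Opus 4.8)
The plan is to show that conjugation by $w_0$ permutes the fibers of the congruence $\Theta_{3412}$; the biconditional then follows formally from the fact that each fiber is an interval with a distinguished Baxter endpoint and twisted Baxter endpoint. Throughout write $c(w)=w_0 w w_0$, so that in one-line notation $c$ reverses positions and complements values, $c(w)_i=n+1-w_{n+1-i}$.

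First I would check that $c$ is an automorphism of the weak order. A short computation shows that $(i,j)$ with $i<j$ is an inversion of $c(w)$ if and only if $(n+1-j,\,n+1-i)$ is an inversion of $w$. Since $(i,j)\mapsto(n+1-j,\,n+1-i)$ is a fixed involution on pairs, it follows that $\mathrm{Inv}(u)\subseteq\mathrm{Inv}(v)$ if and only if $\mathrm{Inv}(c(u))\subseteq\mathrm{Inv}(c(v))$. Hence $c$ preserves the weak order, so it carries intervals to intervals and the maximum (resp. minimum) of an interval to the maximum (resp. minimum) of its image.

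The key step---and the one I expect to be the main obstacle---is to verify that $c$ respects the move structure of the preceding Proposition: if $w'$ is obtained from $w$ by a (3-14-2 $\rightarrow$ 3-41-2) move at positions $j,j+1$, then $c(w')$ is obtained from $c(w)$ by such a move at positions $n-j,\,n+1-j$. This is a direct but careful bookkeeping check. One tracks the four positions $i<j<j+1<k$ of a 3-14-2 occurrence through $c$; the images $n+1-k<n-j<n+1-j<n+1-i$ again have their middle two positions adjacent, and reversing-and-complementing the values sends the relative order $3,1,4,2$ back to $3,1,4,2$, so $c$ takes a 3-14-2 occurrence to a 3-14-2 occurrence and intertwines the relevant adjacent transposition with an adjacent transposition. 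Consequently $c$ maps the set of permutations reachable from a twisted Baxter permutation $t$ by such moves bijectively onto the set reachable from $c(t)$; by the Proposition these two sets are exactly the fiber $F$ containing $t$ and the fiber containing $c(t)$. Thus $c(F)$ is again a fiber, and since $c$ sends maxima to maxima it sends the twisted Baxter endpoint of $F$ to that of $c(F)$ and the Baxter endpoint of $F$ to that of $c(F)$.

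It remains to assemble the equivalence. Let $F$ be the common fiber of a twisted Baxter permutation $t$ and its corresponding Baxter permutation $b$. By the previous paragraph $c(F)$ is the fiber with twisted Baxter endpoint $c(t)$ and Baxter endpoint $c(b)$. Because distinct fibers are disjoint and a fiber is determined by either of its distinguished endpoints, the three conditions $c(t)=t$, $c(b)=b$, and $c(F)=F$ are mutually equivalent: if $c(F)=F$ then both endpoints are fixed, and conversely if either endpoint is fixed then $c(F)$ is the fiber containing that fixed endpoint, namely $F$. In particular $t$ and $b$ are each fixed under conjugation by $w_0$ precisely when their common fiber is fixed, which is the desired statement.
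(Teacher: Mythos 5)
Your proof is correct and takes essentially the same approach as the paper, whose one-sentence argument likewise observes that since the fibers are exactly the orbits of the (3-14-2 $\leftrightarrow$ 3-41-2) moves, conjugation by $w_0$ must map fibers to fibers. Your extra verifications---that $w\mapsto w_0ww_0$ is a weak-order automorphism and that it carries a 3-14-2 occurrence to a 3-14-2 occurrence with the adjacency preserved---simply fill in details the paper leaves implicit.
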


\begin{proof}
Since the fibers of this congruence can be described as the orbit of all possible (3-14-2 $\leftrightarrow$ 3-41-2) moves, conjugation by $w_0$ will map fibers to fibers.
\end{proof}

%First of all, it says that given a twisted Baxter permutation, we can obtain the corresponding Baxter permutation by a series of (3-14-2 $\rightarrow$ 3-41-2) moves (i.e., anytime you have an instance of the pattern 3-14-2, you swap the letters in the original word corresponding to 1 and 4 so choosing that same subsequence would now yield and instance of the pattern 3-41-2). The adjacency condition tells us a twisted Baxter permutation will be mapped to a Baxter permutation with the same number of inverse ascents/inverse descents, but not necessarily the same number of ascents/descents. Thus, we can restrict the bijection to be between twisted Baxter permutations of a fixed number of inverse descents, and Baxter permutations with the same number of inverse descents.

%The second fact we need is that, since we can characterize the fibers of this lattice congruence as orbits of all possible sequences of (3-14-2 $\rightarrow$ 3-41-2) moves, conjugation by $w_0$ will map fibers to fibers. In particular, this means that a twisted Baxter permutation is mapped back to its own fiber (and thus itself) if and only if the corresponding Baxter permutation is mapped back to its own fiber (and thus itself).

\subsection{Objects (C) and (F)}

\begin{definition}
A \emph{diagonal rectangulation} of size $n$ is a subdivision of an $n\times n$ square into $n$ rectangles (with lattice points for corners) such that the interior of every rectangle intersects a fixed diagonal of the square.
\end{definition}

\begin{figure}[!h]
\centering
\tikzstyle{my help lines}=[gray,thick,dashed]
\begin{tikzpicture}[scale=.65]
 \draw (0,0) rectangle (3,2);
 \draw (0,2) rectangle (1,4);
 \draw (1,2) rectangle (3,4);
 \draw (3,0) rectangle (4,4);
 \draw[style=my help lines] (0,4) -- (4,0);
\end{tikzpicture}\caption{A diagonal rectangulation of size $n=4$}
\end{figure}
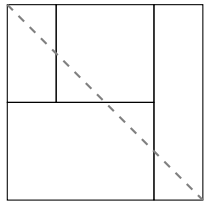

%\noindent See the tables at the end for examples with $n=4$, and also Figure~\ref{tbax} below.

We next check to see that the bijection between twisted Baxter permutations and diagonal rectangulations given in Section 6 of Law and Reading~\cite{LR} preserves the indicated statistic, and will equivariantly take conjugation by $w_0$ to $180^{\circ}$ rotation. We again have the intermediate object of pairs of twin trees. 

The map from twisted Baxter permutations to pairs of twin trees used by Law and Reading is equivalent to $\mathrm{Extend}^{\times 2}(w^{-1})$ (they respectively call these the \emph{upper} and \emph{lower planar binary trees}). Conjugation by $w_0$ on twisted Baxter permutations will correspond to the same involution on pairs of trees defined in Definition~\ref{treeinvdef}. Also, if a twisted Baxter permutation $w$ has $k$ inverse ascents, $w^{-1}$ will have $k$ ascents, and $\mathrm{incr}(w^{-1})$ will have $k$ left leaves (excluding the left-most one), preserving the statistic.

A diagonal rectangulation is made by gluing the two trees together. In particular, one draws the trees so that all the leaves are evenly spaced on the lowest level, and all intersections make right angles. Then the twin tree condition guarantees that if we turn the left tree upside-down, it will match up with the right tree to form a diagonal rectangulation (see Figure~\ref{tbax}). 

It is then obvious that the involution on pairs of trees corresponds to $180^{\circ}$ rotation on diagonal rectangulations, and that $k$ left leaves in the left tree (excluding the left-most one) will correspond to the $k$ vertical intersections with the interior of the diagonal.

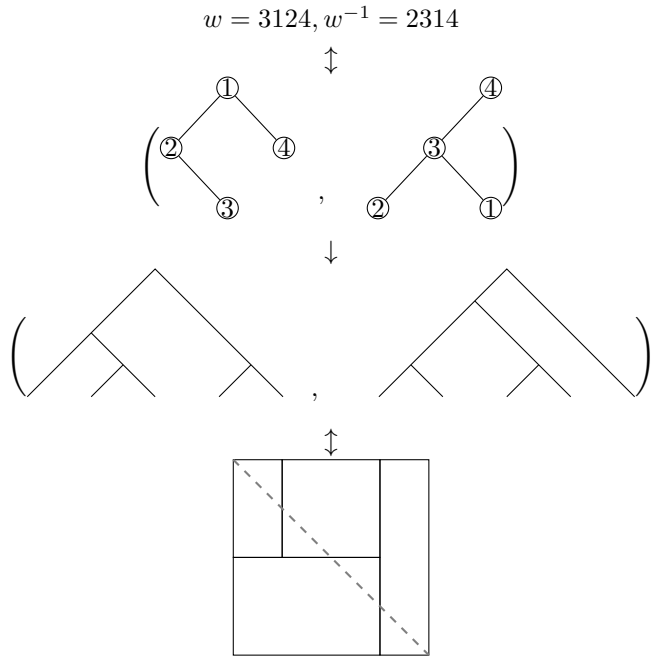
\begin{figure}[!thb]
\centering
$\begin{array}{c}
 w=3124, w^{-1}=2314 \\
\updownarrow \\
\raisebox{\height}{$\Bigg($}
\begin{tikzpicture}
[inner sep=0mm, level distance=8mm] 
    \node[circle,draw] {1}  	
      child {node[circle,draw] {2}
	child[missing] {node[circle,draw] {5}}
	child {node[circle,draw] {3}}
      }
      child {node[circle,draw] {4}
      };
\draw (1.25,-1.5) node {,};
\begin{scope}[xshift=35mm]
[inner sep=0mm, level distance=8mm] 
    \node[circle,draw] {4}  	
      child {node[circle,draw] {3}
	child {node[circle,draw] {2}}
	child {node[circle,draw] {1}}
      }
      child[missing]{node[circle,draw] {5}
	};
\end{scope}
\end{tikzpicture}
\raisebox{\height}{$\Bigg)$}\\
\downarrow \\
\raisebox{\height}{$\Bigg($}

\begin{tikzpicture}[scale=.85]
 \draw (0,0) -- (2,2) -- (4,0) ;
 \draw (1,0) -- (1.5,.5);
 \draw (2,0) -- (1,1);
 \draw (3,0) -- (3.5,.5);
\begin{scope}[xshift=55mm]
 \draw (0,0) -- (2,2) -- (4,0) ;
 \draw (1,0) -- (.5,.5);
 \draw (2,0) -- (2.5,.5);
 \draw (3,0) -- (1.5,1.5);
\end{scope}
\draw (4.5,0) node {,};
\end{tikzpicture}

\raisebox{\height}{$\Bigg)$}\\
\updownarrow \\
\tikzstyle{my help lines}=[gray,thick,dashed]
\begin{tikzpicture}[scale=.65]
 \draw (0,0) rectangle (3,2);
 \draw (0,2) rectangle (1,4);
 \draw (1,2) rectangle (3,4);
 \draw (3,0) rectangle (4,4);
 \draw[style=my help lines] (0,4) -- (4,0);
\end{tikzpicture}\\

\end{array}$
\caption{Map from twisted Baxter permutations to diagonal rectangulations}
\label{tbax}
\end{figure}

\section{Proof of Theorem~\ref{Theorem 2}}
\label{Section 3}

By Theorem \ref{Theorem 1}, if we want to count the number of objects fixed under involution for any Baxter object, we only have to find the number of objects fixed under involution for one family of Baxter objects. This is easiest for Baxter plane partitions. MacMahon gave a closed formula for the generating function of plane partitions inside a box, weighted by number of boxes.

\newpage

\begin{theorem}[\cite{Stanley},Theorem 7.21.7]
 Fix $a$, $b$, and $c$, and let $|\pi|$ be the total number of boxes in a plane partition. Then
\begin{equation}
\label{MacMahon} 
  \sum_{\pi} q^{|\pi|}=\prod_{\substack{i=1,\ldots,a \\ j=1,\ldots,b \\ k=1,\ldots,c }} \frac{[i+j+k-1]_q}{[i+j+k-2]_q}
\end{equation}
 where $\pi$ runs over all plane partitions that fit in an $a\times b\times c$ box.

\end{theorem}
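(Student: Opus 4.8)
The plan is to prove MacMahon's formula by converting the weighted enumeration of plane partitions in the box into a weighted enumeration of non-intersecting lattice paths, applying the Lindström--Gessel--Viennot lemma to express it as a determinant of Gaussian binomial coefficients, and then evaluating that determinant in closed product form.

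First I would reuse the bijection already established for objects (D) and (G): a plane partition $\pi$ in an $a\times b\times c$ box is the data of its $c$ layers $\lambda_c\subseteq\cdots\subseteq\lambda_1$, which are exactly the partitions cut out by a family of $c$ non-intersecting monotone lattice paths in an $a\times b$ grid (the case $c=3$ being the bijection used above). The crucial bookkeeping observation is that the volume statistic is additive across this decomposition, $|\pi|=\sum_{i=1}^{c}|\lambda_i|$, and each $|\lambda_i|$ is precisely the area cut off by the $i$-th path. Hence, weighting each lattice path by $q^{\text{area}}$ makes the weight multiplicative over the family, so that $\sum_\pi q^{|\pi|}$ becomes exactly the weight-enumerator of non-intersecting path families with the prescribed endpoints $A_j\to B_i$.

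Next I would invoke the Lindström--Gessel--Viennot lemma. Because the sources and sinks are arranged in compatible (nested) order, every non-identity permutation of the endpoints forces an intersection, so the signed path sum collapses to the single determinant
\[
\sum_\pi q^{|\pi|}\;=\;\det\bigl(M_{ij}\bigr)_{1\le i,j\le c},
\]
where $M_{ij}$ enumerates single monotone paths from $A_j$ to $B_i$ by area. By the standard $q$-enumeration of lattice paths, each such enumerator is, after factoring out its minimal-area power of $q$, a Gaussian binomial coefficient, so $M_{ij}=q^{e_{ij}}\qbin{a+b}{\,\cdot\,}{q}$ for suitable exponents and parameters depending on the endpoints.

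The heart of the argument, and the step I expect to be the main obstacle, is evaluating this $c\times c$ determinant of $q$-binomials as a product. My plan is to extract the common powers of $q$ and the common $q$-factorial factors from each row and column, reducing $\det(M_{ij})$ to a $q$-analogue of a Vandermonde-type binomial determinant of the form $\det\bigl(\qbin{x_i+j}{j}{q}\bigr)$; such determinants triangularize under repeated column operations via the $q$-Pascal recurrence $\qbin{n}{k}{q}=\qbin{n-1}{k}{q}+q^{n-k}\qbin{n-1}{k-1}{q}$, leaving a telescoping product. Reassembling the extracted factors with the telescoped determinant yields exactly $\prod_{i,j,k}\frac{[i+j+k-1]_q}{[i+j+k-2]_q}$. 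As an alternative that sidesteps the lattice-path setup, one can slice $\pi$ into interlacing partitions and recognize $\sum_\pi q^{|\pi|}$ as a sum of principal specializations $s_\lambda(1,q,\dots,q^{a-1})$ over partitions $\lambda$ fitting in a $b\times c$ rectangle; the hook-content formula together with the telescoping identity $\prod_{k=1}^{c}\frac{[i+j+k-1]_q}{[i+j+k-2]_q}=\frac{[i+j+c-1]_q}{[i+j-1]_q}$ then produces the same product, but the real difficulty is simply relocated into carrying out the Schur-function summation.
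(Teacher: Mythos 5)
You should first note that the paper itself contains no proof to compare against: this is MacMahon's box formula, quoted verbatim from Stanley (EC2, Theorem 7.21.7) and used as a black box (its only role is to feed Corollary 3.2 and Stembridge's $q=-1$ theorem). Measured against the standard proofs in the literature, your main route is correct in outline and is essentially the classical Gessel--Viennot argument; it also has the virtue of extending the paper's own bijection between objects (D) and (G) from $c=3$ layers to general $c$. The individual steps are sound: the volume is additive over the layers $\lambda_c\subseteq\cdots\subseteq\lambda_1$; weighting each east step at height $y$ by $q^{y}$ makes $q^{\mathrm{area}}$ multiplicative over steps, so the lemma applies in its weighted form; the staggered endpoints force the identity permutation; and each entry is $M_{ij}=q^{e_{ij}}\qbin{a+b}{a+i-j}{q}$ for a routine minimal-area exponent $e_{ij}$. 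Stanley's cited proof is genuinely different: it encodes plane partitions in the box as column-strict arrays of a \emph{single} rectangular shape (a principal specialization such as $s_{(b^a)}(1,q,\ldots,q^{a+c-1})$, up to a power of $q$) and applies the hook--content formula, with no determinant anywhere. The one substantive deferral in your plan is the $c\times c$ determinant evaluation, which you correctly identify as the crux; it does go through by the column operations you sketch (this is standard $q$-determinant calculus, e.g.\ in Krattenthaler's determinant compendium), but as written your text is a plan for that computation, not the computation.

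Your alternative Schur-function route, however, is wrong as stated, in two ways. The smaller issue is normalization: the layer slicing produces $\sum_\lambda q^{|\lambda|}s_\lambda(1,q,\ldots,q^{a-1})=\sum_\lambda s_\lambda(q,q^2,\ldots,q^{a})$, not $\sum_\lambda s_\lambda(1,q,\ldots,q^{a-1})$; already for $a=b=c=1$ your sum gives $1+1=2$ rather than $1+q$. The larger issue is structural: a single bounded sum $\sum_{\lambda\subseteq b\times c}s_\lambda(\cdot)$ of one principal specialization cannot equal MacMahon's product in general, because the product is symmetric under $b\leftrightarrow c$ while the sum is not. Concretely, the boxes $1\times 2\times 1$ and $1\times 1\times 2$ both have generating function $[3]_q=1+q+q^2$, but whichever way you orient the rectangle, in one of the two cases the sum ranges over $\{\emptyset,(1),(1,1)\}$ in a single variable, where $s_{(1,1)}$ vanishes, leaving $1+q$ (or $2$ with your unshifted specialization). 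The slices of a plane partition that interlace are the \emph{diagonal} slices, and that decomposition yields a sum over $\lambda$ of \emph{products} of two bounded Gelfand--Tsetlin generating functions, not a single specialization; the correct single-Schur statement is Stanley's rectangular one above. Since this aside was offered only as an alternative, it does not damage your main LGV argument, but it should be deleted or repaired rather than left as a claimed second proof.
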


 We will write the above sum as $M(a,b,c;q)$. One can check that for Baxter plane partitions, this gives the previously defined $q$-analogue of $\Theta_{k,\ell}$.

\begin{corollary}
\label{generatingfunction} 
\begin{equation}
M(k,\ell,3;q)=\sum_{\pi} q^{|\pi|} = \frac{\qbin{n+1}{k}{q}\qbin{n+1}{k+1}{q}\qbin{n+1}{k+2}{q}}{\qbin{n+1}{1}{q}\qbin{n+1}{2}{q}} = \Theta_{k,\ell}(q)
\end{equation} 
 
\noindent where $\pi$ runs over all plane partitions that fit in a $k\times\ell\times 3$ box.

\end{corollary}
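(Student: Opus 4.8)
The plan is to start from MacMahon's product formula \eqref{MacMahon} specialized to $a=k$, $b=\ell$, $c=3$, and to massage the resulting product into the claimed ratio of Gaussian binomial coefficients. First I would write
$$M(k,\ell,3;q)=\prod_{i=1}^{k}\prod_{j=1}^{\ell}\prod_{m=1}^{3}\frac{[i+j+m-1]_q}{[i+j+m-2]_q}.$$
The key observation is that the inner product over $m\in\{1,2,3\}$ telescopes: for fixed $i,j$ it collapses to $\frac{[i+j+2]_q}{[i+j-1]_q}$. So the triple product reduces to a double product over $i$ and $j$ alone, and the whole problem becomes a manipulation of a single telescoping quotient of $q$-integers.

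The main step is then to recognize this double product as the quotient of $q$-binomials. I would substitute $n=k+\ell+1$ so that the ambient box parameters line up with the indices appearing in $\eqref{defofqanalog}$, and then expand each Gaussian binomial $\qbin{n+1}{k}{q}$, $\qbin{n+1}{k+1}{q}$, $\qbin{n+1}{k+2}{q}$, $\qbin{n+1}{1}{q}$, $\qbin{n+1}{2}{q}$ in terms of $q$-factorials. The cleanest route is to verify the identity at the level of the generating-function formula already known in the $q=1$ case: the ordinary Baxter number identity $\Theta_{k,\ell}=\frac{\binom{n+1}{k}\binom{n+1}{k+1}\binom{n+1}{k+2}}{\binom{n+1}{1}\binom{n+1}{2}}$ is the specialization at $q=1$, and MacMahon's formula is the natural $q$-deformation, so I expect the rearrangement to be a direct $q$-analogue of the classical counting identity. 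Concretely, I would cancel common $q$-factorial factors between numerator and denominator on the Gaussian-binomial side and match the surviving product against the telescoped double product, using only $[m]!_q=[m]_q[m-1]_q\cdots[1]_q$ and the defining formula for $\qbin{n}{i}{q}$.

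The hard part will be bookkeeping: organizing the cancellation of $q$-factorials so that the residual product over the box indices $(i,j)$ matches the telescoped MacMahon expression term-for-term, without sign or index-shift errors. Once the algebra is arranged, equality of the two products is a finite, mechanical check, so there is no genuine conceptual obstacle. Since the statement is explicitly flagged as something ``one can check,'' I would present the telescoping of the $m$-product and the reduction to $q$-factorials as the substantive content and then assert that the remaining cancellation reproduces $\eqref{defofqanalog}$, giving $M(k,\ell,3;q)=\Theta_{k,\ell}(q)$ as required.
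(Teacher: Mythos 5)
Your proposal is correct and follows the same route as the paper, which simply specializes MacMahon's formula at $a=k$, $b=\ell$, $c=3$ and leaves the identification with $\Theta_{k,\ell}(q)$ as a reader-checkable computation; your telescoping of the $m$-product to $\prod_{i,j}[i+j+2]_q/[i+j-1]_q$ and the subsequent $q$-factorial cancellation (both sides reduce to $[2]_q\,[N]!_q\,[N-1]!_q\,[N-2]!_q$ divided by $[k]!_q[k+1]!_q[k+2]!_q[\ell]!_q[\ell+1]!_q[\ell+2]!_q$ with $N=k+\ell+2$) is exactly the omitted verification. The only caution is your aside about checking at $q=1$: that specialization is a sanity check, not a proof, but your actual argument does not rely on it.
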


In particular, this tells us that $\Theta_{k,\ell}(q)$ is indeed a polynomial with symmetric, non-negative integer coefficients, which is not immediately obvious from the definition.

Additionally, we have the following theorem of Stembridge.

\begin{theorem}[Stembridge, Example 2.1, \cite{Stembridge}]
The number of self-complementary plane partitions that fit inside an $a\times b\times c$ box is $M(a,b,c;-1)$.
\end{theorem}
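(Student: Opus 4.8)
The statement is Stembridge's; here is the route I would take to establish it. By the correspondence recalled above, plane partitions in the $a\times b\times c$ box are exactly the order ideals of the poset $[a]\times[b]\times[c]$, with $|\pi|$ the cardinality of the ideal, and complementation $\pi\mapsto\pi'$ is the involution induced by the central symmetry $(i,j,k)\mapsto(a+1-i,b+1-j,c+1-k)$; the self-complementary plane partitions are its fixed points. The first thing I would check is how far the obvious pairing argument goes. Grouping each non-fixed $\pi$ with its complement, the pair contributes $q^{|\pi|}+q^{abc-|\pi|}$ to $M(a,b,c;q)$, which at $q=-1$ equals $(-1)^{|\pi|}\bigl(1+(-1)^{abc}\bigr)$. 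When $abc$ is odd this vanishes, and simultaneously no plane partition can satisfy $|\pi|=abc-|\pi|$, so there are no fixed points and $M(a,b,c;-1)=0$ as required. Thus the entire content lies in the case $abc$ even, where each non-fixed pair contributes $\pm 2$ and does not cancel: genuine extra input is needed.

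For the even case I would pursue either of two routes. The \emph{analytic} route evaluates MacMahon's product \eqref{MacMahon} directly at $q=-1$. Since $[m]_q=\prod_{d\mid m,\,d>1}\Phi_d(q)$ and the only cyclotomic factor vanishing at $q=-1$ is $\Phi_2(q)=q+1$, each $[m]_q$ has a simple zero at $q=-1$ exactly when $m$ is even; I would therefore factor $q+1$ out of the numerator and denominator of $\prod_{i,j,k}[i+j+k-1]_q/[i+j+k-2]_q$, cancel the matching powers, take the limit $q\to-1$, and compare with the classical product formula for the self-complementary symmetry class of plane partitions. The \emph{conceptual} route instead writes $M(a,b,c;q)$, up to a power of $q$, as a principal specialization $s_\lambda(1,q,\dots,q^{m-1})$ of a Schur function of rectangular shape, uses the fact that complementation of plane partitions corresponds to Sch\"utzenberger evacuation of the associated rectangular semistandard tableaux, and then invokes that the value of this specialization at $q=-1$ counts the evacuation-fixed tableaux. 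This displays the statement as the order-two case of what is now called the cyclic sieving phenomenon.

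The main obstacle differs by route. For the analytic route it is the $0/0$ bookkeeping: one must count, for each parity pattern of $(a,b,c)$, how many triples $(i,j,k)$ make $i+j+k$ even versus odd, check that the surviving powers of $(q+1)$ in numerator and denominator agree, and evaluate the resulting finite limit --- a case analysis with three essentially different subcases (all of $a,b,c$ odd; exactly one even; at least two even). For the conceptual route the crux is the input I would treat as a black box, namely that evacuation (equivalently, the $w_0$-twisted action) is unitriangular with respect to a canonical basis of the corresponding representation, so that its trace specialized at $q=-1$ literally counts fixed basis vectors; this representation-theoretic fact is the heart of Stembridge's argument and the part I would not expect to reprove from scratch.
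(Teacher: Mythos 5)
You should know at the outset that the paper contains no proof of this statement: it is imported wholesale from Stembridge (Example 2.1 of \cite{Stembridge}) and used as a black box to deduce Theorem 3.4, so there is no internal argument for your attempt to match or diverge from. Judged on its own terms, your sketch is structurally sound and corresponds to the two arguments that actually exist in the literature. Your pairing argument for the odd case is complete and correct as written: a non-fixed complementary pair contributes $(-1)^{|\pi|}\bigl(1+(-1)^{abc}\bigr)=0$ at $q=-1$, while $|\pi|=abc-|\pi|$ is impossible for $abc$ odd, so both the specialization and the fixed-point count vanish. For $abc$ even, your analytic route --- factor the simple zeros of $[m]_q$ at $q=-1$ (even $m$) out of MacMahon's product \eqref{MacMahon}, evaluate the limit, and compare with the product formula for the self-complementary symmetry class --- is essentially how the coincidence was first observed, and it is not circular, since Stanley's enumeration of self-complementary plane partitions predates and is independent of the $q=-1$ specialization; your conceptual route, via the principal specialization of a rectangular Schur function, the identification of complementation with evacuation, and unitriangularity of the $w_0$-action on a canonical basis, is Stembridge's own later, more general argument (the order-two sieving instance, as you say).

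The honest caveat is that your proposal never discharges either black box: in the analytic route the entire even case rests on quoting Stanley's self-complementary product formula and then carrying out a three-subcase $0/0$ bookkeeping that you describe but do not perform, and in the conceptual route the heart of the matter (the canonical-basis unitriangularity) is explicitly deferred to Stembridge. As a standalone proof this leaves the even case unexecuted; but since the paper itself treats the statement as a citation rather than something to be proved, your reduction-plus-references is at the appropriate level of rigor for the role the theorem plays here, and correctly identifies where the genuine content lies.
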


%\begin{theorem}[Stembridge,\cite{Stembridge}]
% If $P$ is a minuscule poset, then $F_m(P,-1)$ is the number of self-complementary order ideals of $P\times [m]$, where \[F_m(P,q)=\sum_{I\subseteq J(P\times [m])} q^{|I|}.\]
%\end{theorem}

%If we take $P=[k]\times [\ell]$ and $m=3$, then ideals of $P\times [3]$ are exactly Baxter plane partitions.

%\begin{corollary}
% The number of self-complementary plane partitions in a $k \times \ell \times 3$ box is equal to $[\Theta_{k,\ell}(q)]_{q=-1}$.
%\end{corollary}

By setting $a=k$, $b=\ell$, and $c=3$, we get the following result.

\begin{theorem}
\label{qequalsminusone}
$\Theta_{k,\ell}^\circlearrowleft=[\Theta_{k,\ell}(q)]_{q=-1}$
\end{theorem}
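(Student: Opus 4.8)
The plan is to reduce everything to plane partitions and then chain together the two enumerative results already established in this section. By Theorem~\ref{Theorem 1}, the quantity $\Theta_{k,\ell}^\circlearrowleft$ is well-defined: it equals the number of fixed points of the natural involution in \emph{any one} of the seven classes of Baxter objects of order $(k,\ell)$, since all the bijections commute with those involutions. First I would choose the class that is most tractable for enumeration, namely the Baxter plane partitions (class (G)), whose natural involution is complementation inside the $k\times\ell\times 3$ box. Consequently $\Theta_{k,\ell}^\circlearrowleft$ is exactly the number of self-complementary plane partitions in a $k\times\ell\times 3$ box.

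Next I would invoke Stembridge's theorem with $a=k$, $b=\ell$, $c=3$: the number of self-complementary plane partitions in a $k\times\ell\times 3$ box equals $M(k,\ell,3;-1)$, the MacMahon box generating function specialized at $q=-1$. Combined with the previous paragraph this yields $\Theta_{k,\ell}^\circlearrowleft = M(k,\ell,3;-1)$. Finally, Corollary~\ref{generatingfunction} identifies $M(k,\ell,3;q)$ with $\Theta_{k,\ell}(q)$ as an equality of polynomials in $q$, so evaluating both at $q=-1$ gives $M(k,\ell,3;-1)=[\Theta_{k,\ell}(q)]_{q=-1}$. Chaining the three equalities produces $\Theta_{k,\ell}^\circlearrowleft = [\Theta_{k,\ell}(q)]_{q=-1}$, as claimed.

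As for difficulties: essentially all of the work has been front-loaded into Theorem~\ref{Theorem 1} (which makes the fixed-point count independent of the chosen class), into Stembridge's count of self-complementary plane partitions, and into MacMahon's box formula as recorded in Corollary~\ref{generatingfunction}. The only points requiring care are conceptual rather than computational: one must note that the involution on class (G) is genuinely complementation, so that ``fixed under involution'' coincides precisely with ``self-complementary,'' and one must observe that specialization at $q=-1$ is legitimate because the two sides of Corollary~\ref{generatingfunction} are literally equal as elements of $\mathbb{N}[q]$. There is no real obstacle here; the proof is a short composition of the cited results.
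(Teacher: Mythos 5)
Your proposal is correct and follows exactly the paper's own route: reduce to class (G) via Theorem~\ref{Theorem 1}, apply Stembridge's theorem with $a=k$, $b=\ell$, $c=3$ to count self-complementary plane partitions as $M(k,\ell,3;-1)$, and conclude via the identification $M(k,\ell,3;q)=\Theta_{k,\ell}(q)$ from Corollary~\ref{generatingfunction}. Nothing is missing; this is precisely how the paper derives the theorem.
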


Although Theorem \ref{qequalsminusone} follows from Stembridge's result without any further computation, it turns out that it agrees with formulas for $\Theta_{k,\ell}^{\circlearrowleft}$ given previous by Felsner, Fusy, Orden, and Noy~\cite{BBFRO}, after correcting one of the cases of their formula, and applying a hypergeometric summation, as we explain next.

\begin{theorem}
\label{FFON}
 \begin{enumerate}
  \item If $k$ and $\ell$ are odd, then $\Theta_{k,\ell}^\circlearrowleft=0$
  \item If $k$ and $\ell$ are even, with $k=2\kappa$ and $\ell=2\lambda$, then for $N=\kappa+\lambda$, 
  \begin{align}
  \label{eveneven}
  \Theta_{2\kappa,2\lambda}^\circlearrowleft & = \sum_{r\geq 0} \frac{2r^3}{(N+1)(N+2)^2} \binom{N+2}{\kappa+1}\binom{N+2}{\kappa-r+1}\binom{N+2}{\kappa+r+1} & \notag \\ 
  & =\frac{\binom{N+1}{\kappa}\binom{N+1}{\kappa +1}\binom{N}{\kappa}}{(N+ 1)}
  \end{align}
  \item If $k$ is odd and $\ell$ is even \footnote{This case corrects Proposition 7.4, part iii in Felsner, Fusy, Orden, and Noy}, with $k=2\kappa +1$ and $\ell=2\lambda$, then for $N=\kappa+\lambda$,  
  \begin{align}
  \label{evenodd}
   \Theta_{2\kappa +1,2\lambda}^{\circlearrowleft}  & =  \Theta_{2\kappa,2\lambda}^\circlearrowleft + \sum_{r\geq 1} \frac{(\lambda-r+1)r(r+1)(2r+1)}{(\kappa +2+r)(N+1)(N+ 2)^2} \binom{N+2}{\kappa+1}\binom{N+2}{\kappa-r+1}\binom{N+2}{\kappa+r+1} & \notag\\
    & =\frac{\binom{N+1}{\kappa}\binom{N +1}{\kappa }\binom{N+1}{\kappa +1}}{(N+1)}
  \end{align}
  \item If $k$ is even and $\ell$ is odd, with $k=2\kappa$ and $\ell=2\lambda+1$, then $$\Theta_{2\kappa ,2\lambda+1}^{\circlearrowleft}=\Theta_{2\lambda +1,2\kappa}^{\circlearrowleft},$$ which is the same as (\ref{evenodd}) with $\kappa$ and $\lambda$ switched.
 \end{enumerate}
\end{theorem}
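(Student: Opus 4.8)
The plan is to route every case through the evaluation of $M(k,\ell,3;-1)$, which already equals $\Theta_{k,\ell}^\circlearrowleft$ by Theorem~\ref{qequalsminusone}, and to organize the work by the parities of $k$ and $\ell$. Part (1) I would dispatch combinatorially: a self-complementary plane partition $\pi$ in a $k\times\ell\times 3$ box satisfies $\pi_{i,j}+\pi_{k+1-i,\ell+1-j}=3$ in every cell, so its box-count is exactly half the volume, $|\pi|=\tfrac{3k\ell}{2}$. When $k$ and $\ell$ are both odd this is not an integer, so no such $\pi$ exists and $\Theta_{k,\ell}^\circlearrowleft=0$. (The same vanishing is visible in MacMahon's product, where at $q=-1$ an unpaired even factor forces a zero.)

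For parts (2) and (3) the first equality in each display is the combinatorial count of Felsner, Fusy, Orden and Noy, with the corrected summand reinstated in the odd/even case; the content I would actually prove is the \emph{second} equality, which collapses each sum over $r$ to a single product of three binomial coefficients. I would recognize each sum as a terminating hypergeometric series: factoring out the $r$-independent $\binom{N+2}{\kappa+1}$, the ratio of consecutive terms is a rational function of $r$, and the symmetric pair $\binom{N+2}{\kappa-r+1}\binom{N+2}{\kappa+r+1}$ together with the polynomial prefactor ($2r^3$, respectively $(\lambda-r+1)r(r+1)(2r+1)$) gives a very-well-poised shape. Applying the appropriate classical summation of Dixon--Dougall--Saalsch\"utz type---after splitting the polynomial weight into a short combination of shifted factorials so that a single closed-form evaluation applies---should yield $\frac{\binom{N+1}{\kappa}\binom{N+1}{\kappa+1}\binom{N}{\kappa}}{N+1}$ and $\frac{\binom{N+1}{\kappa}\binom{N+1}{\kappa}\binom{N+1}{\kappa+1}}{N+1}$ respectively.

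Part (4) then follows with no further computation: reflecting the $k\times\ell\times 3$ box in the plane interchanging its first two axes is a bijection on plane partitions that commutes with complementation, so $\Theta_{2\kappa,2\lambda+1}^\circlearrowleft=\Theta_{2\lambda+1,2\kappa}^\circlearrowleft$, which is formula~\eqref{evenodd} with $\kappa$ and $\lambda$ exchanged. As an independent sanity check on parts (2)--(4), each product can be compared directly with $M(k,\ell,3;-1)$ by evaluating MacMahon's product at $q=-1$, pairing numerator and denominator factors to resolve the $0/0$ indeterminacies before substituting.

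I expect the hypergeometric summation in parts (2) and (3) to be the main obstacle: the polynomial prefactors $2r^3$ and $(\lambda-r+1)r(r+1)(2r+1)$ are not in standard hypergeometric form, so the real work is in identifying precisely which classical ${}_pF_q$ evaluation applies and in rewriting the weights so that it can be invoked cleanly. By contrast, the parity argument of part (1) and the symmetry argument of part (4) are routine.
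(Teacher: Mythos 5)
Your proposal matches the paper's proof in all essentials: part (1) via the parity obstruction for self-complementary plane partitions (the paper also notes the lattice-path version and checks the surplus $(1+q)$ factor at $q=-1$), parts (2)--(3) by taking the (corrected) Felsner--Fusy--Orden--Noy sums as input and collapsing them with a classical well-poised summation --- the paper uses Bailey's well-poised ${}_5F_4$ evaluation, with $a=c=2$, $d=-\kappa$, $e=-\lambda$ in case (2) and $a=3$, $c=1-\lambda$, $d=-\kappa$, $e=2$ in case (3), where it degenerates to a ${}_4F_3$ --- and part (4) by the $k\leftrightarrow\ell$ box symmetry. The only deviation is cosmetic: no splitting of the polynomial prefactors into combinations of shifted factorials is needed, since $2r^3$ and $(\lambda-r+1)r(r+1)(2r+1)$ are exactly the very-well-poised factors, so each sum is a \emph{single} classical series evaluated in one stroke.
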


Before embarking on the proof, we review the approach used by Felsner, Fusy, Orden, and Noy. They counted non-intersecting triples of lattice paths as in \eqref{latticepoints} fixed under rotation. The rotation will be about the point $(\frac{k}{2},\frac{\ell}{2})$, and a rotationally invariant Baxter path will be uniquely determined by what it does below the line $x+y=\frac{k+\ell}{2}$. In \cite{BBFRO}, the authors show that all rotationally invariant Baxter paths arise from triples of lattice paths from $A_1$, $A_2$, and $A_3$ to specific points below the line $x+y=\frac{k+\ell}{2}$, which depend on the parity of $k$ and $\ell$, and also a parameter $r$. These triples of lattice paths can be counted by the Gessel-Viennot-Lindstrom lemma~\cite{GVL}, and they obtain their formula by summing the resulting expressions over all possible parameters $r$ for $k$ and $\ell$ of fixed parity, resulting in the first parts of (\ref{eveneven}) and (\ref{evenodd}).

\begin{proof}[Proof of Theorem~\ref{FFON}]

\vskip .1in
\noindent
\newline
{\sf Proof of Assertion 1.} When $k$ and $\ell$ are both odd, one can easily see $\Theta_{k,\ell}^{\circlearrowleft}=0$. In terms of rotationally invariant lattice paths, the central point of rotation will have two non-integral coordinates. In order for the path from $A_2$ to meet up with itself upon rotation, it would have to go through this point, but lattice paths always have at least one integral coordinate. One can also look at the plane partition model, where the $k\times\ell\times 3$ box has an odd number of boxes, so the size of any plane partition in that box must have opposite parity of its complement. Correspondingly, we check that $[\Theta_{k,\ell}(q)]_{q=-1}=0$. In this case, the denominator of \eqref{defofqanalog} only has one factor of $1+q$, coming from $\qbin{n+1}{1}{q}$, whereas the numerator will have two factors of $(1+q)$, coming from each of $\qbin{n+1}{k}{q}$ and $\qbin{n+1}{k+2}{q}$.

\vskip .1in
\noindent
{\sf Proof of Assertion 2.} When $k=2\kappa$ and $\ell=2\lambda$ are both even, the resulting summation in (\ref{eveneven}) is (after factoring out a constant) the hypergeometric series
\[{}_5 F_4 \left[ \left.
\begin{matrix}
 2 & 2 & 2 & -\kappa, & -\lambda \\
  & 1 & 1 & \kappa+3 & \lambda+3 \\
\end{matrix}
\right| 1 \right],   \]

\noindent where we recall that \[{}_r F_s \left[ \left.
\begin{matrix}
 a_1 & a_2 & \ldots & a_r  \\
   b_1 & b_2 & \ldots & b_s \\
\end{matrix}
\right| z \right]=\sum_{n=1}^{\infty} \frac{(a_1)_n\ldots (a_r)_n}{(b_1)_n\ldots (b_s)_n}z^n  , \]
for $(x)_n=x(x-1)\ldots (x-n+1)$.

\noindent This can be evaluated using the formula for a well-poised ${}_5 F_4$ ~\cite[(4.4.1) p.27]{Bailey},

\begin{align}
 & {}_5 F_4 \left[ \left. \begin{matrix} a & \frac{1}{2} a+1 & c & d & e \\ & \frac{1}{2} a & 1+a-c & 1+a-d & 1+a-e\\ \end{matrix} \right| 1 \right]\label{5F4}\\ 
 & =\frac{\Gamma(1+a-c)\Gamma(1+a-d)\Gamma(1+a-e)\Gamma(1+a-c-d-e)}{\Gamma(1+a)\Gamma(1+a-d-e)\Gamma(1+a-c-e)\Gamma(1+a-c-d)}. \notag
\end{align}

\noindent By choosing $a=c=2$, $d=-\kappa$, and $e=-\lambda$, one can check this gives (\ref{eveneven}).

\vskip .1in
\noindent
{\sf Proof of Assertion 3.} When $k=2\kappa+1$ is odd and $\ell=2\lambda$ is even, the summation is (again, after factoring out a constant) the hypergeometric series 
\begin{equation*}
{}_4 F_3 \left[ \left.
\begin{matrix}
 3, & 5/2, & 1-\lambda, & -\kappa; \\
  & 3/2, & \lambda +3, & \kappa+4  \\
\end{matrix}  
\right| 1 \right]  .
\end{equation*}

\noindent This can also be evaluated using \eqref{5F4}, but with choice of parameters $a=3$, $c=1-\lambda$, $d=-\kappa$, and $e=2$ (note that as $e=1+a-e$, the ${}_5 F_4$ reduces to a ${}_4 F_3$), and one can check that this gives (\ref{evenodd}). 

\vskip .1in
\noindent
{\sf Proof of Assertion 4.} We exploit natural symmetry that forces $\Theta_{k,\ell}^\circlearrowleft=\Theta_{\ell,k}^\circlearrowleft$.

\end{proof}

\section{A possible $q$-Baxter number}

The formula (\ref{defofqanalog}) gives a meaningful $q$-analog for $\Theta_{k,\ell}$, and we would like to extend it to a $q$-analog for Baxter numbers. A natural way in which one can generalize is inspired by placing Baxter numbers within the family of Hoggatt sums~\cite{FielderAlford}. 

Let $H(m,k,\ell)$ be the number of plane partitions that fit in a $k\times\ell\times m$ box, which we will call the \emph{MacMahon numbers}. Via MacMahon's plane partition formula given in \eqref{MacMahon}, these can be simply expressed as \begin{equation}H(m,k,l)=\frac{\prod_{i=0}^{m-1} \binom{k+\ell}{k+i}}{\prod_{j=1}^{m-1} \binom{k+\ell}{j}}.\label{MacMahonproduct}\end{equation} We also consider a natural $q$-shift of MacMahon's formula, \begin{equation}H(m,k,\ell;q)=q^{m\binom{k+1}{2}}\sum_{\pi} q^{|\pi|},\label{qMacMahon}\end{equation} where we sum over all plane partitions $\pi$ in a $k\times\ell\times m$ box. Note that $H(m,k,\ell,1)=H(m,k,\ell)$.

We will define the \emph{Hoggatt sum} and \emph{$q$-Hoggatt sum} to respectively be

\[H(n,m)=\sum_{k+\ell=n} H(m,k,\ell)\]

\[H(n,m;q)=\sum_{k+\ell=n} H(m,k,\ell;q)\]

%\begin{equation*} \begin{array}{cc}H(n,m)=\sum_{k+\ell=n} H(m,k,\ell) & H(n,m;q)=\sum_{k+\ell=n} H(m,k,\ell;q).\\ \end{array} \end{equation*}

\begin{prop}
 $H(n,m;q)$ has symmetric coefficients as a polynomial in $q$.
\end{prop}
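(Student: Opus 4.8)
The plan is to exploit two symmetries of the box generating function $M(k,\ell,m;q)=\sum_{\pi}q^{|\pi|}$ and to verify that the $q$-shift $q^{m\binom{k+1}{2}}$ appearing in \eqref{qMacMahon} is calibrated exactly so that the whole Hoggatt sum becomes a palindrome, even though the individual summands are not palindromic about a common axis. Concretely, I would show that $q^{D}H(n,m;1/q)=H(n,m;q)$ for $D=\frac{mn(n+1)}{2}$, since this identity is precisely the assertion that the coefficient of $q^{j}$ equals that of $q^{D-j}$, i.e.\ that $H(n,m;q)$ has symmetric coefficients (centered at $\frac{mn(n+1)}{4}$).

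First I would record the two relevant symmetries of $M(k,\ell,m;q)$. Complementation of plane partitions in the $k\times\ell\times m$ box is a bijection sending a partition with $|\pi|$ boxes to one with $k\ell m-|\pi|$ boxes, which gives palindromicity $q^{k\ell m}M(k,\ell,m;1/q)=M(k,\ell,m;q)$; this is the general-$m$ version of the symmetry already noted for $m=3$. Transposing each horizontal layer gives a bijection between plane partitions in the $k\times\ell\times m$ box and those in the $\ell\times k\times m$ box preserving the number of boxes, so $M(k,\ell,m;q)=M(\ell,k,m;q)$.

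The main computation proceeds as follows. Starting from $H(n,m;1/q)=\sum_{k+\ell=n}q^{-m\binom{k+1}{2}}M(k,\ell,m;1/q)$, apply palindromicity to rewrite the summand as $q^{-m\binom{k+1}{2}-k\ell m}M(k,\ell,m;q)$, then multiply by $q^{D}$. Now reindex $k\mapsto n-k$ (equivalently, swap the roles of $k$ and $\ell$) and use the transpose symmetry $M(\ell,k,m;q)=M(k,\ell,m;q)$; the exponent that results is $D-m\binom{\ell+1}{2}-k\ell m$. The entire argument then hinges on the elementary identity
\[
m\binom{k+1}{2}+m\binom{\ell+1}{2}+k\ell m=\frac{mn(n+1)}{2}=D\qquad(k+\ell=n),
\]
which follows from expanding $(k+\ell)^{2}=k^{2}+2k\ell+\ell^{2}$. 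This collapses the exponent back to $m\binom{k+1}{2}$, so the reindexed sum is literally $H(n,m;q)$, giving $q^{D}H(n,m;1/q)=H(n,m;q)$.

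The only real subtlety, and the reason the claim is not immediate, is that each summand $H(m,k,\ell;q)$ is itself symmetric but about the center $\frac{mk(n+1)}{2}$, which depends on $k$; hence there is no term-by-term symmetry about a fixed axis. The content of the proof is that the reflection $q\mapsto 1/q$ pairs the $(k,\ell)$-summand with the $(\ell,k)$-summand, and the displayed identity shows their two centers are symmetric about $\frac{mn(n+1)}{4}$, while the transpose symmetry of $M$ makes the pairing an exact equality of polynomials rather than a mere numerical coincidence. I expect the bookkeeping of exponents through the reindexing step to be the only place that requires care.
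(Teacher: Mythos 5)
Your proposal is correct and takes essentially the same approach as the paper: the paper's involution, pairing $\pi$ in the $k\times(n-k)\times m$ box with the complement of its transpose $\bar{\pi}^c$ in the $(n-k)\times k\times m$ box, is exactly your combination of the complementation palindromicity $q^{k\ell m}M(k,\ell,m;1/q)=M(k,\ell,m;q)$ with the transpose symmetry $M(k,\ell,m;q)=M(\ell,k,m;q)$, and both arguments reduce to the same exponent identity $m\binom{k+1}{2}+k\ell m+m\binom{\ell+1}{2}=m\binom{n+1}{2}$. Your exponent bookkeeping is in fact a useful check on the paper, whose expression $mn(n-k)-|\pi|$ for $|\bar{\pi}^c|$ contains a typo for the box volume $mk(n-k)$.
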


%\noindent This fact is elementary to prove, and the proof will be suppressed in this extended abstract.

\begin{proof}
One can easily check that the degree of $H(n,m,\ell;q)$ will always be $m\binom{n+1}{2}$. Define an involution on the set of plane partitions that contribute to $H(n,m;q)$ by pairing $\pi$ in the $k\times (n-k)\times m$ box with $\bar{\pi}^c$ in the $(n-k)\times k\times m$ box, where $\bar{\pi}$ is the plane partition in $(n-k)\times k\times m$ box naturally identified with $\pi$. It suffices to check that the degrees of the contributions from $\pi$ and $\bar{\pi}^c$ add up to $m\binom{n+1}{2}$, the degree of the polynomial. The contribution from $\pi$ has degree $|\pi|+m\binom{k+1}{2}$, and the contribution from $\bar{\pi}^c$ has degree $|\bar{\pi}^c|+m\binom{n-k+1}{2}=(mn(n-k)-|\pi|)+m\binom{n-k+1}{2}$, and these two terms indeed add up to $m\binom{n+1}{2}$.
 
\end{proof}

For $m=1$, the MacMahon numbers are $H(1,k,\ell)=\binom{k+\ell}{k}$, the \emph{binomial coefficients}, with meaningful $q$-analogue $$H(1,k,\ell;q)=q^{\binom{k+1}{2}}\qbin{k+\ell}{k}{q}.$$ The Hoggatt sum is $H(n,1)=2^n$, and the $q$-Hoggatt sum is $$H(n,1;q)=\sum_{k=0}^n q^{\binom{k+1}{2}}\qbin{n}{k}{q}=(-q;q)_{n+1}=(1+q)(1+q^2)\ldots(1+q^n).$$

%\begin{equation}H(n,1;q)=\sum_{k=0}^n q^{\binom{k+1}{2}}\qbin{n}{k}{q}=(-q;q)_{n+1}=(1+q)(1+q^2)\ldots(1+q^n).\label{qbinomialeqn}\end{equation}

%\begin{equation} H(1,k,\ell;q)=q^{\binom{k+1}{2}}\qbin{k+\ell}{k}{q}. \end{equation} 

For $m=2$, the MacMahon numbers are $H(2,k,\ell)=\frac{1}{k+\ell}\binom{k+\ell}{k}\binom{k+\ell}{k+1}$, the \emph{Narayana numbers}, with $q$-analogue $$H(k,\ell,2;q)=\frac{q^{k^2+k}}{[k+\ell]_q}\qbin{k+\ell}{k}{q}\qbin{k+\ell}{k+1}{q}$$ the $q$-Narayana numbers~\cite{qnarayana}. The Hoggatt sum is $H(n,2)=\frac{1}{n+1}\binom{2n}{n}$, the Catalan number, whereas the $q$-Hoggatt sum is $$H(n,2;q)=\frac{1}{[n+1]_q}\qbin{2n}{n}{q},$$ the \emph{$q$-Catalan number}~\cite{qnarayana}.

% \begin{equation}H(k,\ell,2;q)=\frac{q^{k^2+k}}{[k+\ell]_q}\qbin{k+\ell}{k}{q}\qbin{k+\ell}{k+1}{q},\label{qnarayanaeqn}\end{equation} 

%\begin{equation}H(n,2;q)=\frac{1}{[n+1]_q}\qbin{2n}{n}{q},\label{qcatalaneqn} \end{equation} 

For $m=3$, the MacMahon numbers are $H(3,k,\ell)=\Theta_{k,\ell}$, with $q$-analogue $H(3,k,\ell;q)=\Theta_{k,\ell}(q)$. The Hoggatt sums are $H(n,3)=B(n+1)$, the Baxter number, and so one might consider the third $q$-Hoggatt sum \begin{equation}H(n,3;q)=\sum_{k+\ell=n} q^{3\binom{k+1}{2}}\Theta_{k,\ell}(q) \label{qbaxtereqn} \end{equation} as a $q$-Baxter number. However, we do not know if it has any nice combinatorial interpretations like the cases $m=1,2$. Also, it is not known whether $H(4,k,\ell;q)$ or $H(n,4;q)$ has any nice combinatorial interpretation outside of plane partitions, as there is for $m\leq 3$.

\section{Descent Generating Function}

Not much has been said about the generating function for Baxter permutations with respect to descents, 

\[B(n,t)=\sum_{k=0}^{n-1} \frac{\binom{n+1}{k}\binom{n+1}{k+1}\binom{n+1}{k+2}}{\binom{n+1}{1}\binom{n+1}{2}}t^k, \]

\noindent so we will make a few brief comments. First, we are able to show that $B(n,t)$ is real rooted by using the theory of multiplier sequences. 

\begin{definition}
Say that $\{a_k\}_{k\geq 0}$ is a multiplier sequence if for every polynomial $\sum_{k=0}^n b_k t^k$ with all real roots, $\sum_{k=0}^n a_k b_k t^k$ also has all real roots.
\end{definition}

Multiplier sequences satisfy some basic properties, which are outlined in Craven and Csordas~\cite{Craven}.

\begin{prop}
\label{multseq}
Let $\{a_k\}_{k\geq 0}$ be a multiplier sequence.

\begin{itemize}
 \item[(a)] $\sum_{k=0}^n a_k t^k$ is real rooted.
 \item[(b)] $\{a_{k+r}\}_{k\geq 0}$ for $r\geq 0$ is also a multiplier sequence.
 \item[(c)] If $\{b_k\}_{k\geq 0}$ is another multiplier sequence, then $\{a_k b_k\}_{k\geq 0}$ is also a multiplier sequence.
 \item[(d)] If $\{a_r,a_{r+1},\ldots,a_{r+s}\}$ is a segment of a multiplier sequence, and $\sum_{k=0}^n b_k t^k$ is real rooted with $n\leq s$, then $\sum_{k=0}^n a_{r+s-k} b_k t^k$ is real rooted
\end{itemize}
\end{prop}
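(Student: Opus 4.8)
The plan is to get assertions (a), (b), and (c) as quick formal consequences of the definition, and to reduce the real content, assertion (d), to (b) together with one elementary fact about reversals. That fact, which I would record first, is: if $p(t)=\sum_{k=0}^n b_k t^k$ is real-rooted, then for any $s\geq n$ the padded reversal $t^{s}p(1/t)=\sum_{k=0}^{s}b_{s-k}t^{k}$ is again real-rooted, since its nonzero roots are the reciprocals of the nonzero roots of $p$ and the drop in degree merely contributes extra roots at $0$.

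For (a) I would feed the definition the real-rooted polynomial $(1+t)^n=\sum_{k=0}^n\binom{n}{k}t^k$, all of whose roots equal $-1$; the definition then returns real-rootedness of the Jensen polynomial $\sum_{k=0}^n a_k\binom{n}{k}t^k$, which is the form in which real-rootedness of the $a_k$ gets used. For (b) it suffices to treat $r=1$ and iterate: given a real-rooted $p(t)=\sum_{k=0}^n b_k t^k$, the polynomial $tp(t)$ is real-rooted (it only gains a root at $0$), so applying $\{a_k\}$ to $tp(t)=\sum_{m\geq1}b_{m-1}t^m$ gives $\sum_{m\geq1}a_m b_{m-1}t^m=t\sum_{k=0}^n a_{k+1}b_k t^k$, which is real-rooted; stripping the factor $t$ shows $\sum_{k=0}^n a_{k+1}b_k t^k$ is real-rooted, so $\{a_{k+1}\}$ is a multiplier sequence. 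Part (c) is immediate: for a real-rooted $\sum c_k t^k$, apply $\{b_k\}$ first and then $\{a_k\}$; each step preserves real-rootedness and the composite is $\sum (a_k b_k)c_k t^k$.

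The substantive assertion is (d), which I would obtain by sandwiching the reversal fact between two applications of (b). Let $\{a_j\}_{j\geq0}$ be a multiplier sequence containing the given segment, and let $p(t)=\sum_{k=0}^n b_k t^k$ be real-rooted with $n\leq s$. Pad-and-reverse $p$ to the real-rooted polynomial $\sum_{j=0}^s b_{s-j}t^j$, then apply the shifted sequence $\{a_{j+r}\}_{j\geq0}$, which is a multiplier sequence by (b), producing the real-rooted $\sum_{j=0}^s a_{j+r}b_{s-j}t^j$. Setting $k=s-j$ rewrites this as $\sum_{k=0}^s a_{r+s-k}b_k t^{s-k}$; since $b_k=0$ for $k>n$, it equals $t^{s-n}$ times the reversal of $\sum_{k=0}^n a_{r+s-k}b_k t^k$, so stripping $t^{s-n}$ and reversing once more yields exactly $\sum_{k=0}^n a_{r+s-k}b_k t^k$, as claimed. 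The reason only a finite \emph{segment} is needed is that applying $\{a_{j+r}\}$ to a polynomial of degree at most $s$ invokes only the entries $a_r,a_{r+1},\dots,a_{r+s}$. The main obstacle is precisely this bookkeeping in (d): aligning the reversed weights $a_{r+s-k}$ with the correct indices, tracking the spurious roots at $0$ introduced by the degree-$s$ padding, and checking at each stage that reversal neither creates nor destroys real-rootedness.
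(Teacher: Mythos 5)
Your arguments for parts (b), (c), and (d) are correct, and since the paper offers no internal proof of this proposition (it simply points to Craven and Csordas), yours is the only argument on the table: the shift trick for (b) (multiply by $t$, apply the sequence, strip the root at $0$, iterate), the composition argument for (c), and the pad--reverse--apply--reverse sandwich for (d) are exactly the standard proofs. Your bookkeeping in (d) checks out: the padded reversal $\sum_{j=0}^{s}b_{s-j}t^{j}$ is real rooted because its nonzero roots are reciprocals of those of $p$, applying the shifted sequence $\{a_{j+r}\}$ (legitimate by your (b)) uses only the entries $a_r,\ldots,a_{r+s}$, and the spurious factor $t^{s-n}$ strips off harmlessly before the final reversal.

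However, your treatment of (a) glosses over a genuine problem: part (a) as stated is false, and what you prove is a different (true) statement. The constant sequence $a_k\equiv 1$ is trivially a multiplier sequence, yet $1+t+t^2$ is not real rooted; the correct classical assertion --- which your computation with $(1+t)^n$ actually establishes --- is that the \emph{Jensen polynomial} $\sum_{k=0}^{n}\binom{n}{k}a_k t^k$ is real rooted. You should state this binomial correction explicitly rather than present your argument as if it discharged (a) as written. The distinction matters downstream: the paper's proof that $B(n,t)$ is real rooted invokes literal (a) to claim that $\sum_{k=0}^{n-1}\frac{t^k}{k!(k+1)!(k+2)!}$ is real rooted, and this already fails at degree $3$ --- clearing denominators gives $t^3+60t^2+1440t+8640$, whose derivative $3t^2+120t+1440$ has negative discriminant, so the cubic has only one real root. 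The application can be repaired using your corrected (a): start from $(1+t)^{n-1}$, i.e.\ from the real-rooted $\sum_{k=0}^{n-1}\frac{t^k}{k!(n-1-k)!}$, apply the multiplier sequences $\{\frac{1}{(k+1)!}\}$ and $\{\frac{1}{(k+2)!}\}$ via (b) and (c), and then apply (d) twice, with the segments $\{\frac{1}{1!},\ldots,\frac{1}{n!}\}$ and $\{\frac{1}{2!},\ldots,\frac{1}{(n+1)!}\}$ (each of length $n$, so $s=n-1$ matches the degree), to install the remaining factors $\frac{1}{(n-k)!}$ and $\frac{1}{(n+1-k)!}$.
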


\begin{prop}
$B(n,t)$ has only real roots.
\end{prop}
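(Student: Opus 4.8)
The plan is to exhibit $B(n,t)$, up to a positive constant, as the image of a manifestly real-rooted polynomial under a chain of operations each of which preserves real-rootedness, all built from Proposition~\ref{multseq}. Since the denominator $\binom{n+1}{1}\binom{n+1}{2}$ is a positive constant independent of $k$, it suffices to prove that $P(t):=\sum_{k=0}^{n-1}\binom{n+1}{k}\binom{n+1}{k+1}\binom{n+1}{k+2}t^k$ is real-rooted; note $P$ has degree exactly $n-1$, since its top coefficient is $\binom{n+1}{2}(n+1)\neq 0$ while the $k=n$ term vanishes. The engine of the argument is the classical fact that $\{1/k!\}_{k\geq 0}$ is a multiplier sequence. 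Granting this, Proposition~\ref{multseq}(b) shows each shift $\{1/(k+d)!\}_{k\geq 0}$ is again a multiplier sequence, so for any polynomial of degree $\leq N$, coefficientwise multiplication by $1/(k+d)!$ preserves real-rootedness (by direct application of the definition), while coefficientwise multiplication by $1/(c-k)!$ for any fixed $c\geq N$ also preserves real-rootedness, by applying Proposition~\ref{multseq}(d) to the segment of $\{1/j!\}$ with $r+s=c$ and $s\geq N$ (so that $a_{r+s-k}=1/(c-k)!$).

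Next I would start from $(1+t)^{n-1}=\sum_{k=0}^{n-1}\binom{n-1}{k}t^k$, which is real-rooted, and convert its coefficients into those of $P$. A single factorial cancellation gives the identity
\[\binom{n+1}{k}\binom{n+1}{k+1}\binom{n+1}{k+2}=\frac{((n+1)!)^3}{(n-1)!}\cdot\frac{1}{(k+1)!}\cdot\frac{1}{(k+2)!}\cdot\frac{1}{(n-k)!}\cdot\frac{1}{(n+1-k)!}\cdot\binom{n-1}{k},\]
in which the factor $(n-1-k)!$ coming from $\binom{n-1}{k}$ cancels cleanly against a descending factorial. Thus $P$ is a positive constant times the result of applying to $(1+t)^{n-1}$ the two ascending operations with $d=1,2$ and the two descending operations with $c=n$ and $c=n+1$. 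Each ascending multiplier is admissible by the previous paragraph, and each descending multiplier is admissible because $c=n$ and $c=n+1$ both exceed the degree $n-1$ of every intermediate polynomial (all four multipliers are positive on $0\leq k\leq n-1$, so the degree remains $n-1$ throughout). Applying the four operations in succession and discarding the positive constant yields $P$, which is therefore real-rooted, and hence so is $B(n,t)$.

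The main obstacle is bookkeeping rather than depth. One must confirm that $\{1/k!\}$ is genuinely a multiplier sequence, via the Laguerre--P\'olya/P\'olya--Schur characterization (the associated function $\sum_k t^k/(k!)^2$ has only negative real zeros); and, more delicately, one must check that the degree hypothesis in Proposition~\ref{multseq}(d) (that the polynomial's degree is at most $s$) holds at each descending step. This is precisely what forces the matching between the shifts $k+1,k+2$ in the binomial factors and the reversal moduli $c=n,n+1$: rewriting $\binom{n+1}{k+1}=\binom{n+1}{n-k}$ and $\binom{n+1}{k+2}=\binom{n+1}{n-1-k}$ makes the descending operations land at $c=n$ and $c=n+1$, both comfortably above the degree $n-1$. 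Once this alignment and the lone factorial cancellation are verified, the conclusion is immediate.
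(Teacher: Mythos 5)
Your proof is correct and takes essentially the same approach as the paper's: both factor the coefficient $\binom{n+1}{k}\binom{n+1}{k+1}\binom{n+1}{k+2}$ into shifted and reversed factorials, then invoke the classical multiplier sequence $\{1/k!\}$ together with parts (b), (c), (d) of Proposition~\ref{multseq}. The only cosmetic difference is that you start from $(1+t)^{n-1}$, absorbing the factor $\frac{1}{k!(n-1-k)!}$ into a binomial coefficient and using two descending steps, where the paper instead starts from $\sum_{k=0}^{n-1}\frac{t^k}{k!(k+1)!(k+2)!}$ and applies part (d) three times.
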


\begin{proof}

It suffices to check that $$\sum_{k=0}^{n-1} \frac{t^k}{k!(k+1)!(k+2)!(n+1-k)!(n-k)!(n-k-1)!}$$ is real rooted, as $B(n,t)$ is this sum times the constant $2(n+1)!n!(n-1)!$.

It is well known that $\{\frac{1}{k!}\}$ is a multiplier sequence (Theorem 2.4.1,~\cite{Brenti}), so we just need to apply a number of the transformations from Proposition~\ref{multseq}. In particular, (b) tells us that $\{\frac{1}{(k+1)!}\}$ and $\{\frac{1}{(k+2)!}\}$ will be multiplier sequence. So by (c), $\{\frac{1}{k!(k+1)!(k+2)!}\}$ is a multiplier sequence, and $\sum_{k=0}^{n-1} \frac{1}{k!(k+1)!(k+2)!} t^k$ is real rooted. Now apply part (d) to the segments $\{\frac{1}{1!},\ldots, \frac{1}{(n-1)!}\}$, $\{\frac{1}{2!},\ldots, \frac{1}{n!}\}$, and $\{\frac{1}{3!},\ldots, \frac{1}{(n+1)!}\}$ to get the desired result.
\end{proof}

We know that if $w=w_1\ldots w_n$ is a Baxter permutation with $k$ descents, then $w w_0=w_n\ldots w_1$ will be a Baxter permutation with $n-k-1$ descents. This tells us that $B(n,t)$ should have symmetric coefficients. Following Gal~\cite{Gal}, one can consider the expansion of polynomials of degree $n$ with symmetric coefficients in terms of the basis $\{t^i(1+t)^{n-2i}\}_{0\leq i\leq \lfloor\frac{n}{2}\rfloor}$.

\begin{definition}

Let $P(t)$ be a polynomial of degree $n$ with symmetric coefficients, and let $P(t)=\sum_{k=0}^{\lfloor\frac{n}{2}\rfloor} \gamma_i t^i(1+t)^{n-2i}$. Then we say that $P(t)$ is \emph{$\gamma$-positive} if $\gamma_i\geq 0$.

\end{definition}

Gal has conjectured that the $h$-vector for any flag simplicial polytope is $\gamma$-positive~\cite{Gal}. Ideally, one would like to be able to find a polytope related to Baxter objects with $B(n,t)$ as its h-vector.

 As $B(n,t)$ is real-rooted with positive, symmetric coefficients, it should be $\gamma$-positive~\cite[Remark 3.1.1]{Gal}. Since this generating function can be seen to be a well-poised ${}_3F_2$, we can apply the well-poised ${}_3F_2$ quadratic tranformation \cite[p.~97]{Bailey}

\[(1-z)^a {}_3 F_2 \left[ \left. \begin{matrix} a & b & c \\ & 2 & 1 \\ \end{matrix} \right| z \right]={}_3 F_2 \left[ \left. \begin{matrix} \frac{1}{2} a & \frac{1}{2}(a+1) & 1+a-b-c \\ & 1+a-b a & 1+a-c \\ \end{matrix} \right| -\frac{4z}{(1-z)^2} \right]. \]

\noindent Setting $z=-t$, $a= -n+1$, $b=-n$, $c=-n-1$ gives

\[B(n,t)=\sum_{k=0}^{\lfloor n/2 \rfloor} \gamma_i t^i(1+t)^{n-2i}\]

\noindent for 

\begin{equation}
\label{gamma}
\gamma_i=\frac{(n+3)_i(1-n)_{2i}}{(1)_i(2)_i(3)_i}.\footnote{Thanks to Dennis Stanton for suggesting this transformation}
\end{equation}

Ideally, one would like a combinatorial interpretation for these $\gamma_i$'s. The Eulerian polynomials, which are the generating function over all permutations with respect to descents, are known to be $\gamma$-positive. Shapiro, Woan, and Getu have shown that the $\gamma_i$'s for Eulerian polynomials count the number of permutations in $S_n$ with no consecutive descents and no final descent. Furthermore, Foata and Sch\"utzenberger have a formula that gives $\gamma_i$ as a weighted count of all permutations of length $n$ with $i$ peaks. The natural restriction of these interpretations from all permutations to Baxter permutations does not give the correct $\gamma_i$'s, however.

Furthermore, a similar identity can be applied when we consider the $q,t$ version of the above identity for $$B(n,t,q)=\sum_{k=0}^{n-1} \frac{\qbin{n+1}{k}{q}\qbin{n+1}{k+1}{q}\qbin{n+1}{k+2}{q}}{\qbin{n+1}{2}{q}\qbin{n+1}{1}{q}}q^{3\binom{k+1}{2}}t^k.$$ Applying the Sears-Carlitz transformation of a terminating well-poised ${}_3\phi_2$ ~\cite[(III.14)]{GasperRahman}\footnote{Again, thanks to Dennis Stanton for suggesting this $q$-analog of the previous transformation} gives the expression

\[B(n,t,q)=\sum_{i=0}^{\lfloor n/2 \rfloor} q^{3\binom{i}{2}}\frac{(q^{n-2i};q)_{2i}(q^{n+3};q)_i}{(q^3;q)_i(q^2;q)_i(q;q)_i}t^{i}\frac{(-tq^{n-2+i};q)_\infty}{(-tq^{2n-3-i};q)_\infty}.\]

As an example, this formula gives the expansion \[B(4,t,q)=1+(q^6+q^5+2 q^4+2 q^3+2 q^2+q+1)t+q^3(q^6+q^5+2 q^4+2 q^3+2 q^2+q+1)t^2+q^9t^3\]
\[=(1+tq^2)(1+tq^3)(1+tq^4)+(q^6+q^5+q^4+q^3+q^2+q+1)t(1+tq^3).\]

\noindent This would suggest that $$\gamma_i(q)=q^{3\binom{i}{2}}\frac{(q^{n-2i};q)_{2i}(q^{n+3};q)_i}{(q^3;q)_i(q^2;q)_i(q;q)_i}$$ is a natural $q$-analog to the $\gamma_i$ defined in (\ref{gamma}). If a combinatorial interpretation is found for the $\gamma_i$'s, one might expect that $\gamma_i(q)$ will give their distribution with respect to some natural statistic.

\section{Acknowledgements}
\label{sec:ack}
This paper extends the results and provides proofs from a previously published FPSAC extended abstract.~\cite{Dilks}

The author would like to thank Nathan Reading, Jang Soo Kim for helping make the connection to plane partitions, Dennis Stanton for help with hypergeometric series, and to his adviser Vic Reiner for suggesting the problem and for numerous helpful discussions.

\newpage 
\bibliographystyle{abbrv}
\bibliography{baxterinvolution3}
\label{sec:biblio}

\appendix
\section{Appendix}

\begin{table}[!b]
\caption{Baxter objects of order $(k,\ell)=(3,0)$}
\label{Table 1}
\centering
$\Theta_{k,\ell}(q)=1$\\
$\Theta_{k,\ell}(1) =1$\\
$\Theta_{k,\ell}(-1)=1$\\
\begin{tabular}[t]{|c|c|c|c|c|c|c|}

 \hline
Baxter & Twisted Baxter  & Baxter & Baxter  & Diagonal  & Baxter Plane  \\
Permutations &  Permutations  & Paths & Tableaux &  Rectangulations &  Partitions  \\

 \hline
 \hline
 $\begin{array}{c} 1234\\ \circlearrowleft\\ \end{array}$  & $\begin{array}{c} 1234\\ \circlearrowleft\\ \end{array}$  & $\begin{array}{c} \begin{tikzpicture}[scale=.2]
 \draw[gray,very thin] (0,0) grid (5,5);
 \draw[very thick] (2,0) -- (5,0);
 \draw[very thick] (1,1) -- (4,1);
 \draw[very thick] (0,2) -- (3,2);
 \filldraw (0,2) circle (2pt)
           (1,1) circle (2pt)
           (2,0) circle (2pt)
           (5,0) circle (2pt)
           (4,1) circle (2pt)
           (3,2) circle (2pt);

\end{tikzpicture} \\ \circlearrowleft \\ \end{array}$  &  $\begin{array}{c} \young(1369,258\eleven,47\ten\twelve) \\ \circlearrowleft \\ \end{array}$ & $\begin{array}{c} \tikzstyle{my help lines}=[gray,thick,dashed]
\begin{tikzpicture}[scale=.25]
 \draw (0,0) rectangle (1,4);
 \draw (1,0) rectangle (2,4);
 \draw (2,0) rectangle (3,4);
 \draw (3,0) rectangle (4,4);
 \draw[style=my help lines] (0,4) -- (4,0);
\end{tikzpicture} \\ \circlearrowleft \\ \end{array}$  & $\begin{array}{c} \emptyset \\ \circlearrowleft \\ \end{array}$ \\

\hline
\end{tabular}
\end{table}

\begin{table}[!thb]
\caption{Baxter objects of order $(k,\ell)=(0,3)$}
\label{Table 4}
\centering
$\Theta_{k,\ell}(q)=1$\\
$\Theta_{k,\ell}(1) =1$\\
$\Theta_{k,\ell}(-1)=1$\\
\begin{tabular}[t]{|c|c|c|c|c|c|c|}

\hline

Baxter & Twisted Baxter  & Baxter & Baxter  & Diagonal  & Baxter Plane  \\
Permutations &  Permutations  & Paths & Tableaux &  Rectangulations &  Partitions  \\\hline
 \hline
 $\begin{array}{c} 4321 \\ \circlearrowleft \\ \end{array}$ & $\begin{array}{c} 4321 \\ \circlearrowleft \\ \end{array}$ & $\begin{array}{c} \begin{tikzpicture}[scale=.2]
 \draw[gray,very thin] (0,0) grid (5,5);
 \draw[very thick] (2,0) -- (2,3);
 \draw[very thick] (1,1) -- (1,4);
 \draw[very thick] (0,2) -- (0,5);
 \filldraw (0,2) circle (2pt)
           (1,1) circle (2pt)
           (2,0) circle (2pt)
           (0,5) circle (2pt)
           (2,3) circle (2pt)
           (1,4) circle (2pt);

\end{tikzpicture} \\ \circlearrowleft \\ \end{array}$ & $\begin{array}{c} {\young(147\ten,258\eleven,369\twelve)} \\ \circlearrowleft \\ \end{array}$ & $\begin{array}{c} \tikzstyle{my help lines}=[gray,thick,dashed]
\begin{tikzpicture}[scale=.25]
 \draw (0,0) rectangle (4,1);
 \draw (0,1) rectangle (4,2);
 \draw (0,2) rectangle (4,3);
 \draw (0,3) rectangle (4,4);
 \draw[style=my help lines] (0,4) -- (4,0);
\end{tikzpicture} \\ \circlearrowleft \\ \end{array}$ & $\begin{array}{c} \emptyset \\ \circlearrowleft \\ \end{array}$ \\

\hline
\end{tabular}

\end{table}

\begin{table}
\caption{Baxter objects of order $(k,\ell)=(2,1)$}
\label{Table 2}
\centering
$\Theta_{k,\ell}(q)=1+q+2q^2+2q^3+2q^4+q^5+q^6$\\
$\Theta_{k,\ell}(1) =10$\\
$\Theta_{k,\ell}(-1)=2$\\
\begin{tabular}[t]{|c|c|c|c|c|c|c|}

 \hline
Baxter & Twisted Baxter  & Baxter & Baxter  & Diagonal  & Baxter Plane  \\
Permutations &  Permutations  & Paths & Tableaux &  Rectangulations &  Partitions  \\
\hline
\hline
 $\begin{array}{c} 1243\\ \updownarrow \\ 2134 \\ \end{array}$ & $\begin{array}{c} 1243\\ \updownarrow \\ 2134 \\ \end{array}$ & $\begin{array}{c} \begin{tikzpicture}[scale=.2]
 \draw[gray,very thin] (0,0) grid (5,5);
 \draw[very thick] (2,0) -- (4,0) -- (4,1);
 \draw[very thick] (1,1) -- (3,1) -- (3,2);
 \draw[very thick] (0,2) -- (2,2) -- (2,3);
 \filldraw (0,2) circle (2pt)
           (1,1) circle (2pt)
           (2,0) circle (2pt)
           (2,3) circle (2pt)
           (4,1) circle (2pt)
           (3,2) circle (2pt);

\end{tikzpicture} \\ \updownarrow \\ \begin{tikzpicture}[scale=.2]
 \draw[gray,very thin] (0,0) grid (5,5);
 \draw[very thick] (2,0) -- (2,1) -- (4,1);
 \draw[very thick] (1,1) -- (1,2) -- (3,2);
 \draw[very thick] (0,2) -- (0,3) -- (2,3);
 \filldraw (0,2) circle (2pt)
           (1,1) circle (2pt)
           (2,0) circle (2pt)
           (4,1) circle (2pt)
           (3,2) circle (2pt)
           (2,3) circle (2pt);

\end{tikzpicture} \\ \end{array}$ &  $\begin{array}{c} {\young(1369,257\eleven,48\ten\twelve)} \\ \updownarrow \\ {\young(1359,268\eleven,47\ten\twelve)} \\ \end{array}$ & $\begin{array}{c} \tikzstyle{my help lines}=[gray,thick,dashed]
\begin{tikzpicture}[scale=.25]
 \draw (0,0) rectangle (1,4);
 \draw (1,0) rectangle (2,4);
 \draw (2,0) rectangle (4,1);
 \draw (2,1) rectangle (4,4);
 \draw[style=my help lines] (0,4) -- (4,0);
\end{tikzpicture} \\ \updownarrow \\ \tikzstyle{my help lines}=[gray,thick,dashed]
\begin{tikzpicture}[scale=.25]
 \draw (0,0) rectangle (2,3);
 \draw (0,3) rectangle (2,4);
 \draw (2,0) rectangle (3,4);
 \draw (3,0) rectangle (4,4);
 \draw[style=my help lines] (0,4) -- (4,0);
\end{tikzpicture} \\ \end{array}$ & $\begin{array}{c} \begin{array}{cc} 3 & 3\\ \end{array} \\ \updownarrow \\ \begin{array}{cc} 0 & 0\\ \end{array} \\ \end{array}$ \\

\hline

$\begin{array}{c} 1342 \\ \updownarrow \\ 3124 \\ \end{array}$ & $\begin{array}{c} 1342 \\ \updownarrow \\ 3124 \\ \end{array}$ & $\begin{array}{c} \begin{tikzpicture}[scale=.2]
 \draw[gray,very thin] (0,0) grid (5,5);
 \draw[very thick] (2,0) -- (4,0) -- (4,1);
 \draw[very thick] (1,1) -- (3,1) -- (3,2);
 \draw[very thick] (0,2) -- (1,2) -- (1,3) -- (2,3);
 \filldraw (0,2) circle (2pt)
           (1,1) circle (2pt)
           (2,0) circle (2pt)
           (4,1) circle (2pt)
           (3,2) circle (2pt)
           (2,3) circle (2pt);

\end{tikzpicture} \\ \updownarrow \\ \begin{tikzpicture}[scale=.2]
 \draw[gray,very thin] (0,0) grid (5,5);
 \draw[very thick] (2,0) -- (3,0) -- (3,1) -- (4,1);
 \draw[very thick] (1,1) -- (1,2) -- (2,2) -- (3,2);
 \draw[very thick] (0,2) -- (0,3) -- (1,3) -- (2,3);
 \filldraw (0,2) circle (2pt)
           (1,1) circle (2pt)
           (2,0) circle (2pt)
           (4,1) circle (2pt)
           (3,2) circle (2pt)
           (2,3) circle (2pt);

\end{tikzpicture} \\ \end{array}$ &  $\begin{array}{c}{\young(1369,248\eleven,57\ten\twelve)} \\ \updownarrow \\ {\young(1368,259\eleven,47\ten\twelve)} \\ \end{array}$ & $\begin{array}{c} \tikzstyle{my help lines}=[gray,thick,dashed]
\begin{tikzpicture}[scale=.25]
 \draw (0,0) rectangle (1,4);
 \draw (1,0) rectangle (3,2);
 \draw (3,0) rectangle (4,2);
 \draw (1,2) rectangle (4,4);
 \draw[style=my help lines] (0,4) -- (4,0);
\end{tikzpicture} \\ \updownarrow \\ \tikzstyle{my help lines}=[gray,thick,dashed]
\begin{tikzpicture}[scale=.25]
 \draw (0,0) rectangle (3,2);
 \draw (0,2) rectangle (1,4);
 \draw (1,2) rectangle (3,4);
 \draw (3,0) rectangle (4,4);
 \draw[style=my help lines] (0,4) -- (4,0);
\end{tikzpicture} \\ \end{array}$ & $\begin{array}{c} \begin{array}{cc} 3 & 2\\ \end{array} \\ \updownarrow \\ \begin{array}{cc} 1 & 0\\ \end{array} \\ \end{array}$ \\

\hline

$\updownarrow$ & $\updownarrow$ & $\updownarrow$ & $\updownarrow$ & $\updownarrow$ & $\updownarrow$ \\

 $\begin{array}{c} 1423 \\ \updownarrow \\ 2314 \\ \end{array}$ & $\begin{array}{c} 1423 \\ \updownarrow \\ 2314 \\ \end{array}$ & $\begin{array}{c} \begin{tikzpicture}[scale=.2]
 \draw[gray,very thin] (0,0) grid (5,5);
 \draw[very thick] (2,0) -- (4,0) -- (4,1);
 \draw[very thick] (1,1) -- (2,1) -- (2,2) -- (3,2);
 \draw[very thick] (0,2) -- (1,2) -- (1,3) -- (2,3);
 \filldraw (0,2) circle (2pt)
           (1,1) circle (2pt)
           (2,0) circle (2pt)
           (4,1) circle (2pt)
           (3,2) circle (2pt)
           (2,3) circle (2pt);

\end{tikzpicture} \\ \updownarrow \\ \begin{tikzpicture}[scale=.2]
 \draw[gray,very thin] (0,0) grid (5,5);
 \draw[very thick] (2,0) -- (3,0) -- (3,1) -- (4,1);
 \draw[very thick] (1,1) -- (2,1) -- (2,2) -- (3,2);
 \draw[very thick] (0,2) -- (0,3) -- (1,3) -- (2,3);
 \filldraw (0,2) circle (2pt)
           (1,1) circle (2pt)
           (2,0) circle (2pt)
           (4,1) circle (2pt)
           (3,2) circle (2pt)
           (2,3) circle (2pt);

\end{tikzpicture} \\ \end{array}$ &  $\begin{array}{c} {\young(1357,269\eleven,48\ten\twelve)} \\ \updownarrow \\ {\young(1359,247\eleven,68\ten\twelve)} \\ \end{array}$ & $\begin{array}{c} \tikzstyle{my help lines}=[gray,thick,dashed]
\begin{tikzpicture}[scale=.25]
 \draw (0,0) rectangle (1,4);
 \draw (1,0) rectangle (4,1);
 \draw (1,1) rectangle (2,4);
 \draw (2,1) rectangle (4,4);
 \draw[style=my help lines] (0,4) -- (4,0);
\end{tikzpicture} \\ \updownarrow \\ \tikzstyle{my help lines}=[gray,thick,dashed]
\begin{tikzpicture}[scale=.25]
 \draw (0,0) rectangle (2,3);
 \draw (2,0) rectangle (3,3);
 \draw (0,3) rectangle (3,4);
 \draw (3,0) rectangle (4,4);
 \draw[style=my help lines] (0,4) -- (4,0);
\end{tikzpicture} \\ \end{array}$ & $\begin{array}{c} \begin{array}{cc} 3 & 1\\ \end{array} \\ \updownarrow \\ \begin{array}{cc} 2 & 0\\ \end{array} \\ \end{array}$ \\

\hline

 $\begin{array}{c} 2341 \\ \updownarrow \\ 4123 \\ \end{array}$ & $\begin{array}{c} 2341 \\ \updownarrow \\ 4123 \\ \end{array}$ & $\begin{array}{c} \begin{tikzpicture}[scale=.2]
 \draw[gray,very thin] (0,0) grid (5,5);
 \draw[very thick] (2,0) -- (3,0) -- (4,0) -- (4,1);
 \draw[very thick] (1,1) -- (2,1) -- (3,1) -- (3,2);
 \draw[very thick] (0,2) -- (0,3) -- (1,3) -- (2,3);
 \filldraw (0,2) circle (2pt)
           (1,1) circle (2pt)
           (2,0) circle (2pt)
           (4,1) circle (2pt)
           (3,2) circle (2pt)
           (2,3) circle (2pt);

\end{tikzpicture} \\ \updownarrow \\ \begin{tikzpicture}[scale=.2]
 \draw[gray,very thin] (0,0) grid (5,5);
 \draw[very thick] (2,0) -- (3,0) -- (4,0) -- (4,1);
 \draw[very thick] (1,1) -- (1,2) -- (2,2) -- (3,2);
 \draw[very thick] (0,2) -- (0,3) -- (1,3) -- (2,3);
 \filldraw (0,2) circle (2pt)
           (1,1) circle (2pt)
           (2,0) circle (2pt)
           (4,1) circle (2pt)
           (3,2) circle (2pt)
           (2,3) circle (2pt);

\end{tikzpicture} \\ \end{array}$ &  $\begin{array}{c} {\young(1469,258\eleven,37\ten\twelve)} \\ \updownarrow \\ {\young(136\ten,258\eleven,479\twelve)} \\ \end{array}$ & $\begin{array}{c} \tikzstyle{my help lines}=[gray,thick,dashed]
\begin{tikzpicture}[scale=.25]
 \draw (0,0) rectangle (2,3);
 \draw (0,3) rectangle (4,4);
 \draw (2,0) rectangle (3,3);
 \draw (3,0) rectangle (4,3);
 \draw[style=my help lines] (0,4) -- (4,0);
\end{tikzpicture} \\ \updownarrow \\ \tikzstyle{my help lines}=[gray,thick,dashed]
\begin{tikzpicture}[scale=.25]
 \draw (0,0) rectangle (4,1);
 \draw (0,1) rectangle (1,4);
 \draw (1,1) rectangle (2,4);
 \draw (2,1) rectangle (4,4);
 \draw[style=my help lines] (0,4) -- (4,0);
\end{tikzpicture} \\ \end{array}$ & $\begin{array}{c} \begin{array}{cc} 2 & 2\\ \end{array} \\ \updownarrow \\ \begin{array}{cc} 1 & 1\\ \end{array} \\ \end{array}$ \\

\hline

 $\begin{array}{c} 1324 \\ \circlearrowleft \\ \end{array}$ & $\begin{array}{c} 1324 \\ \circlearrowleft \\ \end{array}$ & $\begin{array}{c} \begin{tikzpicture}[scale=.2]
 \draw[gray,very thin] (0,0) grid (5,5);
 \draw[very thick] (2,0) -- (3,0) -- (3,1) -- (4,1);
 \draw[very thick] (1,1) -- (2,1) -- (2,2) -- (3,2);
 \draw[very thick] (0,2) -- (1,2) -- (1,3) -- (2,3);
 \filldraw (0,2) circle (2pt)
           (1,1) circle (2pt)
           (2,0) circle (2pt)
           (4,1) circle (2pt)
           (3,2) circle (2pt)
           (2,3) circle (2pt);

\end{tikzpicture} \\ \circlearrowleft \\ \end{array}$ &  $\begin{array}{c}{\young(1357,249\eleven,68\ten\twelve)} \\ \circlearrowleft \\ \end{array}$ & $\begin{array}{c} \tikzstyle{my help lines}=[gray,thick,dashed]
\begin{tikzpicture}[scale=.25]
 \draw (0,0) rectangle (1,4);
 \draw (1,0) rectangle (3,2);
 \draw (1,2) rectangle (3,4);
 \draw (3,0) rectangle (4,4);
 \draw[style=my help lines] (0,4) -- (4,0);
\end{tikzpicture} \\ \circlearrowleft \\ \end{array}$ & $\begin{array}{c} \begin{array}{cc} 3 & 0\\ \end{array} \\ \circlearrowleft \\ \end{array}$ \\

\hline

  $\begin{array}{c} 3412 \\ \circlearrowleft \\ \end{array}$ & $\begin{array}{c} 3142 \\ \circlearrowleft \\ \end{array}$ & $\begin{array}{c} \begin{tikzpicture}[scale=.2]
 \draw[gray,very thin] (0,0) grid (5,5);
 \draw[very thick] (2,0) -- (3,0) -- (4,0) -- (4,1);
 \draw[very thick] (1,1) -- (2,1) -- (2,2) -- (3,2);
 \draw[very thick] (0,2) -- (0,3) -- (1,3) -- (2,3);
 \filldraw (0,2) circle (2pt)
           (1,1) circle (2pt)
           (2,0) circle (2pt)
           (4,1) circle (2pt)
           (3,2) circle (2pt)
           (2,3) circle (2pt);

\end{tikzpicture} \\ \circlearrowleft \\ \end{array}$ &  $\begin{array}{c} {\young(1379,258\eleven,46\ten\twelve)} \\ \circlearrowleft \\ \end{array}$ & $\begin{array}{c} \tikzstyle{my help lines}=[gray,thick,dashed]
\begin{tikzpicture}[scale=.25]
 \draw (0,0) rectangle (3,2);
 \draw (0,2) rectangle (1,4);
 \draw (1,2) rectangle (4,4);
 \draw (3,0) rectangle (4,2);
 \draw[style=my help lines] (0,4) -- (4,0);
\end{tikzpicture} \\ \circlearrowleft \\ \end{array}$ & $\begin{array}{c} \begin{array}{cc} 2 & 1\\ \end{array} \\ \circlearrowleft \\ \end{array}$ \\

\hline

\end{tabular}

\end{table}

\begin{table}
\caption{Baxter objects of order $(k,\ell)=(1,2)$}
\label{Table 3}
\centering
$\Theta_{k,\ell}(q)=1+q+2q^2+2q^3+2q^4+q^5+q^6$\\
$\Theta_{k,\ell}(1) =10$\\
$\Theta_{k,\ell}(-1)=2$\\
\begin{tabular}[t]{|c|c|c|c|c|c|}

 \hline
Baxter & Twisted Baxter  & Baxter & Baxter  & Diagonal  & Baxter Plane  \\
Permutations &  Permutations  & Paths & Tableaux &  Rectangulations &  Partitions  \\
\hline
\hline

 $\begin{array}{c} 1432 \\ \updownarrow \\ 3214 \\ \end{array}$ & $\begin{array}{c} 1432 \\ \updownarrow \\ 3214 \\ \end{array}$ & $\begin{array}{c} \begin{tikzpicture}[scale=.2]
 \draw[gray,very thin] (0,0) grid (5,5);
 \draw[very thick] (2,0) -- (3,0) -- (3,1) -- (3,2);
 \draw[very thick] (1,1) -- (2,1) -- (2,2) -- (2,3);
 \draw[very thick] (0,2) -- (0,3) -- (0,4) -- (1,4);
 \filldraw (0,2) circle (2pt)
           (1,1) circle (2pt)
           (2,0) circle (2pt)
           (2,3) circle (2pt)
           (3,2) circle (2pt)
           (1,4) circle (2pt);

\end{tikzpicture} \\ \updownarrow \\ \begin{tikzpicture}[scale=.2]
 \draw[gray,very thin] (0,0) grid (5,5);
 \draw[very thick] (2,0) -- (3,0) -- (3,1) -- (3,2);
 \draw[very thick] (1,1) -- (1,2) -- (1,3) -- (2,3);
 \draw[very thick] (0,2) -- (0,3) -- (0,4) -- (1,4);
 \filldraw (0,2) circle (2pt)
           (1,1) circle (2pt)
           (2,0) circle (2pt)
           (2,3) circle (2pt)
           (3,2) circle (2pt)
           (1,4) circle (2pt);

\end{tikzpicture} \\ \end{array}$ &  $\begin{array}{c} {\young(1369,247\eleven,58\ten\twelve)} \\ \updownarrow \\ {\young(1358,269\eleven,47\ten\twelve)} \\ \end{array}$ & $\begin{array}{c} \tikzstyle{my help lines}=[gray,thick,dashed]
\begin{tikzpicture}[scale=.25]
 \draw (0,0) rectangle (1,4);
 \draw (1,0) rectangle (4,1);
 \draw (1,1) rectangle (4,2);
 \draw (1,2) rectangle (4,4);
 \draw[style=my help lines] (0,4) -- (4,0);
\end{tikzpicture} \\ \updownarrow \\ \tikzstyle{my help lines}=[gray,thick,dashed]
\begin{tikzpicture}[scale=.25]
 \draw (0,0) rectangle (3,2);
 \draw (0,2) rectangle (3,3);
 \draw (0,3) rectangle (3,4);
 \draw (3,0) rectangle (4,4);
 \draw[style=my help lines] (0,4) -- (4,0);
\end{tikzpicture} \\ \end{array}$ & $\begin{array}{c} 2 \\ 2\\ \updownarrow \\ 1 \\ 1 \\ \end{array}$ \\

\hline

 $\begin{array}{c} 2431 \\ \updownarrow \\ 4213 \\ \end{array}$ & $\begin{array}{c} 2431 \\ \updownarrow \\ 4213 \\ \end{array}$ & $\begin{array}{c} \begin{tikzpicture}[scale=.2]
 \draw[gray,very thin] (0,0) grid (5,5);
 \draw[very thick] (2,0) -- (3,0) -- (3,1) -- (3,2);
 \draw[very thick] (1,1) -- (2,1) -- (2,2) -- (2,3);
 \draw[very thick] (0,2) -- (0,3) -- (1,3) -- (1,4);
 \filldraw (0,2) circle (2pt)
           (1,1) circle (2pt)
           (2,0) circle (2pt)
           (2,3) circle (2pt)
           (3,2) circle (2pt)
           (1,4) circle (2pt);

\end{tikzpicture} \\ \updownarrow \\ \begin{tikzpicture}[scale=.2]
 \draw[gray,very thin] (0,0) grid (5,5);
 \draw[very thick] (2,0) -- (2,1) -- (3,1) -- (3,2);
 \draw[very thick] (1,1) -- (1,2) -- (1,3) -- (2,3);
 \draw[very thick] (0,2) -- (0,3) -- (0,4) -- (1,4);
 \filldraw (0,2) circle (2pt)
           (1,1) circle (2pt)
           (2,0) circle (2pt)
           (2,3) circle (2pt)
           (3,2) circle (2pt)
           (1,4) circle (2pt);

\end{tikzpicture} \\ \end{array}$ &  $\begin{array}{c} {\young(1469,257\eleven,38\ten\twelve)} \\ \updownarrow \\ {\young(135\ten,268\eleven,479\twelve)} \\ \end{array}$ & $\begin{array}{c} \tikzstyle{my help lines}=[gray,thick,dashed]
\begin{tikzpicture}[scale=.25]
 \draw (0,0) rectangle (2,3);
 \draw (0,3) rectangle (4,4);
 \draw (2,0) rectangle (4,1);
 \draw (2,1) rectangle (4,3);
 \draw[style=my help lines] (0,4) -- (4,0);
\end{tikzpicture} \\ \updownarrow \\ \tikzstyle{my help lines}=[gray,thick,dashed]
\begin{tikzpicture}[scale=.25]
 \draw (0,0) rectangle (4,1);
 \draw (0,1) rectangle (2,3);
 \draw (0,3) rectangle (2,4);
 \draw (2,1) rectangle (4,4);
 \draw[style=my help lines] (0,4) -- (4,0);
\end{tikzpicture} \\ \end{array}$ & $\begin{array}{c} 3 \\ 2\\ \updownarrow \\ 1 \\ 0 \\ \end{array}$ \\

\hline

 $\begin{array}{c} 3241 \\ \updownarrow \\ 4132 \\ \end{array}$ & $\begin{array}{c} 3241 \\ \updownarrow \\ 4132 \\ \end{array}$ & $\begin{array}{c} \begin{tikzpicture}[scale=.2]
 \draw[gray,very thin] (0,0) grid (5,5);
 \draw[very thick] (2,0) -- (3,0) -- (3,1) -- (3,2);
 \draw[very thick] (1,1) -- (1,2) -- (2,2) -- (2,3);
 \draw[very thick] (0,2) -- (0,3) -- (1,3) -- (1,4);
 \filldraw (0,2) circle (2pt)
           (1,1) circle (2pt)
           (2,0) circle (2pt)
           (2,3) circle (2pt)
           (3,2) circle (2pt)
           (1,4) circle (2pt);

\end{tikzpicture} \\ \updownarrow \\ \begin{tikzpicture}[scale=.2]
 \draw[gray,very thin] (0,0) grid (5,5);
 \draw[very thick] (2,0) -- (2,1) -- (3,1) -- (3,2);
 \draw[very thick] (1,1) -- (1,2) -- (2,2) -- (2,3);
 \draw[very thick] (0,2) -- (0,3) -- (0,4) -- (1,4);
 \filldraw (0,2) circle (2pt)
           (1,1) circle (2pt)
           (2,0) circle (2pt)
           (2,3) circle (2pt)
           (3,2) circle (2pt)
           (1,4) circle (2pt);

\end{tikzpicture} \\ \end{array}$ &  $\begin{array}{c} {\young(1468,259\eleven,37\ten\twelve)} \\ \updownarrow \\ {\young(136\ten,248\eleven,579\twelve)} \\ \end{array}$ & $\begin{array}{c} \tikzstyle{my help lines}=[gray,thick,dashed]
\begin{tikzpicture}[scale=.25]
 \draw (0,0) rectangle (3,2);
 \draw (0,2) rectangle (3,3);
 \draw (0,3) rectangle (4,4);
 \draw (3,0) rectangle (4,3);
 \draw[style=my help lines] (0,4) -- (4,0);
\end{tikzpicture} \\ \updownarrow \\ \tikzstyle{my help lines}=[gray,thick,dashed]
\begin{tikzpicture}[scale=.25]
 \draw (0,0) rectangle (4,1);
 \draw (0,1) rectangle (1,4);
 \draw (1,1) rectangle (4,2);
 \draw (1,2) rectangle (4,4);
 \draw[style=my help lines] (0,4) -- (4,0);
\end{tikzpicture} \\ \end{array}$ & $\begin{array}{c} 3 \\ 1\\ \updownarrow \\ 2 \\ 0 \\ \end{array}$ \\

\hline

 $\begin{array}{c} 3421 \\ \updownarrow \\ 4312 \\ \end{array}$ & $\begin{array}{c} 3421 \\ \updownarrow \\ 4312 \\ \end{array}$ & $\begin{array}{c} \begin{tikzpicture}[scale=.2]
 \draw[gray,very thin] (0,0) grid (5,5);
 \draw[very thick] (2,0) -- (3,0) -- (3,1) -- (3,2);
 \draw[very thick] (1,1) -- (2,1) -- (2,2) -- (2,3);
 \draw[very thick] (0,2) -- (1,2) -- (1,3) -- (1,4);
 \filldraw (0,2) circle (2pt)
           (1,1) circle (2pt)
           (2,0) circle (2pt)
           (2,3) circle (2pt)
           (3,2) circle (2pt)
           (1,4) circle (2pt);

\end{tikzpicture} \\ \updownarrow \\ \begin{tikzpicture}[scale=.2]
 \draw[gray,very thin] (0,0) grid (5,5);
 \draw[very thick] (2,0) -- (2,1) -- (2,2) -- (3,2);
 \draw[very thick] (1,1) -- (1,2) -- (1,3) -- (2,3);
 \draw[very thick] (0,2) -- (0,3) -- (0,4) -- (1,4);
 \filldraw (0,2) circle (2pt)
           (1,1) circle (2pt)
           (2,0) circle (2pt)
           (2,3) circle (2pt)
           (3,2) circle (2pt)
           (1,4) circle (2pt);

\end{tikzpicture} \\ \end{array}$ &  $\begin{array}{c} {\young(1479,258\eleven,36\ten\twelve)} \\ \updownarrow \\ {\young(137\ten,258\eleven,469\twelve)} \\ \end{array}$ & $\begin{array}{c} \tikzstyle{my help lines}=[gray,thick,dashed]
\begin{tikzpicture}[scale=.25]
 \draw (0,0) rectangle (3,2);
 \draw (0,2) rectangle (4,3);
 \draw (0,3) rectangle (4,4);
 \draw (3,0) rectangle (4,2);
 \draw[style=my help lines] (0,4) -- (4,0);
\end{tikzpicture} \\ \updownarrow \\ \tikzstyle{my help lines}=[gray,thick,dashed]
\begin{tikzpicture}[scale=.25]
 \draw (0,0) rectangle (4,1);
 \draw (0,1) rectangle (4,2);
 \draw (0,2) rectangle (1,4);
 \draw (1,2) rectangle (4,4);
 \draw[style=my help lines] (0,4) -- (4,0);
\end{tikzpicture} \\ \end{array}$ & $\begin{array}{c} 3 \\ 3\\ \updownarrow \\ 0 \\ 0 \\ \end{array}$ \\

\hline

 $\begin{array}{c} 2143 \\ \circlearrowleft \\ \end{array}$ & $\begin{array}{c} 2143 \\ \circlearrowleft \\ \end{array}$ & $\begin{array}{c} \begin{tikzpicture}[scale=.2]
 \draw[gray,very thin] (0,0) grid (5,5);
 \draw[very thick] (2,0) -- (3,0) -- (3,1) -- (3,2);
 \draw[very thick] (1,1) -- (1,2) -- (2,2) -- (2,3);
 \draw[very thick] (0,2) -- (0,3) -- (0,4) -- (1,4);
 \filldraw (0,2) circle (2pt)
           (1,1) circle (2pt)
           (2,0) circle (2pt)
           (2,3) circle (2pt)
           (3,2) circle (2pt)
           (1,4) circle (2pt);

\end{tikzpicture} \\ \circlearrowleft \\ \end{array}$ &  $\begin{array}{c} {\young(1368,249\eleven,57\ten\twelve)} \\ \circlearrowleft \\ \end{array}$ & $\begin{array}{c} \tikzstyle{my help lines}=[gray,thick,dashed]
\begin{tikzpicture}[scale=.25]
 \draw (0,0) rectangle (2,3);
 \draw (0,3) rectangle (2,4);
 \draw (2,0) rectangle (4,1);
 \draw (2,1) rectangle (4,4);
 \draw[style=my help lines] (0,4) -- (4,0);
\end{tikzpicture} \\ \circlearrowleft \\ \end{array}$ & $\begin{array}{c} 2 \\ 1\\ \circlearrowleft \\ \end{array}$ \\

\hline

 $\begin{array}{c} 4231 \\ \circlearrowleft \\ \end{array}$ & $\begin{array}{c} 4231 \\ \circlearrowleft \\ \end{array}$ & $\begin{array}{c} \begin{tikzpicture}[scale=.2]
 \draw[gray,very thin] (0,0) grid (5,5);
 \draw[very thick] (2,0) -- (2,1) -- (3,1) -- (3,2);
 \draw[very thick] (1,1) -- (1,2) -- (2,2) -- (2,3);
 \draw[very thick] (0,2) -- (0,3) -- (1,3) -- (1,4);
 \filldraw (0,2) circle (2pt)
           (1,1) circle (2pt)
           (2,0) circle (2pt)
           (2,3) circle (2pt)
           (3,2) circle (2pt)
           (1,4) circle (2pt);

\end{tikzpicture} \\ \circlearrowleft \\ \end{array}$ &  $\begin{array}{c} {\young(146\ten,258\eleven,379\twelve)} \\ \circlearrowleft \\ \end{array}$ & $\begin{array}{c} \tikzstyle{my help lines}=[gray,thick,dashed]
\begin{tikzpicture}[scale=.25]
 \draw (0,0) rectangle (4,1);
 \draw (0,1) rectangle (2,3);
 \draw (2,1) rectangle (4,3);
 \draw (0,3) rectangle (4,4);
 \draw[style=my help lines] (0,4) -- (4,0);
\end{tikzpicture} \\ \circlearrowleft \\ \end{array}$ & $\begin{array}{c} 3 \\ 0\\ \circlearrowleft \\ \end{array}$ \\

\hline

\end{tabular}

\end{table}

\end{document}